\newcommand{\dtref}[2]{#1}
\definecolor{coa}{HTML}{77aadd}
\definecolor{cob}{HTML}{99DDFF}
\definecolor{coc}{HTML}{ee8866}
\definecolor{cod}{HTML}{FFAABB}
\definecolor{coe}{HTML}{bbcc33}
\definecolor{cof}{HTML}{44bb99}
\definecolor{cog}{HTML}{eedd88}
\definecolor{coh}{HTML}{DDDDDD}
\subjclass[2010]{55N15,	55P42, 18D20,16E40}
\keywords{$K$-theory,  Waldhausen categories, Dennis trace}
\begin{document}

\title{Spectral Waldhausen categories, the $S_\bullet$-construction, and the Dennis trace}

\author[J. A. Campbell]{Jonathan A. Campbell}
\email{jonalfcam@gmail.com }
\author[J. A. Lind]{John A. Lind}
\email{jlind@csuchico.edu}
\address{Department of Mathematics and Statistics, California State University, Chico, CA USA}
\author[C. Malkiewich]{Cary Malkiewich}
\email{malkiewich@math.binghamton.edu}
\address{Department of Mathematical Sciences, Binghamton University, PO Box 6000, Binghamton, NY 13902}
\author[K. Ponto]{Kate Ponto}
\email{kate.ponto@uky.edu}
\address{Department of Mathematics, University of Kentucky, 719 Patterson Office Tower, Lexington, KY USA}
\author[I. Zakharevich]{Inna Zakharevich}
\email{zakh@math.cornell.edu}
\address{587 Malott, Ithaca, NY 14853}
\maketitle

\begin{abstract}We give an explicit point-set construction of the Dennis trace map from the $K$-theory of endomorphisms $K\End(\mathcal{C})$ to topological Hochschild homology $\mathrm{THH}(\mathcal{C})$ for any spectral Waldhausen category $\mathcal{C}$.  We describe the necessary technical foundations, most notably a well-behaved model for the spectral category of diagrams in $\mathcal{C}$ indexed by an ordinary category via the Moore end.  This is applied to define a version of Waldhausen's $S_{\bullet}$-construction for spectral Waldhausen categories, which is central to this account of the Dennis trace map.

Our goals are both convenience and transparency---we provide all details except for a proof of the additivity theorem for $\mathrm{THH}$, which is taken for granted---and the exposition is concerned not with originality of ideas, but rather aims to provide a useful resource for learning about the Dennis trace and its underlying machinery.
\end{abstract}

\setcounter{tocdepth}{1}
\tableofcontents

\section{Introduction}

This is an account of foundational material on the Dennis trace 
--- specifically, the explicit point-set model of the Dennis trace from \cite{blumberg_mandell_unpublished,dundas_goodwillie_mccarthy}. While working on a recent project \cite{clmpz-dt}, the authors found it useful to compile the details of several of these foundational results, in the particular setting of Waldhausen categories enriched in orthogonal spectra. This paper is the result of those efforts, and we are sharing it in the hope that it will be useful for others.  It is also written with the goal of serving as an entry point 
into the area of trace methods in $K$-theory, so we have included a historical overview in \S\ref{sec:history} that surveys the literature.  Since that section provides the background and motivation, we begin here with a quick summary of the paper itself.

A spectral Waldhausen category is a category enriched in orthogonal spectra, along with an associated underlying ordinary category $\cC_{0}$ that has a compatible Waldhausen structure (\cref{def:spectrally_enriched_waldhausen_category}).
In this paper we develop the necessary background to define the Dennis trace map
\begin{equation}\label{eq:intro_dennis_trace}
\trc \colon K(\uncat{\cC}) \arr \THH(\cC)
\end{equation}
for these spectral Waldhausen categories.  When $\mc{C} = \tensor[^A]{\Perf}{}$ is the spectral category of perfect modules over a ring spectrum $A$, our definition agrees with previous definitions of the Dennis trace map $K(A) \arr \THH(A)$ for $A$.

Our construction of the Dennis trace map relies on a version of Waldhausen's $S_{\bullet}$-construction for spectral Waldhausen categories.  At each simplicial level, the result $S_{\bullet}\cC$ is again a spectral Waldhausen category, as is the iterated $S_{\bullet}$-construction $S_{\bullet}^{(n)}\cC \coloneqq S_{\bullet} \dotsm S_{\bullet} \cC$, and so we can define $\THH(S_{\bullet}^{(n)}\cC)$. The inclusion of endomorphisms into the cyclic bar construction
\[
\bigvee_{\substack{f \colon c \to c \\ \text{in $\End(\uncat{\cC})$} }} \bbS \arr \THH(\cC)
\]
can then be applied to the iterated $S_{\bullet}$-construction, giving a map of multisimplicial spectra
\[
\Sigma^{\infty} \ob \End(w_{\bullet}S_{\bullet}^{(n)}\cC_{0}) \arr \THH(w_{\bullet}S_{\bullet}^{(n)}\cC).
\]
Taking the geometric realization of this map and applying the additivity theorem for $\THH$ yields the Dennis trace map
\[
K(\End(\cC_{0})) \arr \THH(\cC)
\]
out of endomorphism $K$-theory. Precomposing with the inclusion of identity endomorphisms defines the Dennis trace map on $K(\uncat{\cC})$ as in \eqref{eq:intro_dennis_trace}.

The bulk of the paper is concerned with defining $S_{\bullet}\cC$ for a spectral Waldhausen category $\cC$.  To this end, we construct a spectral category $\Fun(I, \mc{C})$ of diagrams indexed by an ordinary category $I$ with values in a spectral category $\mc{C}$ equipped with a base category $\uncat{\mc{C}}$.  The objects of $\Fun(I, \mc{C})$ are functors of ordinary categories $I \arr \uncat{\mc{C}}$. Following McClure-Smith \cite{mcclure_smith} and Blumberg-Mandell \cite{blumberg_mandell_unpublished}, the mapping spectrum between two diagrams is built out of the Moore end. The properties of the construction are summarized in \cref{thm:moore_end}; the essential points are that $\Fun(I, \mc{C})$ is functorial in both variables, the spectral category $\Fun(\ast, \mc{C})$ has the same homotopy type as $\mc{C}$, and when $I = [k] = \{0 \to \dotsm \to k\}$ is a poset category, then the mapping spectrum $\Fun(I, \mc{C})(\phi, \gamma)$ between two diagrams $\phi, \gamma \colon I \arr \uncat{\mc{C}}$ is equivalent to the homotopy limit of a zig-zag of the following form.
\[ \xymatrix @R=1em @!C=4em {
	\cC(\phi_0,\gamma_0) \ar[rd] && \ar[ld] \cC(\phi_1,\gamma_1) \ar[rd] & \cdots & \ar[ld] \cC(\phi_k,\gamma_k) \\
	& \cC(\phi_0,\gamma_1) && \cdots &
	}
\]

Diagrams in $\cC$ indexed by $I = [k] \times [k]$ are $k \times k$ commuting grids of morphisms in $\cC_{0}$.  Restricting to those diagrams that encode a sequence
\[
\ast \arr a_1 \arr \dotsm \arr a_k
\]
of cofibrations and their quotients gives the spectral category $S_{k}\cC$.

The other technical input for the Dennis trace map is the additivity theorem for $\THH$.  This is the only aspect of the construction that we do not develop from scratch, instead referring to the proofs in the literature \cite{dundas_mccarthy,dundas_goodwillie_mccarthy,blumberg_mandell_published,blumberg_mandell_unpublished}. However, see the companion paper \cite{clmpz-dt} for a succinct proof of additivity in the context of spectral Waldhausen categories, using trace methods in bicategories.

We also do not spend a significant amount of time on the lift to topological restriction homology ($\TR$) and topological cyclic homology ($\TC$). The lift to $\TR$ is treated in detail in the companion paper \cite{clmpz-dt}. The lift to $\TC$ is done in a similar way, only one works with cyclotomic spectra in the sense of \cite{madsen_survey,blumberg_mandell_cyclotomic} instead of restriction systems in the sense of \cite{clmpz-dt}. See e.g. \cite{madsen_survey,blumberg_mandell_unpublished} for more details. Note that the trace to $\TR$ is defined on all of $K(\End(\uncat{\cC}))$ but the trace to $\TC$ is only defined on $K(\uncat{\cC})$.

\subsection{Organization}

To help readers who are new to the area and describe the context of the paper, \S\ref{sec:history} discusses the history of the Dennis trace map, summarizes some of its applications in $K$-theory, and provides a guide to the literature.  This section is independent of the rest of the paper. We review background material on spectral categories in \S\ref{sec:spectral_cats}, including a careful treatment of the central example of the spectral category $\tensor[^A]{\Mod}{}$ of module spectra over a ring spectrum $A$.  The most technically demanding portion of the paper is \S\ref{ex:fun_cat}, where we define the spectral category $\Fun(I, \cC)$ of $I$-diagrams in $\cC$ and establish its relevant properties.  The $S_{\bullet}$-construction for spectral Waldhausen categories is discussed in \S\ref{sec:SWC_and_Sdot}.  In \S\ref{sec:properness}, we analyze a general method for constructing symmetric spectra from multisimplicial objects, as in the definition of Waldhausen's $K$-theory
\begin{equation}\label{eq:ktheory_intro}
	K(\uncat\cC)_{n} = \bigl| \ob w_{\bullet} S^{(n)}_{\bullet} \uncat{\cC} \bigr|,
\end{equation}
and explain how to take the left derived functor of this process, so that it is homotopy invariant.  These foundations are put to work in \S\ref{sec:dennis_trace}, where we define the Dennis trace map.  \cref{sec:model_structures} discusses model category structures on symmetric-orthogonal bispectra, and explains how to move between bispectra and other models for the stable homotopy category.

\subsection{Acknowledgments}

JC and CM would like to thank Andrew Blumberg, Mike Mandell, and Randy McCarthy for helpful conversations about this paper, and for general wisdom about trace methods.
KP was partially supported by NSF grant DMS-1810779 and the University of Kentucky Royster Research Professorship.
The authors thank Cornell University for hosting the initial meeting which led to this work.

\section{Historical overview}\label{sec:history}{}
 Quillen defined the higher algebraic $K$-groups as the homotopy groups
\[
K_n(A) = \pi_n (K_0(A) \times BGL(A)^{+})
\]
of a space built out of the plus construction of the classifying space of the infinite general linear group $GL(A) = \colim_k GL_k(A)$ \cite{quillen,quillen_plus}.  Quillen's definition sparked a revolution in the conceptual understanding of algebraic $K$-theory, but concrete calculations, beyond his computation of the $K$-theory of finite fields \cite{quillen:finite}, remain quite difficult.

The Dennis trace map has been one of
the most fruitful tools for computations of higher algebraic $K$-groups.
It was developed to approximate algebraic $K$-theory by invariants that are easier to compute.

\subsection{Hochschild homology}
The first construction of the Dennis trace occurs in unpublished work of Keith Dennis from the late 1970s. It is a homomorphism of graded abelian groups
\begin{equation}\label{eq:og_dt}
K_*(A) \arr \HH_*(A)
\end{equation}
from the algebraic $K$-theory groups to the Hochschild homology groups of a ring $A$.

The {\bf Hochschild homology groups} are defined as the homology groups of the
\emph{cyclic bar construction}, $B^{\cy}A$.  This is a simplicial abelian group
with $B^{\cy}_q \coloneqq A^{\otimes q}$, with face maps
$d_i \colon B^{\cy}_{q}A = A^{\otimes (q + 1)} \arr A^{\otimes q} = B^{\cy}_{q -
  1}A$ defined by
\begin{equation}\label{eq:hoch}
d_{i}(a_0 \otimes \dotsm \otimes a_{q}) = \begin{cases}
a_0 \otimes \dotsm \otimes a_i a_{i + 1} \otimes \dotsm \otimes a_{q} \qquad &\text{for $0 \leq i < q$} \\
a_{q}a_{0} \otimes \dotsm \otimes a_{q - 1} \qquad &\text{for $i = q$.}
\end{cases}
\end{equation}
In the case of a group ring $\bbZ[G]$, there is a canonical homomorphism from group homology to Hochschild homology
\[
H_*(BG; \bbZ) \arr \HH_*(\bbZ[G])
\]
defined on the bar construction 
by
\begin{align*}
\bbZ\{B_{q}G\} = \bbZ\{G^{q}\} &\arr \bbZ[G]^{\otimes (q + 1)} \\
(g_1, \dotsc, g_q) &\longmapsto (g_{q}^{-1}\dotsm g_{1}^{-1}) \otimes g_1 \otimes \dotsm \otimes g_q.
\end{align*}
In particular, for $G = GL_n(A)$, we can compose with the canonical ring homomorphism $\bbZ[GL_n(A)] \arr M_n(A)$ and the multitrace
\begin{align*}
\tr \colon M_{n}(A)^{\otimes (q + 1)} &\arr A^{\otimes (q + 1)} \\
g_0 \otimes \dotsm \otimes g_q &\longmapsto \sum_{i_0, \dotsc, i_{q} = 1}^{n} (g_0)_{i_0i_1} \otimes \dotsm \otimes (g_q)_{i_qi_0}
\end{align*}
to get a composite
\begin{equation}\label{eq:og_dt_formula}
H_*(BGL_n(A); \bbZ) \arr \HH_*(\bbZ[GL_n(A)]) \arr \HH_*(M_n(A)) \oarr{\mathrm{tr}} \HH_*(A).
\end{equation}
Taking the colimit as $n \rightarrow \infty$ and then precomposing with the Hurewicz map
\[
K_*(A) = \pi_*(BGL(A)^{+}) \arr H_*(BGL(A)^{+}; \bbZ) \cong  H_*(BGL(A); \bbZ)
\]
gives the original construction of the Dennis trace map \eqref{eq:og_dt} in positive degrees.

\subsection{Topological Hochschild homology}
Inspired by Goodwillie's ideas and Waldhausen's ``brave new algebra'' of ring spectra as arithmetic objects, B\"okstedt defined a lift of the Dennis trace of the form
\begin{equation}\label{eq:bok_dt}
\trc \colon K(A) \arr \THH(A).
\end{equation}
Now each of these terms is a spectrum, rather than a sequence of abelian groups, and $\THH$ is the {\bf topological Hochschild homology} of the ring or ring spectrum $A$. Essentially, $\THH$ is defined the same way as $\HH$ but with smash products $\sma_{\bbS}$ over the sphere spectrum instead of the tensor products $\otimes_{\bbZ}$. More precisely, for each ring spectrum $A$ the spectrum $\THH(A)$ is defined to be the realization of the cyclic bar construction
\[
[q] \longmapsto B^{\mathrm{cy}}_{q}(A) = A^{\sma (q + 1)}, \; \; \text{with face maps as in \eqref{eq:hoch}.}
\]
B\"okstedt's original definition is equivalent to this but defines $A^{\sma (q+1)}$ in a more elaborate way using a homotopy colimit \cite{bokstedt_thh,bokstedt_thh_Z_Z/p}.

When $A$ is an ordinary ring this refined Dennis trace can be constructed by a direct generalization of the formula \eqref{eq:og_dt_formula}. For ring spectra it is often more convenient to use the ``inclusion of endomorphisms" description from the introduction. See  \cite{blumberg_mandell_unpublished,blumberg_mandell_published} for further discussion and \cite{dundas_goodwillie_mccarthy} for a comparison of these two approaches. The latter approach has the advantage that it generalizes to spectral categories, and therefore models the Dennis trace for stable $\infty$-categories \cite{bgt}.

The trace to $\THH$ already gives more information than Dennis's original construction.  For example, when $A = \bbZ$ the Dennis trace map to $\THH$ is surjective on homotopy groups \cite{rognes_trace}. Using B\"okstedt's calculation
\[
\pi_{n}\THH(\bbZ) = \begin{cases}
\bbZ \quad & \text{if $n = 0$} \\
0 \quad & \text{if $n = 2k > 0$} \\
\bbZ/k \quad & \text{if $n = 2k - 1$,}
\end{cases}
\]
this implies that $K_n(\bbZ)$ is nontrivial for $n$ odd. On the other hand, the
ordinary Hochschild homology of the integers is concentrated in degree zero, so
cannot detect $K$-theory classes in positive degree.

\subsection{Further refinements}
At the Hochschild homology level, Connes showed that the natural cyclic permutations on the Hochschild complex come from an action of the circle group $S^1$; taking homology with respect to the circle action defines a new theory called cyclic homology, which is a non-commutative version of de Rham cohomology \cite{connes_noncommutative_IHES,loday_quillen,tsygan,HoKoRo}. {\bf Cyclic homology} $\HC(A)$ can be identified with the homotopy orbits of the $S^1$-action on the Hochschild complex, whereas the homotopy fixed points define a variant called {\bf negative cyclic homology} $\HN(A)$ \cite{hoyois_circle,kassel_cyclic,jones_cyclic}:
\[
\HC(A) \cong (B^{\cy}A)_{hS^1} \qquad \HN(A) \cong (B^{\cy}A)^{hS^1}.
\]

The Dennis trace can be lifted to a map from $K$-theory to negative cyclic homology,
\begin{equation}\label{eq:neg_dt}
K_*(A) \arr \HN_*(A).
\end{equation}
It is helpful to think of this map as a generalization of the Chern character.

At the $\THH$ level, the analogous notion is {topological cyclic homology} (TC), originally worked out by B\"okstedt-Hsiang-Madsen \cite{bokstedt_hsiang_madsen,madsen_survey}. It is not the homotopy $S^1$-fixed points of $\THH$, but instead the homotopy limit of the fixed points $\THH^{C_n}$ under certain actions by cyclic groups $C_n$. The Dennis trace to $\THH$ can be shown to factor through these fixed points, giving the {\bf cyclotomic trace}
\[
\trc \colon K(A) \arr \TC(A).
\]
By its construction, there is a forgetful map $\TC \to \THH$ along which this cyclotomic trace becomes the Dennis trace.

The reason for this divergence between the definitions of $\HN$ and of $\TC$ is purely computational. Phrasing the results in modern language, Goodwillie showed that if $A \to B$ is a map of connective ring spectra such that $\pi_0A \to \pi_0B$ has nilpotent kernel, then the Dennis trace induces a homotopy cartesian square after rationalization \cite{goodwillie_relative_ktheory}
\begin{equation}\label{eq:goodwillie_thm}
\begin{tikzcd}
K(A)_{\bbQ} \ar[d] \ar[r, "\trc"] & \HN(A_{\bbQ}) \ar[d] \\
K(B)_{\bbQ} \ar[r, "\trc"] & \HN(B_{\bbQ}).
\end{tikzcd}
\end{equation}
The integral version of this theorem is due to Dundas-Goodwillie-McCarthy \cite{dundas_relative,mccarthy_relative,dundas_goodwillie_mccarthy}: under the same hypotheses, the square induced by the cyclotomic trace
\begin{equation}\label{eq:dundas_mccarthy}
\begin{tikzcd}
K(A) \ar[d] \ar[r, "\trc"] & \TC(A) \ar[d] \\
K(B) \ar[r, "\trc"] & \TC(B)
\end{tikzcd}
\end{equation}
is homotopy cartesian. These results are important for computations since they allow us to transfer computations from one ring to all other ``nearby'' rings. See also \cite{beilinson,CMM_henselian} for more recent results of Dundas-Goodwillie-McCarthy type.

It's worth noting that while the original treatment of cyclotomic spectra and the cyclotomic trace required deep technical use of equivariant stable homotopy theory and the particular properties of B\"okstedt's model for $\THH$ (see e.g. \cite[Rmk. 2.5.9]{madsen_survey}), in recent years alternative foundations have become available.  For example, the Hill-Hopkins-Ravenel norm allows a more direct use of the cyclic bar construction in building the cyclotomic structure on $\THH$ \cite{ABGHLM_TC,dmpsw}.  Nikolaus and Scholze have also given a reformulation of a cyclotomic spectrum as an $S^1$-spectrum $X$ equipped with $S^1$-equivariant maps $X \arr X^{tC_p}$ from $X$ to the Tate construction (the homotopy cofiber of the norm map $X_{hC_p} \to X^{hC_p}$) \cite{nikolaus_scholze,hesselholt_nikolaus_TC}.  Their new definition works at the level of the underlying $\infty$-categories and provides a model-independent construction of the cyclotomic trace.  Other recent work includes an algebreo-geometric interpretation of the cyclotomic trace \cite{ayala_mazel-gee_rozenblyum_trace}.

\subsection{A few classical applications}
The cyclotomic trace map is the source of a great deal of computational knowledge about algebraic $K$-theory, in no small part because of the Dundas-Goodwillie-McCarthy theorem \eqref{eq:dundas_mccarthy}.

The original paper on the cyclotomic trace \cite{bokstedt_hsiang_madsen} describes the $\TC$ of spherical groups rings by constructing a homotopy cartesian square
\[
\begin{tikzcd}
\TC(\bbS[\Omega X])^{\wedge}_{p} \ar[r] \ar[d] & \Sigma\Sigma^{\infty}_+ ((\cL X)_{hS^1})^{\wedge}_{p} \ar[d, "\text{Trf}"] \\
\Sigma^{\infty}_{+} \cL X^{\wedge}_{p} \ar[r, "\id - \Delta_p"] & \Sigma^{\infty}_{+} \cL X^{\wedge}_{p}
\end{tikzcd}
\]
where $\Delta_p$ is the action of the $p$-th power map $z \mapsto z^p$ of the circle on the free loop space $\cL X$, and the right vertical map is the $S^1$-transfer map.  In particular, the topological cyclic homology of the sphere spectrum splits as
\[
\TC(\bbS)^{\wedge}_{p} \simeq \bbS^{\wedge}_{p} \vee \Sigma (\bbC{}P^{\infty}_{-1})^{\wedge}_{p},
\]
where the Thom spectrum $\bbC{}P^{\infty}_{-1}$ may be identified with the homotopy fiber of the transfer map $\Sigma \Sigma^{\infty}_{+} \bbC{}P^{\infty} \to \bbS$, and features prominently in the solution of Mumford's conjecture on the stable cohomology of mapping class groups \cite{madsen_weiss}.

Using Waldhausen's splitting $K(\bbS[\Omega X]) \simeq \Sigma^{\infty}_{+}X \vee \mathrm{Wh}(X)$ into the stable homotopy type of $X$ and the Whitehead spectrum of $X$, B\"okstedt-Hsiang-Madsen used the cyclotomic trace to nearly describe $\mathrm{Wh}(\ast)$ at odd regular primes $p$, a task later completed by Rognes \cite{rognes_whitehead}, and proved the $K$-theoretic analog of the Novikov conjecture: the assembly map $K(\bbZ) \sma G_+ \arr K(\bbZ[G])$ is a rational equivalence  for a large class of groups.  Along with the equivalence of relative theories provided by the homotopy cartesian square \eqref{eq:dundas_mccarthy} for the augmentation map $\bbS[\Omega X] \arr \bbS$ in the case of a simply connected space $X$ \cite{BCCGHM}, these results have concrete geometric applications in pseudoisotopy theory.  Recent work of Blumberg-Mandell \cite{blumberg_mandell_homotopy_K(S)} expands on these results to give a $p$-local splitting of the algebraic $K$-groups $K_*(\bbS)$ into the homotopy groups of known entities at all odd primes $p$.

The cyclotomic trace has also been used to compute the algebraic $K$-theory of discrete rings.  Hesselholt and Madsen showed that \cite{hesselholt_madsen}
\[
\TC(\bbF_p; p) \simeq H\bbZ_{p} \vee \Sigma^{-1} H\bbZ_{p},
\]
which agrees with Quillen's computation of $K(\bbF_p)^{\wedge}_{p}$ in non-negative degrees.  This agreement holds more generally for any perfect field $k$ of positive characteristic, and it then follows from the Dundas-Goodwillie-McCarthy theorem \eqref{eq:dundas_mccarthy} that for $A$ a finitely generated algebra over the $p$-typical Witt vectors $W(k)$, the cyclotomic trace induces an equivalence
\[
\trc \colon K(A)^{\wedge}_{p} \oarr{\sim} \tau_{\geq 0} \TC(A)^{\wedge}_{p}
\]
from $p$-complete $K$-theory to the connective cover of the $p$-completion of $\TC$.  This comparison result was used by Hesselholt-Madsen to compute the $K$-theory of complete discrete valuation fields of characteristic zero with perfect residue field $k$, thereby verifying the Quillen-Lichtenbaum conjecture in these cases \cite{hesselholt_madsen_Klocalfields}.  For example, if $F$ is a finite extension of $\bbQ_{p}$, then there is an isomorphism
\[
K_n(F; \bbZ/m) \cong K_n(k; \bbZ/m)\oplus K_{n - 1}(k; \bbZ/m)
\]
of $K$-theory groups with mod $m$ coefficients when $m$ is prime to $p$.

Trace methods have effectively computed $K$-theory for a variety of other discrete rings \cite{hesselholt_madsen_truncated,hesselholt_polytopes,hesselholt_larsen_lindenstrauss,speirs} and ring spectra, such as the connective complex $K$-theory spectrum $ku$ and its variants \cite{ausoni_rognes_Kku,ausoni_rognes_Kmorava,ausoni_rognes_rational}.

\section{Spectral categories}\label{sec:spectral_cats}

We now begin the foundational material on the Dennis trace. We discuss our conventions for spectral categories, including the important example of the spectral category of modules over a ring spectrum, and define $\THH$ of spectral categories.

\subsection{Basic definitions}
\begin{defn}
  A {\bf spectral category} $\mc C$ is a category enriched in the symmetric monoidal category of orthogonal
  spectra \cite{mandell_may_shipley_schwede}.  In more detail, this means that for every ordered pair of objects $(a, b)$ of $\mc C$, there is a spectrum $\mc C(a, b)$, which is thought of as the spectrum of maps from $a$ to $b$, as well as a unit map $\bbS \to \mc C(a, a)$ from the sphere spectrum for every object $a$, and composition maps
  \[ \mc{C}(a,b) \sma \mc{C}(b,c) \arr \mc{C}(a,c) \] that are strictly
  associative and unital.  A spectral category $\cC$ is {\bf pointwise cofibrant} if
  every mapping spectrum $\cC(a,b)$ is cofibrant in the stable model structure on orthogonal spectra \cite[\S 9]{mandell_may_shipley_schwede}.

  A {\bf functor of spectral categories} $F\colon \mc C \arr \mc D$ consists of
  a function from the objects of $\mc C$ to the objects of $\mc D$ and maps of spectra
  $F\colon \mc C(a,b) \arr \mc D(Fa,Fb)$ that respect composition and the
  unit maps. We call $F$ a {\bf Dwyer--Kan embedding} if each of these
  maps of spectra is an equivalence. As a special case, if $F$ is the identity on objects and an equivalence on each mapping spectrum we call it a {\bf pointwise equivalence}.

  Throughout, we assume that spectral categories are small, meaning that they
  have a set of objects.
\end{defn}

\begin{rmk}
  Our convention that $\mc C(a,b)$ is an orthogonal spectrum imposes no essential restriction. Any category enriched in symmetric or EKMM spectra can be turned into an orthogonal spectral category using the symmetric monoidal Quillen equivalences $(\bbP,\bbU)$ and $(\bbN,\bbN^\#)$ from \cite{mandell_may_shipley_schwede} and \cite{mandell_may}, respectively. Going back and forth only changes the spectral category by a pointwise equivalence.
\end{rmk}

\begin{example}\nopagebreak\hfill
  \begin{enumerate}
  \item Every (orthogonal) ring spectrum $A$ can be considered as a spectral category with one object.

  \item If $\uncat\cC$ is a pointed category, then there is a spectral category $\Sigma^\infty \uncat\cC$ with mapping spectra given by the suspension spectra $\Sigma^\infty \uncat\cC(a,b)$ and composition arising from $\uncat\cC$.
  \end{enumerate}
\end{example}

\begin{defn}\label{def:base_category}
  For a spectral category $\cC$, a \textbf{base category} is a pair
  $(\uncat \cC, F: \Sigma^\infty \uncat\cC \to \cC)$ where $\uncat\cC$ is an
  ordinary category and $F$ is a spectral functor that is the identity when
  restricted to object sets.  When the functor is clear from context we omit it
  from the notation.
\end{defn}

We can form such a base category $\uncat\cC$ by restricting each mapping
spectrum to level zero and forgetting the topology. There are also examples
which do not arise in this way, such as those in \S\ref{sec:spec_mod}.

Many of our techniques will require the use of
pointwise cofibrant spectral categories. Spectral categories can
always be replaced with equivalent pointwise cofibrant spectral categories using the model structure from \cite[6.1,~6.3]{schwede_shipley}.

\begin{thm} \label{thm:cofib_repl_spcat}
	There is a pointwise cofibrant replacement functor $Q$ and a pointwise fibrant replacement functor $R$ on spectral categories. In particular,
	\[Q\colon \mathbf{SpCat} \to \mathbf{SpCat}\]
	is a functor equipped with a natural transformation $q\colon Q \Rightarrow \id_{\mathbf{SpCat}}$ such that $q_{\cC}$ is a pointwise equivalence for every spectral category $\cC$.
\end{thm}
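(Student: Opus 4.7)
The plan is to invoke the cofibrantly generated model structure on $\mathbf{SpCat}$ constructed by Schwede and Shipley in \cite[6.1, 6.3]{schwede_shipley}, and then feed it into the usual small object argument to extract functorial factorizations. In that model structure, the weak equivalences between spectral categories with a common object set are exactly the pointwise equivalences on mapping spectra, and the generating cofibrations are obtained by freely attaching cells along generating cofibrations of the stable model structure on orthogonal spectra. These two facts together tell us that cofibrant objects are pointwise cofibrant, since their mapping spectra are built as cell complexes from stably cofibrant orthogonal spectra.

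To define $Q$, I would apply the functorial (cofibration, trivial fibration) factorization coming from the small object argument to the canonical map $\emptyset_{\cC}\to\cC$ from the initial spectral category on the object set of $\cC$. This produces a spectral category $Q\cC$ together with a trivial fibration $q_{\cC}\colon Q\cC\to\cC$. Because the trivial fibrations in Schwede–Shipley are, in particular, pointwise equivalences on mapping spectra, $q_{\cC}$ is automatically a pointwise equivalence. For $R$, I would apply the dual (trivial cofibration, fibration) factorization to the map $\cC\to\ast_{\cC}$ to the terminal spectral category on the same object set.

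Naturality is then the standard output of running the small object argument functorially; concretely, a spectral functor $F\colon\cC\to\cD$ induces a compatible map on object sets, and the transfinite compositions assembling the factorizations can be performed in parallel to yield a canonical map $QF\colon Q\cC\to Q\cD$ with $q_{\cD}\circ QF = F\circ q_{\cC}$. The point to check is that the small object argument, which is usually stated on a fixed object set $O$, assembles over all object sets coherently; this is handled in \cite{schwede_shipley} by performing the argument slicewise on $\mathbf{SpCat}_{O}$ and transporting along the base-change functors induced by maps of object sets.

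The main obstacle, and the only place where one must look under the hood of \cite{schwede_shipley}, is the identification of cofibrant objects in their model structure with pointwise cofibrant spectral categories. This is not entirely formal, because cells are attached along generating cofibrations of orthogonal spectra in a way that could in principle destroy cofibrancy of other hom spectra; what saves us is that a cell attachment along $F_{A}(I)$ only modifies a single mapping spectrum (up to free composition), and the resulting transfinite composites remain pointwise cofibrant. Once this verification is in place, the theorem follows immediately, and the same machinery simultaneously delivers the fibrant replacement functor $R$.
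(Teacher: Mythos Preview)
Your proposal is correct and follows the same approach as the paper: both simply invoke the Schwede--Shipley model structure \cite[6.1, 6.3]{schwede_shipley} on spectral categories and extract the replacement functors from its functorial factorizations. The paper in fact gives no proof beyond the citation, so your write-up is already more detailed than what appears there; the one substantive point you flag---that cofibrant objects in this model structure are pointwise cofibrant, and that the construction is coherent across varying object sets---is exactly the content being imported from Schwede--Shipley.
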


\begin{defn}\label{def:THH}
Let $\cC$ be a spectral category.
We define the {\bf
    topological Hochschild homology} or {\bf cyclic bar construction} $\THH(\mc C)$ of $\cC$ to be the geometric realization
  of the simplicial spectrum $B^{\mathrm{cy}}_\bullet(\mc C; \cX)$ given by
  \[
    B^{\mathrm{cy}}_n (\mc{C}) \coloneqq \bigvee_{c_0, \dots, c_n} \mc{C}(c_0, c_1) \sma \mc{C}(c_1, c_2) \sma \cdots \sma \mc{C}(c_{n-1}, c_n) \sma \cC(c_n, c_0).
  \]
  When $\mc C$ is pointwise cofibrant, our definition is equivalent to the previous definitions of $\THH$ in the literature,
  e.g.
  \cite{bokstedt_thh,blumberg_mandell_unpublished,dundas_goodwillie_mccarthy,nikolaus_scholze}. We can always arrange that $\cC$ is pointwise cofibrant by using the cofibrant approximation functor from \cref{thm:cofib_repl_spcat}.
\end{defn}

\subsection{The spectral category of modules over a ring spectrum $A$}\label{sec:spec_mod}

One example of a spectral category is the category $\tensor[^A]{\Mod}{}$ of
$A$-module spectra over a ring spectrum $A$, with right-derived mapping spectra between them. We proceed with a precise definition, which plays orthogonal spectra and EKMM spectra off of each other. Recall that the two models of spectra are related by a symmetric monoidal Quillen equivalence $(\bbN,\bbN^\#)$ \cite[Ch. 1]{mandell_may}.

\begin{rmk}
As a technical point, the adjunction $(\bbN,\bbN^\#)$ is with respect to the positive stable model structure on orthogonal spectra \cite[\S 14]{mandell_may_shipley_schwede}, not the usual stable model structure. However, if $X$ is cofibrant in the stable model structure then $F_1S^1 \sma X$ is cofibrant in the positive stable model structure, where $F_1S^1$ is the free orthogonal spectrum on $S^1$ at spectrum level one. So we will continue using the term ``cofibrant'' to refer to the stable model structure. Any time we need positive cofibrancy, we simply smash with $F_1S^1$.
\end{rmk}

\begin{defn}\label{modules_convention}
	Let $A$ be an orthogonal ring spectrum that is pointwise cofibrant, meaning that the underlying orthogonal spectrum is cofibrant. We define the category of {\bf $A$-modules} $\tensor[^A]{\Mod}{}$ to have as objects the cofibrant module spectra over the EKMM ring spectrum $\bbN A$. The mapping spectra $\tensor[^A]{\Mod}{}(-,-)$ are obtained by applying the lax symmetric monoidal functor $\bbN^\#$ to the mapping spectra $F_{\bbN A}(-,-)$ of $\bbN A$-modules.
\end{defn}

Note that every mapping spectrum in $\tensor[^A]{\Mod}{}$ has the expected homotopy type (i.e. is equivalent to the right-derived mapping spectrum) because EKMM spectra form a closed symmetric monoidal model category in which every object is fibrant. We get more precise control over these mapping spectra by observing that a map of orthogonal spectra
\[ X \arr \tensor[^A]{\Mod}{}(P,P') = \bbN^\# F_{\bbN A}(P,P') \]
corresponds to a map of EKMM spectra $P \sma \bbN X \to P'$ that is $\bbN A$-linear.

By a straightforward Yoneda lemma argument, $\tensor[^A]{\Mod}{}$ receives a spectral functor from the more obvious category of cofibrant orthogonal $A$-module spectra and $A$-linear maps.

\begin{lem}\label{a_into_a_modules}
	There is a functor of spectral categories to $\tensor[^A]{\Mod}{}$, from the category whose objects are cofibrant orthogonal $A$-module spectra with mapping spectra $F_A(-,-)$. The functor sends each cofibrant $A$-module $M$ to the cofibrant $\bbN A$-module $\bbN F_1S^1 \sma \bbN M$.
\end{lem}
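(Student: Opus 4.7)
The plan is to apply the adjunction-style description recalled just before the statement: a map of orthogonal spectra $X \arr \tensor[^A]{\Mod}{}(P,P') = \bbN^{\#}F_{\bbN A}(P,P')$ is the same datum as an $\bbN A$-linear map of EKMM spectra $P \sma \bbN X \arr P'$. To specify the spectral functor on mapping spectra it therefore suffices to produce, naturally in the cofibrant orthogonal $A$-modules $M$ and $M'$, an $\bbN A$-linear map
\[ (\bbN F_1 S^1 \sma \bbN M) \sma \bbN F_A(M,M') \arr \bbN F_1 S^1 \sma \bbN M'. \]

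To build this map, I would begin with the evaluation pairing $M \sma F_A(M,M') \arr M'$, a map of orthogonal spectra which is $A$-linear in the $M$-factor. Applying $\bbN$ and using the strong symmetric monoidal structure of $(\bbN,\bbN^{\#})$ to identify $\bbN(M \sma F_A(M,M'))$ with $\bbN M \sma \bbN F_A(M,M')$ yields an $\bbN A$-linear map $\bbN M \sma \bbN F_A(M,M') \arr \bbN M'$ of EKMM spectra. Smashing on the left with $\bbN F_1 S^1$ then produces the desired map, and naturality in $M$ and $M'$ is inherited directly from naturality of the evaluation pairing.

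Before the functor is well-defined I would also check that $\bbN F_1 S^1 \sma \bbN M$ really lies in $\tensor[^A]{\Mod}{}$. Since $M$ is cofibrant in the stable model structure on orthogonal $A$-modules, the preceding remark shows that $F_1 S^1 \sma M$ is positive cofibrant, and since $\bbN$ is left Quillen out of the positive stable model structure, $\bbN(F_1 S^1 \sma M)$ is a cofibrant $\bbN A$-module; strong monoidality identifies this with $\bbN F_1 S^1 \sma \bbN M$. Compatibility with units and composition reduces by the same adjunction to unitality and associativity of the evaluation pairing together with the coherence of the monoidal comparison isomorphisms, and these are routine diagram chases. The main obstacle---and the content of the ``straightforward Yoneda lemma argument'' alluded to in the statement---is bookkeeping: carefully tracking $\bbN A$-linearity through the various monoidal comparison and symmetry isomorphisms. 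I would organize this by viewing the construction as a change-of-enrichment along the strong monoidal adjunction $(\bbN,\bbN^{\#})$, followed by the spectral endofunctor $\bbN F_1 S^1 \sma (-)$ on cofibrant $\bbN A$-modules.
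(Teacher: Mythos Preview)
Your proposal is correct and is precisely the ``straightforward Yoneda lemma argument'' the paper gestures at: the paper sets up the adjunction $X \to \bbN^\#F_{\bbN A}(P,P') \iff P \sma \bbN X \to P'$ just before the lemma and leaves the rest implicit, and you have spelled it out exactly as intended, including the cofibrancy check via positive cofibrancy of $F_1S^1 \sma M$ and the reduction of unit/associativity to the evaluation pairing.
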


Similarly, the category $\tensor[^A]{\Mod}{}$ receives a spectral functor from $A$ itself, by sending the single object to the $\bbN A$-module $\bbN F_1S^1 \sma \bbN A$.

\begin{lem}\label{ring_is_embedded}
	If $A$ is underlying cofibrant then this functor $A \to \tensor[^A]{\Mod}{}$ is a Dwyer-Kan embedding.
\end{lem}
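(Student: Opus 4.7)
The plan is to identify the single map on mapping spectra produced by this spectral functor and verify it is a stable equivalence of orthogonal spectra. Writing $P = \bbN F_1S^1 \sma \bbN A$, the functor's effect on endomorphisms is a map
\[
A \longrightarrow \tensor[^A]{\Mod}{}(P, P) = \bbN^\# F_{\bbN A}(P, P),
\]
and this is what must be shown to be a weak equivalence.

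First I would unravel the right-hand side using the free-module adjunction $F_{\bbN A}(\bbN A \sma X, M) \cong F_{\bbS}(X, M)$ in EKMM spectra, specialized to $X = \bbN F_1S^1$ and $M = P$. Combined with the strong monoidality of $\bbN$ to rewrite $\bbN F_1S^1 \sma \bbN A \cong \bbN(F_1S^1 \sma A)$, this identifies the target with $\bbN^\# F_{\bbS}(\bbN F_1S^1, \bbN(F_1S^1 \sma A))$.

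Next I would invoke the symmetric monoidal Quillen equivalence $(\bbN, \bbN^\#)$: since $F_1S^1$ is positively cofibrant, and $F_1S^1 \sma A$ is cofibrant because $A$ is underlying cofibrant, the derived comparison of internal homs provides a weak equivalence between this expression and the right-derived internal mapping spectrum from $F_1 S^1$ to $F_1 S^1 \sma A$ in orthogonal spectra. Using the standard equivalence $F_1S^1 \simeq \bbS$ and homotopy invariance of this derived internal hom in cofibrant variables, this last object is equivalent to $F_1 S^1 \sma A$, and hence to $A$.

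To pin down the map and not merely the homotopy type, I would trace through the chain of adjunctions: the functor's structure map $A \to \bbN^\# F_{\bbN A}(P, P)$ is adjoint to the multiplication/action map on $P$, which under the free-module adjunction corresponds to the identity of $P$; the Quillen-equivalence comparison sends this to the identity of $F_1 S^1 \sma A$, which finally maps to the identity of $A$ under $F_1 S^1 \sma A \simeq A$. Therefore the induced map on mapping spectra is (homotopic to) the identity of $A$, and the functor is a Dwyer--Kan embedding.

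The main obstacle, and where most care is required, is the bookkeeping of cofibrancy conditions and model-structure subtleties. In particular, one must respect the distinction between the positive and ordinary stable model structures on orthogonal spectra flagged in the paper's remark, and confirm that each internal-hom expression along the chain computes the correct right-derived functor, so that both the free-module adjunction and the $(\bbN, \bbN^\#)$-comparison pass to the homotopy category.
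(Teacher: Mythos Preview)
Your proposal is correct and follows essentially the same strategy as the paper: reduce to the single map on endomorphism spectra, strip away the free $\bbN A$-module structure via the extension-of-scalars adjunction, and trace the resulting map back to the identity of $A$. The only organizational difference is that the paper phrases every step through representable functors (Yoneda), which turns the chain of comparisons into on-the-nose isomorphisms terminating in the single equivalence $A \overset{\sim}{\to} \bbN^\#\bbN A$, whereas you manipulate internal homs directly and appeal to a derived comparison of mapping spectra across the Quillen equivalence $(\bbN,\bbN^\#)$; the Yoneda formulation is slightly cleaner precisely because it sidesteps the bookkeeping you flag in your final paragraph.
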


\begin{proof}
	It suffices to check that one map of spectra is an equivalence: the composite
	\begin{equation}\label{eq:to_be_equiv}
  A \to \bbN^\# F_{\bbN A}(\bbN(F_1S^1 \sma A),\bbN(F_1S^1 \sma A)) \xarr\sim \bbN^\# F_{\bbN A}(\bbN(F_1S^1 \sma A),\bbN A).
  \end{equation}
	The representable functor classified by the right-hand side corresponds to $\bbN A$-linear maps
	\[ \bbN F_1 S^1 \sma \bbN A \sma \bbN (-) \to \bbN A \]
	which simplifies to $\bbS$-linear maps
	\[ \bbN F_1 S^1 \sma \bbN (-) \to \bbN A \]
	which is also represented by
	\[ F_{\bbS}(F_1S^1,\bbN^\#\bbN A) \overset\sim\leftarrow \bbN^\# \bbN A \overset\sim\leftarrow A, \]
	the homs being taken in orthogonal spectra, and using that $A \to \bbN^\#\bbN A$ is an equivalence \cite[3.5]{mandell_may}. Along these simplifications of representable functors, the original map \eqref{eq:to_be_equiv}, corresponding to the right $A$-action, goes to the class of the identity map $A \to A$. In other words, we have constructed a commutative diagram
	\[ \xymatrix{
		A \ar[d]_-{\id} \ar[r] & \bbN^\# F_{\bbN A}(\bbN(F_1S^1 \sma A),\bbN(F_1S^1 \sma A)) \ar[r]^-\sim & \bbN^\# F_{\bbN A}(\bbN(F_1S^1 \sma A),\bbN A) \ar@{<->}[d]^-\cong \\
		A \ar[r]^-\sim & \bbN^\# \bbN A \ar[r]^-\sim & F_{\bbS}(F_1S^1,\bbN^\#\bbN A)
	} \]
  whose top row is the map \eqref{eq:to_be_equiv}, which is thus an equivalence.
\end{proof}

Perfect modules also form a spectral category.
\begin{example}\label{ex:perfect_modules}
  For a ring spectrum $A$, the category of {\bf perfect $A$-modules}
  $\tensor[^A]{\Perf}{}$ is the full subcategory of $\tensor[^A]{\Mod}{}$
  spanned by the modules that are retracts in the homotopy category of finite
  cell $\bbN A$-modules. The inclusion $A \to \tensor[^A]{\Mod}{}$ factors through $\tensor[^A]{\Perf}{}$.
\end{example}

A fundamental property of $\THH$ is Morita invariance:

\begin{thm}\label{thm:THH_morita_invariant}
The canonical inclusion functor $A \arr \tensor[^A]{\Perf}{}$ induces an equivalence
\[
\THH(A) \oarr{\sim} \THH(\tensor[^A]{\Perf}{}).
\]
\end{thm}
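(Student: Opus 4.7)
The plan is to factor the inclusion $A \to \tensor[^A]{\Perf}{}$ as $A \to \mathcal{F} \to \tensor[^A]{\Perf}{}$, where $\mathcal{F} \subset \tensor[^A]{\Perf}{}$ is the spectral subcategory spanned by finite cell $\bbN A$-modules, and to verify that each leg induces an equivalence on $\THH$. This two-step strategy isolates the ``cell-building'' content of the inclusion (the map $A \to \mathcal{F}$) from the ``idempotent-completion'' content (the map $\mathcal{F} \to \tensor[^A]{\Perf}{}$), each of which is handled by a distinct standard argument.

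For $\THH(A) \to \THH(\mathcal{F})$, I would filter $\mathcal{F}$ by full subcategories $\mathcal{F}_n$ of modules admitting a cell structure with at most $n$ cells, and argue by induction. The base case $\mathcal{F}_1$ consists of shifted free $\bbN A$-modules on a single cell, and a Dwyer--Kan-style calculation along the lines of \cref{ring_is_embedded}, combined with the suspension-invariance of $\THH$, identifies $\THH(A) \simeq \THH(\mathcal{F}_1)$. For the inductive step $\mathcal{F}_n \hookrightarrow \mathcal{F}_{n+1}$, every object of $\mathcal{F}_{n+1}$ arises from a cofiber sequence $S^{k-1} \sma \bbN A \to M_n \to M_{n+1}$ with $M_n \in \mathcal{F}_n$. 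After equipping these subcategories with a Waldhausen structure (stable equivalences, Hurewicz-type cofibrations of cell modules), the additivity theorem for $\THH$ shows that each inclusion $\mathcal{F}_n \hookrightarrow \mathcal{F}_{n+1}$ induces an equivalence on $\THH$: the ``new cell'' contribution is identified with a copy of $\THH(A)$ already present in $\THH(\mathcal{F}_n)$. Passing to the colimit in $n$ yields $\THH(A) \simeq \THH(\mathcal{F})$.

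For $\THH(\mathcal{F}) \to \THH(\tensor[^A]{\Perf}{})$, the claim is that $\THH$ is invariant under idempotent completion: every perfect module is a retract in the homotopy category of an object of $\mathcal{F}$, and each cyclic word of morphisms in $\tensor[^A]{\Perf}{}$ can be replaced, up to homotopy, by one in $\mathcal{F}$ by inserting a bar resolution of the splitting idempotent at each vertex. This is the spectral analogue of Thomason--Trobaugh cofinality, and is treated in detail in work of Blumberg--Mandell. The main obstacle I anticipate is the cellular-induction step: arranging the cell attachments into a genuinely functorial Waldhausen structure so that the additivity theorem applies coherently across the inclusion, rather than just pointwise on individual objects. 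Using the pointwise-cofibrant replacement from \cref{thm:cofib_repl_spcat} together with the $S_\bullet$-machinery developed later in the paper should be the natural way to make this argument precise and homotopy-invariant.
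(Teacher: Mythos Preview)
The paper does not give its own proof of this theorem; it simply cites \cite[5.9]{cp} and \cite[5.12]{blumberg_mandell_published}. So there is no in-paper argument to compare against, only the referenced literature.

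Your two-step decomposition $A \to \mathcal{F} \to \tensor[^A]{\Perf}{}$ is a sensible organizing principle, and the second step (idempotent-completion invariance) is indeed handled in the cited Blumberg--Mandell paper essentially as you describe. The problem lies in your inductive step for $\THH(\mathcal{F}_n) \to \THH(\mathcal{F}_{n+1})$. The additivity theorem, in the form stated in this paper or in its bimodule variants, controls $\THH$ applied to cofiber sequences of exact functors between \emph{fixed} spectral Waldhausen categories (equivalently, it splits $\THH(S_k\cC)$). It does not directly say anything about enlarging the object set of a spectral category. To convert one into the other you would need an intermediate category $\mathcal{E}$ of objects of $\mathcal{F}_{n+1}$ \emph{equipped with} a chosen $(n{+}1)$-cell filtration, together with an argument that the forgetful functor $\mathcal{E} \to \mathcal{F}_{n+1}$ induces an equivalence on $\THH$. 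That forgetful functor is neither a Dwyer--Kan equivalence nor obviously $\THH$-invariant: many inequivalent filtrations sit over the same module, and the mapping spectra in $\mathcal{E}$ are genuinely different from those in $\mathcal{F}_{n+1}$. You flag this as ``the main obstacle,'' but it is not a technicality that the $S_\bullet$-machinery of this paper resolves; it is exactly the content of Morita invariance that you are trying to prove.

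The proofs in the cited references avoid this circularity. They argue directly on the cyclic bar construction that adding to $\cC$ a single object lying in a cofiber sequence of existing objects does not change $\THH$: one computes $\THH$ of the enlarged category as $\THH$ of $\cC$ with coefficients in a bimodule, and then uses the cofiber sequence to split that bimodule and cancel the new contribution. This is closer in spirit to a localization or d\'evissage argument than to additivity for $S_\bullet$. If you want to salvage your outline, the cleanest fix is to replace the appeal to \cref{thm:THH_additivity} in the inductive step with this bimodule-coefficient argument, and then your filtration by cell count becomes a valid induction.
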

\noindent For a proof, see \cite[5.9]{cp} or \cite[5.12]{blumberg_mandell_published}.

Recall that for ring spectra $A$ and $B$, a $(B,A)$-bimodule is a module over $B \sma A^{\op}$, or equivalently a spectrum $M$ with commuting left and right actions by $B$ and $A$, respectively. Like the proof of \cref{ring_is_embedded}, the proofs of \cref{lem:module_func,commuting_bimodules,composing_bimodules} are consequences of the Yoneda lemma through  comparisons of associated representable functors.

\begin{lem} \label{lem:module_func}
	Let $A$ and $B$ be orthogonal ring spectra.  Then for any cofibrant $(B,A)$-bimodule
	$M$, the map induced by $\bbN M \sma_{\bbN A} -$ induces a well-defined spectral functor
	\[ \tensor[^A]{\Mod}{} \to \tensor[^B]{\Mod}{}. \]
\end{lem}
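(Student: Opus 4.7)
The plan is to produce the functor by transporting an EKMM-enriched functor along the lax symmetric monoidal right adjoint $\bbN^\#$. On objects, I send a cofibrant $\bbN A$-module $P$ to the $\bbN B$-module $\bbN M \sma_{\bbN A} P$, with $\bbN B$-action inherited from $\bbN M$. For this to land in $\tensor[^B]{\Mod}{}$, I need to know that $\bbN M \sma_{\bbN A} P$ is cofibrant as a $\bbN B$-module; this will follow from the cofibrancy of $M$ as a $(B,A)$-bimodule via the lift of the Quillen equivalence $(\bbN, \bbN^\#)$ to bimodule model categories, together with the standard fact that smashing a cofibrant $(\bbN B, \bbN A)$-bimodule over $\bbN A$ with a cofibrant left $\bbN A$-module produces a cofibrant left $\bbN B$-module in EKMM.

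For the action on mapping spectra, I follow the Yoneda strategy flagged before the statement. As recorded just after \cref{modules_convention}, a map of orthogonal spectra $X \to \tensor[^A]{\Mod}{}(P, P')$ corresponds under $(\bbN, \bbN^\#)$ to an $\bbN A$-linear map $P \sma \bbN X \to P'$ of EKMM spectra. I define the mapping spectrum map by sending such a $\bbN A$-linear map to the $\bbN B$-linear composite
\[ (\bbN M \sma_{\bbN A} P) \sma \bbN X \;\cong\; \bbN M \sma_{\bbN A} (P \sma \bbN X) \;\longrightarrow\; \bbN M \sma_{\bbN A} P'. \]
This assignment is natural in $X$, so by the Yoneda lemma it is classified by a unique map of orthogonal spectra
\[ \tensor[^A]{\Mod}{}(P, P') \;\longrightarrow\; \tensor[^B]{\Mod}{}\bigl(\bbN M \sma_{\bbN A} P,\; \bbN M \sma_{\bbN A} P'\bigr), \]
which is the desired morphism map.

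Compatibility with composition and with identities is then formal. After passing to representable functors, associativity reduces to the evident associativity of $\bbN M \sma_{\bbN A} -$ on $\bbN A$-linear composites in EKMM, and unitality reduces to the fact that $\bbN M \sma_{\bbN A} \bbN A \cong \bbN M$ carries the identity to the identity. The only genuine technical work is the cofibrancy of $\bbN M \sma_{\bbN A} P$ as a $\bbN B$-module, and that is where I expect the argument to be concentrated; the rest is a formal unwinding of representable functors, parallel in style to the proof of \cref{ring_is_embedded}.
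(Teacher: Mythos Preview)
Your approach is essentially what the paper indicates: it states only that the proof, like that of \cref{ring_is_embedded}, is ``a consequence of the Yoneda lemma through comparisons of associated representable functors,'' and your argument unwinds exactly that strategy. The one point worth keeping an eye on is the positive-cofibrancy caveat in the paper's remark before \cref{modules_convention}, which bears on whether $\bbN M$ is genuinely cofibrant as a $(\bbN B,\bbN A)$-bimodule; but you have correctly isolated the cofibrancy verification as the place where the real work lives.
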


Let $A$ and $B$ be orthogonal ring spectra and let $M$ a cofibrant $(B,A)$-bimodule. Then
\[ \tensor[^B]{\Mod}{}(\bbN B,\bbN F_1S^1 \sma \bbN M)\]
is also a $(B,A)$-bimodule using the composition in the spectral category $\tensor[^B]{\Mod}{}$ and the spectral functors from \cref{a_into_a_modules,lem:module_func}. Removing the technical decorations, this is just the fact that maps $B \to M$ can be pre-composed by the $B$-action on $B$ or post-composed with the $A$-action on $M$. We abbreviate this bimodule as $\tensor[^B]{\Mod}{_M}$, which is consistent with \cite[\dtref{Example 4.5}{\cref{dt-ex:bimodule_smash_functor}}]{clmpz-dt}.

\begin{lem}\label{commuting_bimodules}
	For each a cofibrant $(B,A)$-bimodule $M$, there is an equivalence of $(B,A)$-bimodules $M \to \tensor[^B]{\Mod}{_M}$, given by the map whose adjoint is the $B$-action on $M$. For each map of $(B,A)$-bimodules the resulting square
	\[ \xymatrix @R=1.5em {
		M \ar[d] \ar[r] & N \ar[d] \\
		\tensor[^B]{\Mod}{_M} \ar[r] & \tensor[^B]{\Mod}{_N}
	} \]
	strictly commutes.
\end{lem}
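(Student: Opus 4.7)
The plan is to generalize the Yoneda-lemma argument of \cref{ring_is_embedded}. I would first unwind $\tensor[^B]{\Mod}{_M} = \bbN^\# F_{\bbN B}(\bbN B, \bbN F_1 S^1 \sma \bbN M)$ as a representing object in orthogonal spectra. Using the closed structure on $\bbN B$-modules, the free/forgetful adjunction between $\bbN B$-modules and EKMM spectra, and the $(\bbN, \bbN^\#)$ adjunction twice, a map of orthogonal spectra $X \to \tensor[^B]{\Mod}{_M}$ corresponds successively to an $\bbN B$-linear map $\bbN B \sma \bbN X \to \bbN F_1 S^1 \sma \bbN M$, then to an EKMM map $\bbN X \to \bbN F_1 S^1 \sma \bbN M$, and finally to a map of orthogonal spectra $X \to \bbN^\# (\bbN F_1 S^1 \sma \bbN M)$. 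Hence the representable functor classified by $\tensor[^B]{\Mod}{_M}$ is naturally isomorphic to the one classified by $\bbN^\# (\bbN F_1 S^1 \sma \bbN M)$.

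Next, I would use that $F_1 S^1 \sma M$ is positive cofibrant and that $(\bbN, \bbN^\#)$ is a symmetric monoidal Quillen equivalence in the positive stable model structure to get a unit equivalence $F_1 S^1 \sma M \xarr\sim \bbN^\# (\bbN F_1 S^1 \sma \bbN M)$. Combined with the stable equivalence $F_1 S^1 \sma M \xarr\sim M$ (coming from $F_1 S^1 \xarr\sim \bbS$ smashed with the cofibrant $M$), this yields a natural zig-zag of equivalences between the representable functors of $M$ and $\tensor[^B]{\Mod}{_M}$. Tracking the asserted map $M \to \tensor[^B]{\Mod}{_M}$ through this chain, its adjoint is by construction the $B$-action on $M$, and this matches the Yoneda class of the identity on $M$ through the zig-zag; so the map is an equivalence.

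For the bimodule structure, the left $B$-action on $\tensor[^B]{\Mod}{_M}$, arising from precomposition with the $B$-action on $\bbN B$, and the right $A$-action, arising from postcomposition with the $A$-action on $\bbN M$, each pass through the adjunction chain to the original $B$- and $A$-actions on $M$. Thus the identification is $(B,A)$-linear. Strict commutativity of the naturality square for a bimodule map $M \to N$ follows from the strict functoriality of each adjunction step, since no cofibrant replacement is applied to $M$ or $N$ after the initial choice.

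The main technical obstacle will be keeping careful track of the $(B,A)$-bimodule structure through the several layers of adjunction, and verifying that the canonical comparison map, defined via the adjoint of the $B$-action, corresponds \emph{strictly} (rather than merely up to further equivalence) to the identity class under the Yoneda identifications above.
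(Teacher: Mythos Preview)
Your proposal is correct and follows essentially the same approach as the paper. The paper itself does not give a detailed proof of this lemma; it simply states (just before \cref{lem:module_func}) that, like \cref{ring_is_embedded}, the result is a consequence of the Yoneda lemma via a comparison of representable functors. Your plan is precisely such a Yoneda argument, carried out in the same style as the proof of \cref{ring_is_embedded}, so there is nothing further to compare.
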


\begin{lem}\label{composing_bimodules}
	For each cofibrant $(C,B)$-bimodule $N$ and cofibrant $(B,A)$-bimodule $M$ the following triangle of $(C,A)$-bimodules commutes.
	\[ \xymatrix{
		N \sma_B M \ar[d] \ar[dr] & \\
		\tensor[^C]{\Mod}{_N} \sma_{\tensor[^B]{\Mod}{}} \tensor[^B]{\Mod}{_M} \ar[r] & \tensor[^C]{\Mod}{_{N \sma_B M}}
	} \]
\end{lem}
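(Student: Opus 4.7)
The plan is to prove commutativity of the triangle by a Yoneda-style diagram chase of exactly the type that established \cref{ring_is_embedded} and \cref{commuting_bimodules}. Abbreviate $P \coloneqq N \sma_B M$, a cofibrant $(C,A)$-bimodule. By \cref{commuting_bimodules}, the diagonal map $P \to \tensor[^C]{\Mod}{_P}$ is characterized as the unique $(C,A)$-bimodule map whose adjoint, under the identification
\[ \tensor[^C]{\Mod}{_P} = \bbN^\# F_{\bbN C}(\bbN(F_1S^1 \sma C), \bbN(F_1S^1 \sma P)), \]
is the left $C$-action on $P$, suitably decorated by $F_1S^1$'s. So it suffices to show that the composite through $\tensor[^C]{\Mod}{_N} \sma_{\tensor[^B]{\Mod}{}} \tensor[^B]{\Mod}{_M}$ has the same Yoneda-adjoint.

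I would first unwind the bottom horizontal arrow. It is the composition law in the spectral category $\tensor[^C]{\Mod}{}$ combined with the spectral functor $\tensor[^B]{\Mod}{} \to \tensor[^C]{\Mod}{}$ from \cref{lem:module_func} (induced by $N$). Under the Yoneda identification, its effect on a ``generalized point'' of $\tensor[^C]{\Mod}{_N} \sma_{\tensor[^B]{\Mod}{}} \tensor[^B]{\Mod}{_M}$ given by a pair $(\alpha, \beta)$ — where $\alpha$ is a $\bbN C$-linear map with target $\bbN(F_1S^1\sma N)$ and $\beta$ is a $\bbN B$-linear map with target $\bbN(F_1S^1 \sma M)$ — is precisely the composite $\alpha \sma_{\bbN B} \beta$, after absorbing the $F_1S^1$'s via the symmetric monoidal structure on $\bbN$. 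Precomposing with the left vertical map feeds in the particular pair coming from \cref{commuting_bimodules}, namely the left $C$-action on $N$ paired with the left $B$-action on $M$. Smashing these together over $\bbN B$ and then invoking the universal property of $N \sma_B M$ as the $B$-coequalizer yields exactly the left $C$-action on $P = N \sma_B M$.

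Thus both composites in the triangle represent the same natural transformation on $(C,A)$-bimodule-valued representable functors, proving strict commutativity. The corresponding statement for right $A$-actions is automatic, since all arrows in sight are produced by spectral functors that transport right actions by construction (\cref{lem:module_func}). The main obstacle is purely organizational: one must carefully chase the $F_1S^1$ factors and the symmetric monoidal comparison maps $\bbN X \sma \bbN Y \cong \bbN(X \sma Y)$ while rewriting a composition in the spectral category as a smash product of bimodule maps. This is a diagram chase one step longer than the proof of \cref{ring_is_embedded}, but of precisely the same character.
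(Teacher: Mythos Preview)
Your proposal is correct and matches the paper's approach: the paper does not give a separate proof of this lemma, but states that, like \cref{ring_is_embedded}, it is a consequence of the Yoneda lemma through comparisons of associated representable functors. Your argument is precisely such a comparison, carried out in the expected way.
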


\section{The spectral category of $\cC$-valued diagrams}
\label{ex:fun_cat}

Let $\uncat\cC$ be a category and $I$ a small category. Define $\Fun(I,\uncat{\cC})$ to be the category whose objects are functors $I \to \uncat\cC$ and whose morphisms are natural transformations.

In this section, we consider a generalization of this where $\uncat\cC$ is replaced by a spectral category $\cC$, but $I$ continues to be an ordinary category. The construction we are after is present in \cite[2.4]{mcclure_smith} and \cite[2.3]{blumberg_mandell_unpublished}, but we discuss it in detail here since it is the essential technical ingredient we need in \cref{sec:s_dot} to form the $S_\bullet$ construction of a spectral Waldhausen category.

We first summarize the main properties of the construction.
\begin{thm}\label{thm:moore_end}
	If $\cC$ is a spectral category with base category $\uncat{\cC}$ (\cref{def:base_category}), and $I$ is an ordinary category, there is a spectral category $\spcat{\Fun(I,\cC)}$ with base category $\Fun(I,\uncat{\cC})$. The construction defines a functor
	\[ \Fun(-,-)\colon \mathbf{Cat}^\op \times \mathbf{SpCat} \arr \mathbf{SpCat} \]
	satisfying the following three conditions:
	\begin{enumerate}
		\item\label{thm:moore_end_we} There is a pointwise weak equivalence $\spcat{\cC} \overset\sim\arr \spcat{\Fun(*,\cC)}$ natural in $\spcat{\cC}$.
		\item\label{thm:moore_end_zz} When $I = I_0 \times \{ 0 \to \dotsm \to k \}$ then $\phi\colon I \to \uncat{\cC}$ is composed of diagrams $\phi_i\colon I_0 \to \uncat{\cC}$ for $0 \leq i \leq k$ with natural transformations between them. For two diagrams $\phi, \gamma$ these natural transformations give a zig-zag
		\[ \xymatrix @R=1em @!C=4em {
			\spcat{\Fun(I_0,\cC)}(\phi_0,\gamma_0) \ar[rd] && \ar[ld] \spcat{\Fun(I_0,\cC)}(\phi_1,\gamma_1) \ar[rd] & \cdots & \ar[ld] \spcat{\Fun(I_0,\cC)}(\phi_k,\gamma_k) \\
			& \spcat{\Fun(I_0,\cC)}(\phi_0,\gamma_1) && \cdots &
		}
		\]
		The maps
	\[\spcat{\Fun(I,\cC)}(\phi,\gamma) \to \spcat{\Fun(I_0,\cC)}(\phi_i,\gamma_i)\]
extend to an equivalence from $\spcat{\Fun(I,\cC)}(\phi,\gamma)$ to the homotopy limit of this zig-zag. We call this equivalence the \textbf{$k$th Segal map}.

 More generally, the restriction of $\spcat{\Fun(I,\cC)}(\phi,\gamma)$ to the subcategory $I_0 \times \{ i \to \cdots \to j \}$ for any $0 \leq i \leq j \leq k$ corresponds to the restriction of this zig-zag to the smaller zig-zag from $i$ to $j$.
		\item\label{thm:moore_end_iso} If $I \subset J$ is a full subcategory and every tuple of arrows passing through $J - I$ either begins or ends in $J - I$, then the spectral functor
\[\spcat{\Fun(J,\cC)} \arr \spcat{\Fun(I,\cC)}\]
 induces isomorphisms on mapping spectra between diagrams $J \to \uncat\cC$ that send $J - I$ to $* \in \ob \uncat{\cC}$.
	\end{enumerate}
\end{thm}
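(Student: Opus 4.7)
The plan is to unpack the McClure--Smith/Blumberg--Mandell Moore end construction of $\spcat{\Fun(I, \cC)}$ and then verify each of the three conditions directly. The Moore end replaces the usual end of a bifunctor by a point-set model whose composition is strictly associative and unital; applied to $(i, j) \mapsto \cC(\phi(i), \gamma(j))$, it assembles $\spcat{\Fun(I, \cC)}(\phi, \gamma)$ out of composable strings $i_0 \to \cdots \to i_n$ in $I$ paired with a Moore-path length assignment on the arrows. Bifunctoriality in $I$ and $\cC$, together with the base-category structure induced by $\Fun(I, \uncat{\cC})$, is a formal consequence of the construction. For condition \eqref{thm:moore_end_we}, when $I = *$ every string reduces to a sequence of identities, so the Moore end simplifies to a strict fattening of $\cC(\phi, \gamma)$ by smashing with a contractible cube of length parameters; the canonical inclusion $\cC \to \spcat{\Fun(*, \cC)}$ is the ``zero-length'' embedding, which is a deformation retract and hence a pointwise weak equivalence.

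Condition \eqref{thm:moore_end_zz} is the main content. The first step is a Fubini-type identification
\[
\spcat{\Fun(I_0 \times [k], \cC)}(\phi, \gamma) \cong \spcat{\Fun([k], \spcat{\Fun(I_0, \cC)})}(\phi, \gamma),
\]
which holds because composable strings in $I_0 \times [k]$ factor uniquely into a string in $I_0$ and a string in $[k]$ with matching length parameters. This reduces the problem to analyzing the Moore end over $[k]$ for the spectral category $\spcat{\Fun(I_0, \cC)}$. Because morphisms in $[k]$ are determined by their source and target, one can rewrite this Moore end as a strict limit whose indexing shape matches the zig-zag from the statement. The comparison to the actual homotopy limit of the zig-zag is then proved by induction on $k$: the base case $k = 0$ is condition \eqref{thm:moore_end_we}, and the inductive step writes both sides as homotopy pullbacks along the inclusion $[k - 1] \hookrightarrow [k]$, verifying that the Moore-path parameters precisely realize the standard path-object model for such a pullback. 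Compatibility with sub-intervals $[i, j] \subset [k]$ then follows from naturality of the Moore end in the first variable.

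For condition \eqref{thm:moore_end_iso}, the hypothesis ensures that any composable string in $J$ passing through $J - I$ has its first or last object in $J - I$. For diagrams $\phi, \gamma$ sending $J - I$ to the distinguished object $*$, a direct inspection of the wedge-sum and identification structure of the Moore end shows that the contributions from such strings collapse to the basepoint, leaving only strings entirely contained in $I$. The restriction map $\spcat{\Fun(J, \cC)}(\phi, \gamma) \to \spcat{\Fun(I, \cC)}(\phi|_I, \gamma|_I)$ is therefore an isomorphism of spectra.

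The main obstacle is condition \eqref{thm:moore_end_zz}: the Fubini-type reduction must be verified as a strict equality of spectra (not merely a weak equivalence), and the inductive identification of the Moore end over $[k]$ with the iterated homotopy pullback requires careful bookkeeping of the Moore-path length parameters to ensure the comparison map is manifestly a weak equivalence. Conditions \eqref{thm:moore_end_we} and \eqref{thm:moore_end_iso} are then relatively formal once the Moore end formalism is in place.
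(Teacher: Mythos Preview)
Your overall plan has the right shape, but there is a genuine gap in the argument for condition \eqref{thm:moore_end_zz}, precisely at the point you flag as the main obstacle.

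You assert a strict Fubini isomorphism
\[
\spcat{\Fun(I_0 \times [k], \cC)}(\phi, \gamma) \;\cong\; \spcat{\Fun([k], \spcat{\Fun(I_0, \cC)})}(\phi, \gamma),
\]
justified by the claim that composable strings in $I_0 \times [k]$ ``factor uniquely into a string in $I_0$ and a string in $[k]$ with matching length parameters.'' This is not correct for the Moore end as actually constructed. The Moore end over $I_0 \times [k]$ carries a \emph{single} $[0,\infty)$ length parameter (it is the pushout $h\cC(\phi,\gamma) \sma (0,\infty)_+ \cup_{\cC(\phi,\gamma)\sma(0,\infty)_+} \cC(\phi,\gamma)\sma[0,\infty)_+$). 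By contrast, the iterated Moore end on the right has an outer $[0,\infty)$ parameter from the end over $[k]$, and each mapping spectrum $\spcat{\Fun(I_0,\cC)}(\phi_j,\gamma_{j'})$ appearing inside already carries its own inner $[0,\infty)$ parameter. These are not isomorphic spectra. (Related to this, your description of the Moore end as assigning ``a Moore-path length assignment on the arrows'' is off: there is one global length, not one per arrow.)

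The paper sidesteps this by observing at the outset that the Moore end is homotopy equivalent to the ordinary homotopy end (via the deformation retract of $[0,\infty)$ onto a point), so for \eqref{thm:moore_end_zz} it suffices to work with homotopy ends. There the Fubini statement \emph{does} hold on the nose: the cobar construction over $I_0 \times [k]$ is the diagonal of a bicosimplicial object, and totalization of the diagonal is homeomorphic to the double totalization. After reducing to $I_0 = *$, the paper also does not induct on $k$; it instead expands the zig-zag to a larger triangular diagram with the same homotopy limit and exhibits an explicit homeomorphism between that homotopy limit (computed via maps out of $\Delta^{j-i}$) and the cobar totalization. Your inductive strategy could likely be made to work once you pass to homotopy ends, but as written the strict-isomorphism requirement you impose cannot be met.

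A minor point on \eqref{thm:moore_end_iso}: the cobar construction is built from \emph{products} over strings $i_0 \to \cdots \to i_n$, not wedge sums. The correct argument is that the hypothesis forces every string not contained in $I$ to begin or end in $J - I$, so for diagrams sending $J - I$ to $*$ the corresponding factor $\cC(\phi(i_0),\gamma(i_n))$ is the zero spectrum; the map of cosimplicial objects is then a product of identities and zero maps, hence an isomorphism, and this passes to the strict end, homotopy end, and Moore end simultaneously.
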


The theorem is proved over the remainder of this section, after which we briefly discuss the 2-functoriality of $\Fun(I, \cC)$ in the variable $\cC$ and the \emph{failure} of 2-functoriality in the variable $I$.

\subsection{Construction}

The construction of $\Fun(I,\spcat\cC)$ is by the Moore end, a variant of the homotopy end of two diagrams that has previously been considered in \cite[\S2]{mcclure_smith} and \cite[\S2.3]{blumberg_mandell_unpublished}. Like the Moore path space, it is constructed so that it forms a spectral category whose composition maps are associative, not just associative up to homotopy.

We describe the construction in stages. First recall that given two diagrams $\phi,\gamma\colon I \rightrightarrows \uncat{\cC}$, the {\bf end} is defined as the equalizer
\begin{equation}\label{eq:strict_end}
\cC(\phi,\gamma) \coloneqq \operatorname{eq}\Big( \prod_{i_0 \in \ob I} \spcat{\cC}(\phi(i_0),\gamma(i_0))
\rightrightarrows \prod_{i_0 \arr i_1}
\spcat{\cC}(\phi(i_0),\gamma(i_1))\Big).
\end{equation}
Performing this for all pairs of diagrams $I \arr \uncat{\cC}$, these define a spectral category with base category $\Fun(I,\uncat\cC)$. This defines an extension of $\Fun$ to a functor
\[\Fun: \mathbf{Cat}^\op \times \mathbf{SpCat} \arr \mathbf{SpCat}\]
and the natural map $\cC \arr \Fun(*, \cC)$ is an isomorphism of spectral categories.

The end, unfortunately, does \emph{not} suit our purposes, because we have no control over its homotopy type. We could fix this using cofibrant replacements, but that turns out to be unsatisfactory once we start changing $I$. So we pass instead to the homotopy end.

Given two diagrams in the underlying category $\phi,\gamma\colon I \rightrightarrows \uncat{\cC}$, the strict equalizer \eqref{eq:strict_end}
is a truncation of the cobar construction, a cosimplicial spectrum that at level $n$ is given by
\[ [n] \mapsto \prod_{i_0 \arr \cdots \arr i_n} \spcat{\cC}(\phi(i_0),\gamma(i_n)), \]
where the product runs over all composable $n$-tuples of arrows in $I$. The coface and codegeneracy maps are duals of the usual maps that define the categorical bar construction. The {\bf homotopy end} $h\cC(\phi,\gamma)$ is defined to be the totalization of this cobar construction.

As a simple example, if $I = \{ 0 \to 1 \}$ is a single arrow, then $\phi$ and $\gamma$ can be described as two arrows $a_0 \to a_1$ and $b_0 \to b_1$ in $\uncat\cC$. The spectrum of maps from the arrow $a_0 \to a_1$ to the arrow $b_0 \to b_1$ is then given by the strict end as the pullback
\[ \spcat\cC(a_0,b_0) \times_{\spcat\cC(a_0,b_1)} \spcat\cC(a_1,b_1), \]
while the homotopy end is the homotopy pullback. Each point in the homotopy end can be visualized as a pair of vertical arrows as below, plus a homotopy making the square commute.
\[ \xymatrix @R=1.5em @C=1.5em{ a_0 \ar@{-->}[d] \ar[r] & a_1 \ar@{-->}[d] \\ b_0 \ar[r] & b_1 } \]
Clearly it is easier to control the homotopy type of the homotopy end, but as a tradeoff it introduces a new problem: these maps cannot be composed in a strictly associative way. Stacking two of the above squares, we can compose the vertical maps and concatenate the homotopies, but for reasons of path parametrization this is not associative, only associative up to homotopy. The homotopy end therefore does not define a spectral category.

The Moore end is introduced to fix this issue in exactly the same way that the Moore loop space makes $\Omega X$ strictly associative, by letting the ``length'' of the homotopy vary. To be precise, the collapse map of $\mathbf\Delta$-diagrams $\Delta^\bullet \arr *$ induces a canonical map from the strict equalizer to the totalization
\[ \iota\colon \cC(\phi,\gamma) \arr h\cC(\phi,\gamma). \]
The {\bf Moore end} of $\phi$ and $\gamma$ is defined as the pushout
\[ \xymatrix{
	\cC(\phi,\gamma) \sma (0,\infty)_+ \ar[d] \ar[r]^{\iota \times \id} & h\cC(\phi,\gamma) \sma (0,\infty)_+ \ar[d] \\
	\cC(\phi,\gamma) \sma [0,\infty)_+ \ar[r] & h^{M}\cC(\phi,\gamma).
} \]
Note that $h^{M}\cC(\phi,\gamma)$ may be identified (levelwise) with a subset of $h\cC(\phi,\gamma) \sma [0,\infty)_+$, but it does not have the subspace topology. The deformation retract of $[0,\infty)$ onto $\{1\}$ induces a homotopy equivalence between the Moore end and the homotopy end, $h^M\spcat{\cC}(\phi,\gamma) \simeq h\spcat{\cC}(\phi,\gamma)$.

\begin{defn}
The mapping spectrum $\Fun(I, \cC)(\phi, \gamma)$ between two diagrams $\phi, \gamma \colon I \arr \uncat{\cC}$ is defined to be the Moore end $h^M\spcat{\cC}(\phi,\gamma)$.
\end{defn}

\subsection{Composition}
We now construct the composition maps for the mapping spectra in $\Fun(I, \cC)$.
For each real number $p \geq 0$, let $\Delta^n_p$ denote the $n$-simplex scaled by $p$:
\[ \Delta^n_p = \left\{ (t_0,\dotsc,t_n) \in \bbR^{n+1} : t_i \geq 0 \ \forall i, \ \sum_i t_i = p \right\}. \]
We define maps $\Delta^k_p \times \Delta^l_q \arr \Delta^{k+l}_{p+q}$ by the rule
\[ (s_0,\dotsc,s_k),(t_0,\dotsc,t_l) \mapsto (s_0,\dotsc,s_{k-1},s_k + t_0,t_1,\dotsc,t_l). \]
Fixing $p$ and $q$ but letting $k$ and $l$ vary over all pairs summing to $n$, these maps are closed embeddings that jointly cover $\Delta^n_{p+q}$ and whose overlaps are generated by face maps, allowing us to express $\Delta^n_{p+q}$ as the coequalizer
\[ \coprod_{k+l = n-1} \Delta^k_p \times \Delta^l_q \rightrightarrows \coprod_{k+l = n} \Delta^k_p \times \Delta^l_q \rightarrow \Delta^n_{p+q}. \]
This decomposition is called the \textbf{prismatic subdivision}.

Given three diagrams $\phi, \gamma, \eta\colon I \to \uncat{\cC}$, we define a composition map
\begin{equation}\label{eq:pris_comp} h\spcat{\cC}(\phi,\gamma) \sma h\spcat{\cC}(\gamma,\eta) \sma (0,\infty)^2_+ \arr h\spcat{\cC}(\phi,\eta) \sma (0,\infty)_+
\end{equation}
that extends continuously to the rest of the Moore end. The essential idea is this: for each value of $p$ and $q$, the two totalizations on the left give systems of maps
\begin{align*}
\Delta^k_p &\arr \spcat{\cC}(\phi(i_0),\gamma(i_k)) \\
\Delta^l_q &\arr \spcat{\cC}(\gamma(j_0),\eta(j_l)).
\end{align*}
For each $n$-tuple $i_0 \arr \cdots \arr i_n$ we construct a map
\[ \Delta^n_{p+q} \arr \spcat{\cC}(\phi(i_0),\eta(i_n)) \]
by splitting up $\Delta^n_{p+q}$ using the prismatic decomposition, and on each term of the form $\Delta^k_p \times \Delta^l_q$, taking the product map and applying the composition in $\spcat{\cC}$
\begin{equation}\label{eq:pris_comp_2} \Delta^k_p \times \Delta^l_q \arr \spcat{\cC}(\phi(i_0),\gamma(i_k)) \sma \spcat{\cC}(\gamma(i_k),\eta(i_n)) \arr \spcat{\cC}(\phi(i_0),\eta(i_n)).
\end{equation}

Granting that this is well-defined and extends to a continuous map (\cref{sec:moore_end_continuity}) on the Moore end, it is not hard to check that it is strictly unital and associative. For associativity we take four diagrams $\phi, \gamma, \eta, \iota\colon I \to \uncat{\cC}$, three real numbers $p,q,r > 0$, and three systems of maps
\begin{align*}
\Delta^k_p &\arr \spcat{\cC}(\phi(i_0),\gamma(i_k)) \\
\Delta^l_q &\arr \spcat{\cC}(\gamma(i_0),\eta(i_l)) \\
\Delta^m_r &\arr \spcat{\cC}(\eta(i_0),\iota(i_m)).
\end{align*}
For each $n$-tuple $i_0 \arr \cdots \arr i_n$, we check that the two maps we can construct
\[ \Delta^n_{p+q+r} \arr \spcat{\cC}(\phi(i_0),\iota(i_n)) \]
agree, using the fact that the two ways of composing prismatic subdivisions lead to the same three-fold subdivision formula
\[ (s_0,\dotsc,s_k),(t_0,\dotsc,t_l),(u_0,\dotsc,u_m) \mapsto (s_0,\dotsc,s_k + t_0,t_1,\dotsc,t_l+u_0,\dotsc,u_m). \]

The proof that this construction is appropriately functorial in $\spcat{\cC}$ and $I$ is long but entirely straightforward, so we omit the details.

\subsection{Continuity of the composition maps}\label{sec:moore_end_continuity}
This is the most demanding part of the proof of \cref{thm:moore_end}. The construction in \eqref{eq:pris_comp_2} defines a continuous map into the image of the following injective map.
\begin{equation}\label{big_nasty_injective}
\begin{aligned}
	(0,\infty)^2_+ \sma \displaystyle\prod_{[n]} F&\left( \Delta^n_+, \displaystyle\prod_{i_0 \arr \cdots \arr i_n} \spcat{\cC}(\phi(i_0),\eta(i_n)) \right) \to
\\
	&(0,\infty)^2_+ \sma \displaystyle\prod_{[n]}\displaystyle\prod_{k+l=n} F\left( (\Delta^k \times \Delta^l)_+, \displaystyle\prod_{i_0 \arr \cdots \arr i_n} \spcat{\cC}(\phi(i_0),\eta(i_n)) \right).
\end{aligned}
\end{equation}
To see that the maps in \eqref{eq:pris_comp_2}  assemble to a continuous map on the source of  \eqref{big_nasty_injective}
it is enough to show that \eqref{big_nasty_injective} is an inclusion (i.e. a homeomorphism onto its image).
It is then straightforward to check that as $n$ varies, our lifts to the source of \eqref{big_nasty_injective} glue together to give a well-defined map of totalizations.

We prove \eqref{big_nasty_injective} is an inclusion by showing it is an equalizer of two maps that restrict the $\Delta^k \times \Delta^l$ to certain faces. We do this by taking the prismatic subdivision with $p = q = 1$, whose coequalizer we call $C$, and taking maps out to get an equalizer system that is not quite the same as \eqref{big_nasty_injective}. (We use \cite[5.3]{strickland_cgwh} to argue that smashing with $(0,\infty)^2_+$ preserves the equalizer.) But this second equalizer system is homeomorphic to \eqref{big_nasty_injective}, using the evident homeomorphism $\pi_{p,q}\colon C \xarr\cong \Delta^n_{p+q}$ that varies continuously in $p$ and $q$.

We must also prove that the multiplication is continuous when one factor is in the strict end times $[0,\infty)$ and the other is in the homotopy end times $(0,\infty)$. When $p = 0$ and $q > 0$, the maps of the prismatic subdivision still make sense, but they do not form a coequalizer system of the form
\begin{equation}\label{not_coequalizer}
	\coprod_{k+l = n-1} \Delta^k \times \Delta^l \rightrightarrows \coprod_{k+l = n} \Delta^k \times \Delta^l \rightarrow \Delta^n_{p+q}.
\end{equation}
In other words, letting $C$ denote the coequalizer of \eqref{not_coequalizer} and $\pi_{p,q}$ the induced map
\[ \pi_{p,q} \colon C \to \Delta^n_{p+q}, \]
then $\pi_{p,q}$ is a quotient map but not a homeomorphism. As a result, the following analog of \eqref{big_nasty_injective} (products suppressed) is not an equalizer of the two maps that restrict $\Delta^k \times \Delta^l$ to the relevant faces.
\begin{equation}\label{bigger_nasty_injective}
\xymatrix{
	[0,\infty)_+ \sma (0,\infty)_+ \sma F\left( \Delta^n_+, A \right) \ar[r] & [0,\infty)_+ \sma (0,\infty)_+ \sma \displaystyle\prod_{k+l = n} F\left( (\Delta^k \times \Delta^l)_+, A \right),
}
\end{equation}
However, our composition rule still lifts to the source of \eqref{bigger_nasty_injective} as a map of sets. So it suffices to prove that \eqref{bigger_nasty_injective} is an inclusion.  This will follow if we can show that the map
\[ \xymatrix{ [0,\infty)_+ \sma (0,\infty)_+ \sma F\left( \Delta^n_+, A \right)
    \arr [0,\infty)_+ \sma (0,\infty)_+ \sma F\left( C_+, A \right) } \] that
sends $(p,q,f\colon \Delta^n \arr A)$ to
$(p,q,f \circ \pi_{p,q}\colon C \arr A)$ is an inclusion. It suffices to prove
this with Cartesian products inside the category of topological spaces because
$k(-)$ preserves closed inclusions by a quick diagram-chase.

Consider the map of Cartesian products
\begin{equation}\label{smaller_nasty_injective}
	[0,\infty) \times (0,\infty) \times F\left( \Delta^n_+, A \right) \arr [0,\infty) \times (0,\infty) \times F\left( C_+, A \right)
\end{equation}
with formula $(p,q,f\colon \Delta^n \arr A) \mapsto (p,q,f \circ \pi_{p,q}\colon C \arr A)$. It suffices to show that the image of the basis element $U \times V \times \{K_i \arr W_i\}$ is relatively open in the image. Take $p_0 \in U$, $q_0 \in V$, $f\colon K_i \arr W_i$. It suffices to find a basis element in the target containing the point $(p_0,q_0,f \circ \pi_{p_0,q_0})$, whose intersection with the image is completely contained in the image of $U \times V \times \{K_i \arr W_i\}$.

To do this, note that $f(K_i) \subseteq W_i$ implies that $f \circ \pi_{p_0,q_0} (\pi_{p_0,q_0}^{-1}(K_i)) \subseteq W_i$. In fact, we can find a small open interval of values of $p$ and $q$ about $p_0$ and $q_0$ such that for $p$ and $q$ in this interval,
\[ f \circ \pi_{p_0,q_0} (\pi_{p,q}^{-1}(K_i)) \subseteq W_i. \]
This is because the set of maps $g\colon C \arr \Delta^n$ such that $g^{-1}(K_i) \subseteq (f \circ \pi_{p_0,q_0})^{-1}(W_i)$ is equal to the set of maps such that $g$ sends the compact set $C - (f \circ \pi_{p_0,q_0})^{-1}(W_i)$ to the open set $\Delta^n - K_i$, which is open in the compact-open topology.

Shrinking these intervals slightly to closed intervals $I$ and $J$, the map
$I \times J \times C \to \Delta^n$ given by $(p,q,x) \mapsto \pi_{p,q}(x)$ is
continuous, therefore the inverse image of $K_i$ is closed---and thus
compact. Its projection to $C$ is then also compact, as is the union of
$\pi_{p,q}^{-1}(K_i)$ over $p \in I$ and $q \in J$ is compact. Call this union
$L_i$. The above implies that
\[ f \circ \pi_{p_0,q_0} (L_i) \subseteq W_i. \]

Shrinking one last time to open intervals $I'$ and $J'$ about $p_0$ and $q_0$, respectively, consider the open set $I' \times J' \times \{L_i \arr W_i\}$ in the target of \eqref{smaller_nasty_injective}. Every triple in this set of the form $(p,q,f' \circ \pi_{p,q})$ has the property that
\[ f' \circ \pi_{p,q}(L_i) \subseteq W_i. \]
Since $\pi_{p,q}^{-1}(K_i) \subseteq L_i$, we deduce that
\[ f'(K_i) \subseteq W_i. \]
So the intersection of $I' \times J' \times \{L_i \arr W_i\}$ with the image of \eqref{smaller_nasty_injective} lies in the image of $U \times V \times \{K_i \arr W_i\}$. Furthermore this basic open set contains the original point $(p_0,q_0,f \circ \pi_{p_0,q_0})$ by construction. Repeating for each $i$ and taking the intersection, we get a basis element in the target of \eqref{smaller_nasty_injective} whose intersection with the image is contained in the source basis element, and contains the prescribed point. This proves that \eqref{smaller_nasty_injective}, and therefore \eqref{bigger_nasty_injective}, is an inclusion.

By a slight variant of this argument (extending to an infinite product by only
ever imposing conditions on finitely many of the factors), we can also take a
product over $n$ before smashing with $[0,\infty)_+ \sma (0,\infty)_+$, giving
an inclusion
\begin{equation}\label{eq:extended_product_prismatic_equalizer}
\xymatrix{
	[0,\infty)_+ \sma (0,\infty)_+ \sma \displaystyle\prod_n F\left( \Delta^n_+, A_n \right) \ar[r] &
	[0,\infty)_+ \sma (0,\infty)_+ \sma \displaystyle\prod_n \displaystyle\prod_{k+l = n} F\left( (\Delta^k \times \Delta^l)_+, A_n \right).
}
\end{equation}
This concludes the most technical part of the proof.

\subsection{Proof that the three statements in \cref{thm:moore_end} hold}

The totalization defining the homotopy end will be Reedy fibrant, and therefore preserve weak equivalences, when the terms of the product (the mapping spectra of $\spcat\cC$) are all fibrant. Alternatively, it also has the correct homotopy type if the category $I$ is finite, in the sense that its classifying space is a finite CW complex. So if $I$ is finite, we proceed; otherwise, we first assume we have taken a pointwise fibrant replacement of $\cC$ using \cref{thm:cofib_repl_spcat} before building the Moore end.
\begin{enumerate}
	\item When $I = *$, the strict equalizer is just $\spcat{\cC}$, and its inclusion into the homotopy equalizer is an equivalence because the cosimplicial object is constant.
	\item It suffices to consider the homotopy end. When $I = I_0 \times [k]$, this is the totalization of the diagonal of the bicosimplicial space
          \[ [m,n] \mapsto \prod_{\stackrel{i_0 \arr \cdots \arr i_m \in
                I_0}{j_0 \arr \cdots \arr j_n \in [k]}}
            \spcat{\cC}(\phi(i_0,j_0),\gamma(i_m,j_n)). \] It is standard that
          this is homeomorphic to the double totalization, which is the homotopy
          end along $[k]$ of the homotopy end along $I_0$. Therefore without
          loss of generality $I_0 = *$. The zig-zag description is then fairly
          standard, see e.g. \cite[\S2.4]{blumberg_mandell_unpublished}. One way
          to prove it is to expand to a larger diagram with equivalent homotopy
          limit; for example, when $k = 2$ the larger diagram is as follows:
	\[ \xymatrix @R=1em @C=1em {
		\spcat{\cC}(\phi(0),\gamma(0)) \ar[rd] && \ar[ld] \spcat{\cC}(\phi(1),\gamma(1)) \ar[rd] && \ar[ld] \spcat{\cC}(\phi(2),\gamma(2)) \\
		& \spcat{\cC}(\phi(0),\gamma(1)) \ar[rd] && \ar[ld] \spcat{\cC}(\phi(1),\gamma(2)) & \\
		&& \spcat{\cC}(\phi(0),\gamma(2))
	}
	\]
	We model this homotopy limit as maps out of a cofibrant replacement of the trivial diagram $*$ sending each $(i,j)$ to $\Delta^{j-i}$, and face maps between them. It is then straightforward to give a homeomorphism between this model of the homotopy limit and the desired homotopy end.

	\item The given condition implies that on two diagrams $\phi$ and $\gamma$ sending every object of $J - I$ to $*$, the map of cosimplicial objects induced by $I \to J$ is a product of identity maps and zero maps, hence is an isomorphism. It therefore induces isomorphisms on the homotopy end and the strict end, and hence an isomorphism on the Moore end as well.
\end{enumerate}

\subsection{2-functoriality}\label{2-functoriality}

To conclude this section we record the behavior of the Moore end with respect to natural transformations. Given two morphisms $F,G\colon (\spcat\cC,\uncat\cC) \rightrightarrows (\spcat\cD,\uncat\cD)$ of spectral categories equipped with base categories, we define a {\bf natural transformation} $\eta\colon F \Rightarrow G$ to be a natural transformation of the underlying functors $\uncat\cC \to \uncat\cD$, subject to the condition that the following square commutes for all objects $a$ and $b$ in $\cC$
\[ \xymatrix{
	\cC(a,b) \ar[d]_-G \ar[r]^-F & \cD(Fa,Fb) \ar[d]^-{\eta(b) \circ -} \\
	\cD(Ga,Gb) \ar[r]_-{- \circ \eta(a)} & \cD(Fa,Gb).
} \]
These make the category $\mathbf{SpCat}$ of pairs $(\spcat\cC,\uncat\cC)$ into a 2-category.

The category of diagrams construction $\Fun(I,\uncat\cC)$ defines a 2-functor $\mathbf{Cat}^\op \times \mathbf{Cat} \arr \mathbf{Cat}$, in other words it respects ordinary natural transformations in both variables. Therefore the Moore end will define a 2-functor if and only if these natural transformations also respect the spectral enrichments. However, this is only true in the $\cC$ variable, and not in the $I$-variable:
\begin{prop}\label{thm:moore_end_2_functorial}
	The Moore end of \cref{thm:moore_end} defines a 2-functor
	\[ \Fun(-,-)\colon \mathbf{Cat}^\op \times \mathbf{SpCat} \arr \mathbf{SpCat} \]
	if $\mathbf{Cat}$ is given the trivial 2-category structure with only identity natural transformations.
\end{prop}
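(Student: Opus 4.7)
The plan is to construct, for each 2-cell $\eta\colon F \Rightarrow G$ in $\mathbf{SpCat}$ and each small category $I$, an induced 2-cell $\Fun(I,\eta)\colon \Fun(I,F) \Rightarrow \Fun(I,G)$ in $\mathbf{SpCat}$, and then verify that the assignment $\eta \mapsto \Fun(I,\eta)$ respects identities, vertical composition, and horizontal composition with functors in $\mathbf{Cat}$ (which now carries only identity 2-cells, so the horizontal compatibility is automatic in the $I$-variable).

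First I would define the underlying natural transformation. Given $\eta\colon F \Rightarrow G$ as in the hypothesis of \cref{thm:moore_end_2_functorial}, for every object $\phi\colon I \arr \uncat\cC$ the component at $\phi$ is the natural transformation $F\phi \Rightarrow G\phi$ whose value at $i \in I$ is $\eta(\phi(i))\colon F\phi(i) \arr G\phi(i)$. Naturality in $i$ follows from naturality of $\eta$ applied to each arrow $\phi(i) \arr \phi(i')$ in $\uncat\cC$; in particular $\Fun(I,\eta)_\phi$ is an object of $\Fun(I,\uncat\cD)(F\phi,G\phi)$, giving the underlying natural transformation of ordinary functors $\Fun(I,F) \Rightarrow \Fun(I,G)$.

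Next I would verify the spectral naturality square. Fix $\phi,\gamma\colon I \rightrightarrows \uncat\cC$; we need to show
\[ \xymatrix{
    \Fun(I,\cC)(\phi,\gamma) \ar[d]_-{\Fun(I,G)} \ar[r]^-{\Fun(I,F)} & \Fun(I,\cD)(F\phi,F\gamma) \ar[d]^-{\Fun(I,\eta)_\gamma \,\circ\, -} \\
    \Fun(I,\cD)(G\phi,G\gamma) \ar[r]_-{-\,\circ\, \Fun(I,\eta)_\phi} & \Fun(I,\cD)(F\phi,G\gamma)
} \]
commutes. By construction the Moore end is functorial in $\cC$ by applying $F$ (resp.\ $G$) termwise to the cobar construction, so at each cosimplicial level $n$ both composites are induced by maps of the factor spectra $\cC(\phi(i_0),\gamma(i_n)) \arr \cD(F\phi(i_0),G\gamma(i_n))$. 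Each such factor map is one of the two composites in the naturality square required of $\eta$ in the definition of a spectral natural transformation, so the two factor maps agree. Compatibility with coface and codegeneracy maps of the cobar construction is then automatic, so the maps agree on totalizations, and since pre- and post-composition with the $\Fun(I,\eta)$ components happen in the $\cD$-coordinate only, they do not touch the $[0,\infty)$-parameter or the prismatic gluing, so the square commutes on the Moore end itself.

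Finally I would check vertical and horizontal composition. Vertical composition of 2-cells in $\mathbf{SpCat}$ is defined pointwise in the underlying category, and $\Fun(I,-)$ on objects postcomposes with this pointwise data, so $\Fun(I,\eta'\eta) = \Fun(I,\eta')\Fun(I,\eta)$ and $\Fun(I,\id_F) = \id_{\Fun(I,F)}$ are immediate from the corresponding identities for ordinary $\Fun(I,-)$. For horizontal composition with a functor of ordinary categories $u\colon J \arr I$, the induced restriction functor $u^{*}\colon \Fun(I,\cC) \arr \Fun(J,\cC)$ restricts diagrams and cosimplicial products strictly along $u$, and $\Fun(u,\eta)$ is just the restriction of the natural transformation $\Fun(I,\eta)$ along $u$, which strictly commutes with this restriction. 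Since $\mathbf{Cat}$ is given only identity 2-cells, there is no further horizontal compatibility to check in the $I$-variable. The main subtlety, already dispensed with in the second paragraph, is simply that the spectral naturality square survives passage to the totalization and Moore-end pushout; everything else is bookkeeping in $\uncat\cC$ and $\uncat\cD$.
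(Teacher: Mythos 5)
The paper explicitly omits this proof as a ``straightforward verification,'' so there is no reference proof to compare against. Your argument is correct and supplies exactly the verification the paper takes for granted: you construct the underlying natural transformation pointwise, observe that the spectral naturality square reduces at each cosimplicial level to the naturality square of $\eta$, and note that the remaining 2-functoriality axioms are formal.

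The one place where I would be slightly more explicit is the claim that composition with the $\Fun(I,\eta)$ components ``does not touch the $[0,\infty)$-parameter or the prismatic gluing.'' What is actually happening is this: the component $\Fun(I,\eta)_\phi$ is the image of a genuine natural transformation under $\Sigma^\infty\Fun(I,\uncat\cD) \to \Fun(I,\cD)$, so it lives in the strict end at Moore parameter $0$. When one factor has parameter $0$, the prismatic subdivision of $\Delta^n_p = \Delta^n_{p+0}$ \emph{degenerates}: all pieces $\Delta^k_p \times \Delta^l_0$ with $l>0$ map into proper faces, and only the piece with $l=0$ (respectively $k=0$ for precomposition) covers the whole simplex. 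That is what makes the level-$n$ component of $\Fun(I,\eta)_\gamma \circ \Fun(I,F)(x)$ equal to $\eta(\gamma(i_n)) \circ F(x_n)$ on the nose, and likewise $\Fun(I,G)(x) \circ \Fun(I,\eta)_\phi$ equal to $G(x_n)\circ\eta(\phi(i_0))$, after which the spectral naturality of $\eta$ finishes the job. Your argument has the right conclusion, and stating this degeneration explicitly would make the ``do not touch the prismatic gluing'' step airtight rather than suggestive.
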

The proof is a straightforward verification and will be omitted.

Although the Moore end fails to preserve natural transformations in the first variable, it does preserve them up to homotopy. To see this, let $\alpha,\beta\colon I \rightrightarrows J$ be a pair of functors of small categories, and let $h\colon \alpha \Rightarrow \beta$ be a natural transformation. Composing with $h$ defines maps
\begin{equation}\label{underlying_module_map}
\xymatrix{ \Fun(I,\cC)(\phi,\gamma\circ\alpha) \ar[r]^-{h \circ -} & \Fun(I,\cC)(\phi,\gamma\circ\beta) }
\end{equation}
for each $\phi\colon I \to \uncat\cC$ and $\gamma\colon J \to \uncat\cC$. The failure of the enriched naturality condition then becomes the following statement: these maps commute with the left action of $\Fun(I,\cC)(-,\phi)$ but fail to commute with the right action of $\Fun(J,\cC)(\gamma,-)$. 

However, we can thicken these spectra so that the map does commute with the right action. Let $[1]$ denote the poset category $\{0 < 1\}$, let $H\colon [1] \times I \to J$ denote the functor encoding $\alpha$, $\beta$, and $h$, and let $c\colon [1] \times I \to I$ denote the projection to $I$. Then the {\bf canonical replacement} of the map \eqref{underlying_module_map} is the zig-zag
	\[ \xymatrix{
		\Fun(I,\cC)(\phi,\gamma\circ\alpha) & \ar[l]_-\sim \Fun([1] \times I,\cC)(\phi\circ c,\gamma\circ H) \ar[r] & \Fun(I,\cC)(\phi,\gamma\circ\beta)
	} \]
	where the maps restrict along the inclusions of $\{0\} \times I$ and $\{1\} \times I$ into
  $[1] \times I$.

\begin{lem}
	For each $\phi\colon I \to \uncat\cC$, the two maps of the canonical replacement each commute with the right $\Fun(J,\cC)$-action. Furthermore the diagram of spectra
	\[ \xymatrix{
		\Fun([1] \times I,\cC)(\phi\circ c,\gamma\circ H) \ar[d]_-\sim \ar[rd] & \\
		\Fun(I,\cC)(\phi,\gamma\circ\alpha) \ar@{-->}[r]^-{h \circ -} & \Fun(I,\cC)(\phi,\gamma\circ\beta)
	} \]
	commutes up to a canonical homotopy.
\end{lem}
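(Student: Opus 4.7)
The plan is to deduce both assertions from the functoriality and zig-zag description established in \cref{thm:moore_end} together with \cref{thm:moore_end_2_functorial}. Write $r_0, r_1 \colon \Fun([1] \times I, \cC) \arr \Fun(I, \cC)$ for the spectral functors obtained by applying $\Fun(-, \cC)$ to the inclusions $i_0, i_1 \colon I \hookrightarrow [1] \times I$ of the two endpoints; these are the two maps of the canonical replacement at the level of enriched functors.

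For the first statement, observe that $H \circ i_0 = \alpha$ and $H \circ i_1 = \beta$ are strict equalities in $\mathbf{Cat}$. By the strict 2-functoriality of \cref{thm:moore_end_2_functorial} in the $\mathbf{Cat}$-variable, this yields strict equalities $r_0 \circ H^* = \alpha^*$ and $r_1 \circ H^* = \beta^*$ of spectral functors $\Fun(J, \cC) \arr \Fun(I, \cC)$. The right $\Fun(J, \cC)$-action on each of the three mapping spectra in the canonical replacement is defined by first applying the appropriate pullback ($\alpha^*$, $\beta^*$, or $H^*$) and then composing in the target spectral category. Since $r_0$ and $r_1$ are spectral functors, they respect composition strictly, so combined with the equalities above this gives the desired compatibility with the right $\Fun(J, \cC)$-action for both maps of the canonical replacement.

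For the homotopy filling the triangle, I would apply the zig-zag description of \cref{thm:moore_end} with $I_0 = I$ and $k = 1$. The diagrams $\phi \circ c$ and $\gamma \circ H$ decompose along $[1]$ as $(\phi, \phi)$ with identity transition and $(\gamma\circ\alpha, \gamma\circ\beta)$ with transition given by whiskering $h$ with $\gamma$, so the zig-zag reduces to
\[ \Fun(I, \cC)(\phi, \gamma \circ \alpha) \xrightarrow{h \circ -} \Fun(I, \cC)(\phi, \gamma \circ \beta) \xleftarrow{\id} \Fun(I, \cC)(\phi, \gamma \circ \beta). \]
The subdiagram-compatibility clause of \cref{thm:moore_end} identifies $r_0$ with the projection to the first vertex and $r_1$ with the projection to the third vertex; since the right leg of the zig-zag is the identity, the Segal-type equivalence from the Moore end to the homotopy limit reduces to the $r_0$-projection, which explains why the left vertical arrow in the lemma is an equivalence. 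A point of the homotopy limit is a triple $(a, b, P)$ with $P$ a path from $h \circ a$ to $b$, and the family of paths $P$ organizes into a single map into a path object of $\Fun(I, \cC)(\phi, \gamma \circ \beta)$ that provides the canonical homotopy between $(h \circ -) \circ r_0$ and $r_1$.

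The main technical obstacle is verifying that the Segal equivalence of \cref{thm:moore_end} genuinely identifies $r_0$ and $r_1$ with the extreme projections of the zig-zag homotopy limit, and that the path datum $P$ assembles into an honest map of spectra into a path object rather than only a collection of paths per point. Both facts are implicit in the construction of the Segal map via the cobar totalization and the Moore-end thickening, so the work amounts to unwinding those totalizations for the cosimplicial object indexed by $[1] \times I$ and matching the restrictions to $\{0\} \times I$ and $\{1\} \times I$ with the outer vertices of the zig-zag; once this bookkeeping is done, the canonical homotopy is just the coordinate in $[0, \infty)$ parametrizing the edge $(0,i) \to (1,i)$ in the Moore end.
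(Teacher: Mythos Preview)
Your proposal is correct and follows essentially the same approach as the paper: the first assertion comes from the fact that the restriction maps $r_0, r_1$ are induced by morphisms in $\mathbf{Cat}$ and hence are spectral functors (so commute with composition on the right by $H^*$, $\alpha^*$, $\beta^*$), and the second assertion comes from applying \cref{thm:moore_end}(\ref{thm:moore_end_zz}) with $I_0 = I$ and $k = 1$ to identify the Moore end with the homotopy limit of the zig-zag $\Fun(I,\cC)(\phi,\gamma\circ\alpha) \xrightarrow{h\circ -} \Fun(I,\cC)(\phi,\gamma\circ\beta) \xleftarrow{=} \Fun(I,\cC)(\phi,\gamma\circ\beta)$, with $r_0$ and $r_1$ becoming the outer projections and the path coordinate supplying the homotopy. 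The paper's own proof is a terse version of exactly this; your additional remarks about matching the Segal map with the endpoint restrictions are the kind of bookkeeping the paper absorbs into the ``More generally'' clause of \cref{thm:moore_end}(\ref{thm:moore_end_zz}).
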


\begin{proof}
	The maps in the canonical replacement arise from morphisms of spectral categories, hence commute with the right $\Fun(J,\cC)$-actions. By \cref{thm:moore_end}(\ref{thm:moore_end_zz}) the first Segal map gives an equivalence from $\Fun([1] \times I,\cC)(\phi\circ c,\gamma\circ H)$ into the homotopy limit of the zig-zag
	\[ \xymatrix{ \Fun(I,\cC)(\phi,\gamma\circ\alpha) \ar@{-->}[r]^-{h \circ -}  & \Fun(I,\cC)(\phi,\gamma\circ\beta) & \ar[l]_-{=} \Fun(I,\cC)(\phi,\gamma\circ\beta). } \]
	Along this equivalence, the map to $\Fun(I,\cC)(\phi,\gamma\circ\alpha)$ is the restriction to the first term and the map to $\Fun(I,\cC)(\phi,\gamma\circ\beta)$ is the restriction to the last term. The homotopy limit then provides the canonical homotopy.
\end{proof}

\section{Spectral Waldhausen categories and the $S_\bullet$ construction}\label{sec:SWC_and_Sdot}
\subsection{Spectral Waldhausen categories}\label{ssec:sp_wald}
We begin by defining the notion of a Waldhausen category with a compatible spectral enrichment---these are the input data for our version of the Dennis trace constructed in \S\ref{sec:dennis_trace}. Recall that a \textbf{Waldhausen category} is a category $\uncat{\cC}$ equipped with subcategories of cofibrations and weak equivalences such that
\begin{enumerate}
	\item every isomorphism is both a cofibration and a weak equivalence,
	\item there is a zero object $*$ and every object is cofibrant,
	\item $\uncat{\cC}$ has all pushouts along cofibrations (homotopy pushouts),
	\item the pushout of a cofibration is a cofibration, and
	\item  a weak equivalence of homotopy pushout diagrams induces a weak equivalence of pushouts.
\end{enumerate}
An \textbf{exact functor} $\uncat{\cC} \to \uncat{\cD}$ is a functor preserving the zero object, cofibrations, weak equivalences, and pushouts along cofibrations.

\begin{defn}\label{def:spectrally_enriched_waldhausen_category}
  A \textbf{spectral Waldhausen category} is a spectral category $\cC$
  with a base category $\uncat\cC$ that is equipped with a Waldhausen category structure. This data is subject to the following three conditions:
\begin{enumerate}
	\item\label{def:spectrally_enriched_waldhausen_category_zero} The zero object of $\uncat{\cC}$ is also a zero object for $\spcat{\cC}$.
	\item\label{def:spectrally_enriched_waldhausen_category_we} Every weak equivalence $c \arr c'$ in $\uncat{\cC}$ induces stable equivalences
	\[ \spcat{\cC}(c',d) \overset\sim\arr \spcat{\cC}(c,d), \qquad \spcat{\cC}(d,c) \overset\sim\arr \spcat{\cC}(d,c'). \]
	\item\label{def:spectrally_enriched_waldhausen_category_pushout} For every pushout square in $\uncat{\cC}$ along a cofibration
	\[ \xymatrix @R=1.5em{
		a \, \ar[d] \ar@{^(->}[r] & b \ar[d] \\
		c \,\ar@{^(->}[r] & d
	} \]
	and object $e$, the resulting two squares of spectra
	\[ \xymatrix @R=1.5em{
		\spcat{\cC}(a,e) \ar@{<-}[d] \ar@{<-}[r] & \spcat{\cC}(b,e) \ar@{<-}[d] \\
		\spcat{\cC}(c,e) \ar@{<-}[r] & \spcat{\cC}(d,e)
	}
	\qquad
	\xymatrix @R=1.5em{
		\spcat{\cC}(e,a) \ar[d] \ar[r] & \spcat{\cC}(e,b) \ar[d] \\
		\spcat{\cC}(e,c) \ar[r] & \spcat{\cC}(e,d)
	} \]
	are homotopy pushout squares.
      \end{enumerate}

      Let $\mathbf{SpWaldCat}$ be the category whose objects are spectral Waldhausen categories $(\spcat\cC,\uncat\cC)$. When it is clear from context, we omit $\uncat\cC$ from the notation. A morphism $(\spcat{\cC},\uncat{\cC}) \arr (\spcat{\cD},\uncat{\cD})$ consists of an exact functor
      $\uncat{F}: \uncat{C} \arr \uncat{D}$ and a spectral functor
      $\spcat{F}: \spcat{\cC} \arr \spcat{\cD}$ such that the diagram
      \[\xymatrix{
          \Sigma^\infty \uncat\cC \ar[r]^{\Sigma^\infty \uncat{F}} \ar[d] & \Sigma^\infty \uncat\cD
          \ar[d] \\
          \spcat{\cC} \ar[r]^{\spcat{F}}& \spcat{\cD}
        }\]
commutes.

    \end{defn}

\begin{example} \label{ex:Perf_as_spectral_wald_cat}
	The spectral categories $\tensor[^A]{\Perf}{}$ of perfect $A$-modules and
        $\tensor[^A]{\Mod}{}$ of all $A$-modules are both spectral Waldhausen
        categories. If desired, we can restrict along $\bbN$ and take the underlying category to be the category of cofibrant orthogonal $A$-module spectra (respectively, those that are perfect). The proof of the above three conditions crucially uses our convention in \cref{modules_convention} that the mapping
        spectra in $\tensor[^A]{\Mod}{}$ are derived using EKMM spectra. Generalizing
        this, in any spectrally enriched model category with all objects fibrant, the
        subcategory of cofibrant objects is a spectral Waldhausen category.
\end{example}

\begin{example} \label{ex:BM} If $\uncat{\cC}$ is a simplicially enriched
  Waldhausen category in the sense of \cite{blumberg_mandell_unpublished} then
  the spectral enrichment $\cC^\Gamma$ from
  \cite[2.2.1]{blumberg_mandell_unpublished} is compatible with the Waldhausen
  structure in our sense. The same is true for the non-connective enrichment
  $\cC^{\mc S}$ from \cite[2.2.5]{blumberg_mandell_unpublished} if $\uncat{\cC}$
  is enhanced simplicially enriched.
\end{example}

Recall the spectral category $\Fun(I, \cC)$ of $I$-diagrams in $\cC$ from \S\ref{ex:fun_cat}. If $\cC$ is a spectral Waldhausen category, then we can give $\Fun(I, \uncat\cC)$ the levelwise Waldhausen structure.
In other words, given functors $\phi,\gamma: I \to \uncat\cC$, a natural
transformation $\alpha \colon \phi \Rightarrow \gamma$ is a cofibration (resp. weak equivalence) if for every
$i\in \ob I$, the morphism $\alpha(i) \colon \phi(i) \to \gamma(i)$ is a cofibration (resp. weak equivalence) in $\uncat\cC$.

\begin{prop}\label{prop:fun_cat_wald}
  The functor $\Fun(-,-)$ of \cref{thm:moore_end} respects Waldhausen structures in the second entry, defining a functor
  \begin{align*}
    \mathbf{Cat}^\op \times \mathbf{SpWaldCat} &\arr \mathbf{SpWaldCat} \\
    (I, (\cC, \uncat\cC)) &\longmapsto (\Fun(I, \cC), \Fun(I, \uncat\cC)),
  \end{align*}
  where $\Fun(I, \uncat\cC)$ has the levelwise Waldhausen structure.
\end{prop}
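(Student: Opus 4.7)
The plan divides naturally into verifying two things: first, that $\Fun(I,\uncat\cC)$ is itself a Waldhausen category under the levelwise structure; second, that the spectral category $\Fun(I,\cC)$ satisfies the three compatibility axioms of \cref{def:spectrally_enriched_waldhausen_category} relative to this base. For the underlying Waldhausen structure, each of the five axioms concerns objects, arrows, or pushouts that are determined pointwise in $I$, so they transfer axiom-by-axiom from $\uncat\cC$ once one observes that pushouts in $\Fun(I,\uncat\cC)$ along levelwise cofibrations are computed pointwise, and that the gluing lemma in $\uncat\cC$ applies one index at a time. Functoriality in $\cC$ (by postcomposition with an exact functor) and in $I$ (by precomposition) is clear, since exactness of the induced functor $\Fun(I,\uncat F)$ again checks pointwise.

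For the three spectral axioms, I would work with the cobar description of the Moore end. After pointwise fibrant replacement of $\cC$ (which by \cref{thm:cofib_repl_spcat} does not change the homotopy type), the Moore end is equivalent to the totalization of the Reedy fibrant cobar cosimplicial spectrum, and in particular computes a genuine homotopy limit of the mapping spectra $\cC(\phi(i_0),\gamma(i_n))$ indexed over composable tuples in $I$. Axiom~(1) is then immediate: when either $\phi$ or $\gamma$ is the constant zero diagram, every factor in the cobar is contractible by the zero-object condition on $\cC$, so the totalization is contractible. Axiom~(2) follows because a levelwise weak equivalence of diagrams induces factor-wise stable equivalences on the cobar by axiom~(2) for $\cC$, these assemble into a levelwise weak equivalence of Reedy fibrant cosimplicial spectra, and totalization preserves such.

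The main obstacle is axiom~(3). Given a levelwise pushout-of-cofibration square in $\Fun(I,\uncat\cC)$ and another diagram $e$, axiom~(3) for $\cC$ tells us that for each fixed index $i_0$ the square of spectra $\cC(-,e(i_0))$ (respectively $\cC(e(i_0),-)$) is a homotopy pushout. By stability of orthogonal spectra, such a square is equivalently a homotopy pullback, and homotopy pullbacks commute with the homotopy limit that defines the Moore end; hence the resulting square of mapping spectra in $\Fun(I,\cC)$ is a homotopy pullback, and therefore again a homotopy pushout by stability. The subtle point requiring care is uniformity across the composable tuples $i_0 \to \cdots \to i_n$: one applies the pointwise hypothesis at each $i_0$ and must verify that the coface and codegeneracy maps of the cobar are compatible with the pushout/pullback structure, so that the entire cosimplicial square is a levelwise homotopy pullback and passes to the totalization. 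This is the one place where the ad hoc construction of the Moore end must be reconciled with the abstract behavior of homotopy (co)limits, and is where I expect the bulk of the work to lie.
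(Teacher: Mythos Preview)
Your approach is essentially the paper's: fibrantly replace $\cC$, then verify each of the three axioms of \cref{def:spectrally_enriched_waldhausen_category} by checking that the cobar cosimplicial spectrum has the desired property at every level and passing to the totalization (hence the Moore end). Two small points deserve tightening.

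For axiom~(1), you conclude only that the Moore end is \emph{contractible}, but the axiom demands that $*$ be a strict zero object for the enrichment, i.e.\ that $\Fun(I,\cC)(*,\gamma)$ and $\Fun(I,\cC)(\phi,*)$ be isomorphic to the zero spectrum. The zero-object condition on $\cC$ gives $\cC(*,\gamma(i_n)) \cong *$ on the nose, so every cosimplicial level is a product of zero spectra, the totalization is zero, and the Moore end pushout collapses to zero as well; this is what the paper records. For axiom~(3), the ``uniformity'' worry you flag is not where the work lies: the pushout square in $\Fun(I,\uncat\cC)$ is a square of diagrams, so applying $\cC(-,e(i_n))$ (or $\cC(e(i_0),-)$) and taking products over tuples yields a genuine square of cosimplicial spectra by functoriality, with no compatibility to check. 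Since it is a homotopy pullback at each cosimplicial level and the objects are Reedy fibrant, it is a homotopy pullback on totalizations; the paper disposes of this in one sentence.
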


\begin{proof}
	The axioms in \cref{def:spectrally_enriched_waldhausen_category} are invariant under pointwise equivalence, so whenever necessary we may fibrantly replace the mapping spectra of $\spcat\cC$ without changing the fact that it is a spectral Waldhausen category. Then we build the Moore end and verify the three axioms of a spectrally enriched Waldhausen category.
	\begin{enumerate}
		\item When $\phi = *$ is the zero diagram, every term in the cosimplicial spectrum is a product of zero spectra $\spcat{\cC}(*,\gamma(i_n))$, so the totalization and therefore the Moore end is also isomorphic to zero. The same argument works when $\gamma = *$.
		\item An equivalence of diagrams $\phi \to \phi'$ induces a product of equivalences
		\[\spcat{\cC}(\phi'(i_0),\gamma(i_n)) \arr \spcat{\cC}(\phi(i_0),\gamma(i_n))\] at each cosimplicial level, giving an equivalence on totalizations, and the same argument works in the variable $\gamma$.
		\item Similarly, when we put a pushout square along a cofibration in one of the variables, the resulting square of cosimplicial spectra is a homotopy pullback at each cosimplicial level, hence a homotopy pullback on the totalizations.
	\end{enumerate}
\end{proof}

The following is immediate from the definition, but will be important later for
defining the $S_\bullet$-construction:
\begin{lem} \label{lem:restrict_cofib}
	Let $(\spcat\cC,\uncat\cC)$ be a spectral Waldhausen category, and let
	$\uncat\cC'$ be a Waldhausen category with the same objects, morphisms, and
	 weak equivalences as $\uncat\cC$, and whose cofibrations are a subcategory
	of the cofibrations of $\uncat\cC$.  Then $(\spcat\cC, \uncat\cC')$ is also a
	spectral Waldhausen category.
\end{lem}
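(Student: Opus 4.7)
The plan is to verify the three axioms of \cref{def:spectrally_enriched_waldhausen_category} for the pair $(\spcat{\cC},\uncat{\cC}')$, using that each axiom either involves no cofibration data at all, or only becomes \emph{weaker} when we restrict the class of cofibrations. First I would observe that the base-category data transfers trivially: since $\uncat{\cC}'$ has the same objects and morphisms as $\uncat{\cC}$, we have an equality $\Sigma^\infty \uncat{\cC}' = \Sigma^\infty \uncat{\cC}$ of spectral categories, and so the existing spectral functor $\Sigma^\infty \uncat{\cC} \to \spcat{\cC}$ serves as a base-category structure for $\uncat{\cC}'$.

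Next I would check axiom \eqref{def:spectrally_enriched_waldhausen_category_zero}. Since $\uncat{\cC}'$ is assumed to be a Waldhausen category with the same underlying ordinary category as $\uncat{\cC}$, the zero object of $\uncat{\cC}'$ is simultaneously a zero object of $\uncat{\cC}$ (the zero object is intrinsic to the underlying category), and hence already a zero object for $\spcat{\cC}$ by assumption. Axiom \eqref{def:spectrally_enriched_waldhausen_category_we} is immediate because $\uncat{\cC}'$ and $\uncat{\cC}$ share the same class of weak equivalences, so the stable equivalences of mapping spectra required for $\uncat{\cC}'$ are exactly those already known for $\uncat{\cC}$.

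The last step is axiom \eqref{def:spectrally_enriched_waldhausen_category_pushout}. Suppose we are given a pushout square in $\uncat{\cC}'$ along an $\uncat{\cC}'$-cofibration. By hypothesis, every $\uncat{\cC}'$-cofibration is a cofibration in $\uncat{\cC}$; and since $\uncat{\cC}'$ and $\uncat{\cC}$ have the same underlying category, a pushout diagram in $\uncat{\cC}'$ is in particular a pushout diagram in $\uncat{\cC}$. Hence this square is also a pushout in $\uncat{\cC}$ along a cofibration of $\uncat{\cC}$, so the induced squares of mapping spectra are homotopy pushouts by the pushout axiom for $(\spcat{\cC},\uncat{\cC})$.

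There is no real obstacle here; the only thing worth flagging is that one should not be misled into looking for an independent verification for $\uncat{\cC}'$, since the content is simply that restricting the cofibrations shrinks the class of squares to check while leaving the spectral data and the remaining Waldhausen data unchanged. Together the three items above give the required spectral Waldhausen category structure on $(\spcat{\cC},\uncat{\cC}')$.
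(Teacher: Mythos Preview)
Your argument is correct and is exactly the unpacking of ``immediate from the definition'' that the paper has in mind: each axiom either ignores cofibrations or only becomes easier when the class of cofibrations shrinks. There is nothing to add.
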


\subsection{The $S_\bullet$ construction}\label{sec:s_dot}
In this section we recall the $S_\bullet$ construction for Waldhausen categories and describe how to extend it to spectral Waldhausen categories.

\begin{defn}\label{ex:spec_w_bullet}
  Let $[k]$ denote the poset $\{0 \to 1 \to \dotsm \to k\}$, considered as a
  category.
  Let $(\spcat \cC, \uncat\cC)$ be a spectral Waldhausen category.  Write
  $w_{k} \uncat{\cC}$ for the full subcategory of $\Fun([k], \uncat{\cC})$ spanned by
  the functors that take each morphism in $[k]$ to a weak equivalence in
  $\uncat\cC$, and write $w_k\spcat\cC$ for the full subcategory of the spectral category
  $\Fun([k],\spcat\cC)$ spanned by the objects of $w_k\uncat\cC$.  Then
  $(w_k\spcat{\cC}, w_k\uncat{\cC})$ is a spectral Waldhausen category.  By the functoriality of the Moore end in the $I$ coordinate (\cref{thm:moore_end}), as $k$ varies this
  forms a simplicial object $w_{\bullet}$ in $\mathbf{SpWaldCat}$.
\end{defn}

\begin{lem}\label{w_bullet_invariance}
  The iterated degeneracy map $w_0\spcat{\cC} \to w_k\spcat{\cC}$ is a
  Dwyer--Kan equivalence of spectral categories.  In particular,
  the spectral categories $w_k\spcat{\cC}$ are all canonically Dwyer--Kan
  equivalent to $\spcat{\cC}$.
\end{lem}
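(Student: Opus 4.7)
The plan is to verify the two conditions for a Dwyer--Kan equivalence: that the iterated degeneracy induces stable equivalences on all mapping spectra (the DK-embedding property), and that it is essentially surjective on the homotopy category (whose morphism sets are $\pi_0$ of the mapping spectra). Both steps reduce to \cref{thm:moore_end}(\ref{thm:moore_end_zz}) combined with the weak-equivalence axiom (\ref{def:spectrally_enriched_waldhausen_category_we}) of \cref{def:spectrally_enriched_waldhausen_category}.

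First I would check the DK-embedding property. The iterated degeneracy sends $c$ to the constant diagram $s_k c$ with all identity transitions. Applying \cref{thm:moore_end}(\ref{thm:moore_end_zz}) with $I_0 = *$ identifies $\Fun([k],\spcat{\cC})(s_k c, s_k c')$ with the homotopy limit of a zig-zag in which every term is $\spcat{\cC}(c,c')$ and every structure map is the identity, since each such map is either precomposition or postcomposition with an identity morphism of the constant diagrams. The diagonal map from $\spcat{\cC}(c,c')$ into this homotopy limit is then a stable equivalence, and under the Segal equivalence it agrees with the map induced by the iterated degeneracy. Hence the induced map on mapping spectra is a stable equivalence.

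Next, essential surjectivity. For an arbitrary object $\psi = (c_0 \xrightarrow{\sim} c_1 \xrightarrow{\sim} \cdots \xrightarrow{\sim} c_k)$ of $w_k\spcat{\cC}$, I define a natural transformation $\eta\colon s_k c_0 \Rightarrow \psi$ whose $i$-th component is the composite weak equivalence $c_0 \xrightarrow{\sim} c_i$. This $\eta$ is a levelwise weak equivalence in $\Fun([k],\uncat{\cC})$, and via \cref{prop:fun_cat_wald} and \cref{lem:restrict_cofib} the subcategory $w_k\uncat{\cC}$ inherits a Waldhausen structure making $(w_k\spcat{\cC}, w_k\uncat{\cC})$ a spectral Waldhausen category in which $\eta$ is a weak equivalence. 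Axiom (\ref{def:spectrally_enriched_waldhausen_category_we}) then implies that pre- and post-composition with $\eta$ induce stable equivalences of mapping spectra in $w_k\spcat{\cC}$, so the class of $\eta$ is invertible in the homotopy category and $\psi$ is isomorphic there to $s_k c_0$. Combining this DK-equivalence with the pointwise equivalence $\spcat{\cC} \simeq \Fun(*,\spcat{\cC}) = w_0\spcat{\cC}$ of \cref{thm:moore_end}(\ref{thm:moore_end_we}) yields the ``in particular'' statement.

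The main point of care is the identification, in the DK-embedding step, of the map induced by the iterated degeneracy with the diagonal inclusion into the constant-zig-zag homotopy limit. This I expect to require chasing through the precise form of the Segal equivalence constructed in the proof of \cref{thm:moore_end}(\ref{thm:moore_end_zz}), but no new ideas beyond that should be needed.
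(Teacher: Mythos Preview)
Your argument is correct and follows essentially the same route as the paper: use \cref{thm:moore_end}(\ref{thm:moore_end_zz}) to see the degeneracy is a Dwyer--Kan embedding, then use axiom (\ref{def:spectrally_enriched_waldhausen_category_we}) to show any string of weak equivalences is isomorphic in the homotopy category to the constant string at its first term, and finish with \cref{thm:moore_end}(\ref{thm:moore_end_we}). The paper is somewhat terser (it says each single degeneracy is a DK embedding and composes), and the invocation of \cref{lem:restrict_cofib} is unnecessary here since you are not restricting cofibrations but only passing to a full subcategory---the spectral Waldhausen structure on $w_k\cC$ is already asserted in \cref{ex:spec_w_bullet}.
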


\begin{proof}
  It follows from \cref{thm:moore_end}.\ref{thm:moore_end_zz} that each degeneracy functor is a Dwyer--Kan embedding. Every string of weak equivalences is equivalent to a string of identity maps, and by
  \cref{def:spectrally_enriched_waldhausen_category}.\ref{def:spectrally_enriched_waldhausen_category_we}
  this is also an isomorphism in the homotopy category coming from the spectral
  enrichment. Therefore each degeneracy functor is also a Dwyer--Kan equivalence.  The last
  statement of the lemma follows because $w_0 \spcat{\cC}$ is pointwise
  equivalent to $\spcat{\cC}$ by \cref{thm:moore_end}.\ref{thm:moore_end_we}.
\end{proof}

\begin{defn}\label{ex:spec_s_dot}  We write $(i \leq j)$, $(i = j)$, or $(i \geq j)$ for an object $(i, j)$ of $[k] \times [k]$ according to which of the given relations holds.
	If $\uncat{\cC}$ is a Waldhausen category, let $S_k \uncat{\cC}$ denote the full subcategory of $\Fun([k] \times [k], \uncat{\cC})$ whose objects are the functors which
	\begin{itemize}
		\item take each object of the form $(i \geq j)$ to the zero object $* \in \ob \uncat\cC$,
		\item take each arrow $(i \leq j) \arr (i \leq j+1)$ to a cofibration, and
		\item take each square of the form
		\[ \xymatrix @R=1.5em{
			(i \leq j) \ar[d] \ar[r] & (i \leq \ell) \ar[d] \\
			(i  = i) \ar[r] & (j \leq \ell)
		} \]
		to a pushout square (along a cofibration).
	\end{itemize}
	By the first condition, the structure of an object of $S_{k}\uncat\cC$ is encoded by a functor on the category of arrows $\Arr[k]$. The other two conditions tell us that the functor gives a sequences of cofibrations
	\[{*} \arr a_1 \arr a_2 \arr \dotsm \arr a_k \]
	in $\uncat\cC$, along with the grid formed by their quotients, as in the usual $S_\bullet$ construction \cite[\S1.3]{1126}.

	If $(\spcat{\cC}, \uncat{\cC})$ is a spectral Waldhausen category, let
        $S_k\spcat{\cC}$ denote the full subcategory of
        $\Fun([k] \times [k], \cC)$ spanned by the objects of $S_k \uncat{\cC}$. This is a spectral
        Waldhausen category, and by \cref{lem:restrict_cofib} continues to be so
        when we restrict the class of cofibrations on the base category
        $S_k \uncat{\cC}$ to match the usual ones (described by the hypotheses of Lemma 1.1.3 in \cite{1126}).  As $k$
        varies, the pairs $(S_k\spcat{\cC}, S_k\uncat{\cC})$ define a simplicial object $S_{\bullet}\spcat\cC$ in
        $\mathbf{SpWaldCat}$.
\end{defn}

\begin{rmk}
  By \cref{thm:moore_end}.\ref{thm:moore_end_iso}, there is an isomorphism of spectral categories
  \[S_1 \spcat{\cC} \cong \spcat{\Fun(*,\cC)}.\] More
  generally, we get the same spectral enrichment on $S_k\spcat{\cC}$ if we trim
  down the category $[k] \times [k]$ by removing the objects 0 and $k$ from the
  first and second copies of $[k]$, respectively. However the spectral
  enrichment using $\Arr[k]$ in place of $[k] \times [k]$ is not isomorphic, or even equivalent, to ours. The extra zero objects provide the mapping
  spectra with important nullhomotopies.
\end{rmk}

\begin{defn}\label{ex:spec_iterated_s_dot} The spectral category
  $w_{k_0}S^{(n)}_{k_1, \dotsc, k_n} \spcat{\cC} \coloneqq w_{k_0}S_{k_1}\cdots
  S_{k_n}\spcat{\cC}$ is defined by taking the full subcategory of the spectral
  category
  \[ \spcat{\Fun([k_0] \times [k_1]^2 \times \dotsm \times [k_n]^2, \cC)} \]
  spanned by those functors
  \[ [k_0] \times [k_1]^2 \times \dotsm \times [k_n]^2 \arr \uncat\cC \]
  satisfying the condition determined by iterating the $S_\bullet$ construction. As
  in \cref{ex:spec_s_dot}, we then restrict the cofibrations on the base
  category to match those that occur in the iterated $S_\bullet$-construction;
  as before, by \cref{lem:restrict_cofib} the result is a well-defined spectral
  Waldhausen category.
  By the functoriality of the Moore end in the $I$ coordinate (\cref{thm:moore_end}), as the indices $k_i$ vary the construction defines an
  $(n+1)$-fold multisimplicial object $w_{\bullet} S^{(n)}_{\bullet} \spcat{\cC}$ in $\mathbf{SpWaldCat}$.
\end{defn}

\section{Symmetric spectra built from multisimplicial sequences}\label{sec:properness}

Recall that for an ordinary Waldhausen category $\uncat\cC$, the algebraic $K$-theory $K(\uncat\cC)$ is the symmetric spectrum that at level $n$ is the geometric realization of the multisimplicial set
	\begin{equation}\label{eq:k-theory_def}
	K(\uncat\cC)_{n} = \bigl| \ob w_{\bullet} S^{(n)}_{\bullet} \uncat{\cC} \bigr|.
\end{equation}
The $\Sigma_n$-actions permute the  $S_{\bullet}$ terms and the spectrum structure maps come from the isomorphisms
\[ w_{k_0}S^{(n)}_{k_1,\dotsc,k_n} \uncat{\cC} \cong w_{k_0}S^{(n+1)}_{k_1,\dotsc,k_n,1} \uncat{\cC}. \]
In this section we formalize this method of creating a symmetric spectrum from a sequence of multisimplicial objects. As a result, we can apply the same process to $\THH(w_\bullet S^{(n)}_\bullet \cC)$ for a spectral Waldhausen category $\cC$. This is an essential maneuver in the construction of the Dennis trace in \S\ref{sec:dennis_trace}.

We also prove a properness theorem (\cref{properness-thm}), stating that the formation of symmetric spectra as in \eqref{eq:k-theory_def} can be left-derived so that it is always homotopy invariant. In other words, for such an object we can always make all of the multisimplicial objects proper, without losing the structure that forms their realizations into a symmetric spectrum. This result makes our model of the Dennis trace more robust, because it is insensitive to which point-set model of $\THH$ we use (see \cref{rmk:properness_for_zigzag}).

Let $\mc I$ be the category with one object $\underline{n} = \{1,\dotsc,n\}$ for every integer $n \geq 0$, where $\underline{0} = \emptyset$, and morphisms the injective maps of finite sets $\underline{m} \arr \underline{n}$ (which need not preserve order). For each $n \geq 0$ let $\mathbf\Delta^{\op \times n}$ be the $n$-fold product of the usual simplicial indexing category; that is, the opposite of the category of nonempty totally ordered finite sets
\[[k] = \{0 \to \dotsm \to k\}\]
 for $k \geq 0$ and order-preserving functions.

For each morphism $f\colon \underline{m} \arr \underline{n}$ of $\mc I$ there is a functor $f_*\colon \mathbf\Delta^{\times m} \arr \mathbf\Delta^{\times n}$ taking
\[([k_1],\dotsc,[k_m])\]
 to the $n$-tuple whose value at $f(i)$ is $[k_i]$ and whose value outside the image of $f$ is $[1]$.
In particular, when $m = n$ this gives an action of the symmetric group $\Sigma_n$ on $\mathbf\Delta^{\op \times n}$. This rule forms a strict diagram of categories indexed by $\mc I$. Therefore we may take its Grothendieck construction $\mc I \int \mathbf\Delta^{\op\times -}$, a category whose objects are tuples $(m;k_1,\dotsc,k_m)$ and whose morphisms consist of an injection $f\colon \underline{m} \arr \underline{n}$ in $\mc I$ and a morphism
\[(\phi_i) \colon f_*([k_1],\dotsc,[k_m]) \to ([l_1],\dotsc,[l_n])\] in $\mathbf\Delta^{\op\times n}$.

\begin{defn}\label{sigma-delta-diagram}
	Given a pointed category $\mc M$, a \textbf{$\Sigma_{\mathbf\Delta}$-diagram} in $\mc M$ is a functor $X_{(\bullet;\bullet,\dotsc,\bullet)}$ from the Grothendieck construction $\mc I \int \mathbf\Delta^{\op\times -}$ to $\mc M$ satisfying:
	\begin{itemize}
		\item $X_{(n;k_1,\dotsc,k_n)} \cong *$ any time $k_i = 0$ for at least one $i$, and
		\item the morphisms $(m;k_1,\dotsc,k_m) \arr (n;f_*(k_1,\dotsc,k_m))$ with every $\phi_i = \id$ induce isomorphisms
		\[ X_{(m;k_1,\dotsc,k_m)} \cong X_{(n;f_*(k_1,\dotsc,k_m))}. \]
	\end{itemize}
\end{defn}

Let $\mc M$ be a simplicially tensored pointed model category.  Under these assumptions, we may take the geometric realization of a simplicial object in $\mc M$, symmetric spectrum objects in $\mc M$ are well-defined, and multisimplicial objects in $\mc M$ have a Reedy model structure. The most important example in \S\ref{sec:dennis_trace} is when $\mc M$ is the category of simplicial objects in orthogonal spectra, the extra simplicial direction accommodating the $w_\bullet$ construction.

We define a functor from $\Sigma_{\mathbf\Delta}$-diagrams $X_{(\bullet;\bullet,\dotsc,\bullet)}$ to symmetric spectrum objects in $\mc M$ by taking level $n$ to be the geometric realization $|X_{(n;\bullet,\dotsc,\bullet)}|$. To give the action of the indexing category $\Sigma_S$ for symmetric spectra \cite[4.2]{mandell_may_shipley_schwede}, we observe that each injective map $f\colon \underline{m} \arr \underline{n}$ produces a map
\[ \Sigma^{\underline{n} - f(\underline{m})} |X_{(m;\bullet,\dotsc,\bullet)}| \arr |X_{(n;\bullet,\dotsc,\bullet)}| \]
by identifying the left-hand side with the realization of the sub-object of the right-hand side in which every index $l_i$ with $i \not\in f(\underline{m})$ is restricted to the values of 0 and 1, while the indices in $f(\underline{m})$ take all values. We simply call this operation $|-|$, since it amounts to taking realization and then making some observations. The proof of the following is pure bookkeeping.

\begin{lem}\label{realization_is_symmetric_spectrum}
	The above gives a well-defined functor $|-|$ from $\Sigma_{\mathbf\Delta}$-diagrams in $\cM$ to symmetric spectrum objects in $\cM$.
\end{lem}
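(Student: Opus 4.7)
The goal is to check that the functor $|-|$ from $\Sigma_{\mathbf\Delta}$-diagrams to symmetric spectrum objects in $\cM$ is well-defined, which means producing for each $\Sigma_{\mathbf\Delta}$-diagram $X$ a symmetric spectrum $|X|$ together with verifying naturality. Recall from \cite[4.2]{mandell_may_shipley_schwede} that a symmetric spectrum in $\cM$ is equivalently a pointed functor $\Sigma_S \arr \cM$, where $\Sigma_S$ is the category whose objects are the finite sets $\underline n$, whose morphisms $\underline m \arr \underline n$ are pairs of an injection $f\colon \underline m \hookrightarrow \underline n$ together with a pointed map $S^{\underline n - f(\underline m)} \arr (\text{something})$. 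So my plan is to exhibit, for each morphism $f\colon \underline m \hookrightarrow \underline n$ in $\mc I$, a natural structure map
\[ \Sigma^{\underline n - f(\underline m)} |X_{(m;\bullet,\dots,\bullet)}| \arr |X_{(n;\bullet,\dots,\bullet)}|, \]
and to check the two compatibilities: composition in $\mc I$ and the suspension coordinates.

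The core technical input is the following identification. Call an index $i \in \underline n$ \emph{collapsed} (with respect to $f$) if $i \not\in f(\underline m)$. Consider the sub-multisimplicial object $Y$ of $X_{(n;\bullet,\dots,\bullet)}$ obtained by restricting each collapsed index $l_i$ to lie in $\{[0],[1]\}$. Since $X$ is a $\Sigma_{\mathbf\Delta}$-diagram, $Y_{(l_1,\dots,l_n)} = *$ whenever any collapsed $l_i = [0]$, so in each collapsed direction the realization contributes a factor of $|\Delta^1/\partial\Delta^1| = S^1$. The second bullet in \cref{sigma-delta-diagram} further supplies a natural isomorphism between the ``non-collapsed part'' of $Y$ and $X_{(m;\bullet,\dots,\bullet)}$. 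Combining these observations and commuting realizations (which commute with smash products of simplicial directions by standard properties of realization in a simplicially tensored category), one obtains a canonical isomorphism of the realization of $Y$ with $\Sigma^{\underline n - f(\underline m)}|X_{(m;\bullet,\dots,\bullet)}|$; the structure map is then the inclusion of this realized subobject into $|X_{(n;\bullet,\dots,\bullet)}|$.

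With the structure maps in hand, I would verify functoriality in two pieces. First, for composable injections $f\colon \underline m \hookrightarrow \underline n$ and $g\colon \underline n \hookrightarrow \underline p$, both ways of combining the structure maps correspond to restricting the relevant indices of $X_{(p;\bullet,\dots,\bullet)}$ to $\{[0],[1]\}$ in identically the same set of directions (namely $\underline p - gf(\underline m)$), and the resulting suspension coordinates are identified via the canonical associativity $\Sigma^A \Sigma^B \cong \Sigma^{A \sqcup B}$. Second, when $f$ is a bijection, the $\Sigma_{\mathbf\Delta}$-diagram data supply the required $\Sigma_n$-action on $|X_{(n;\bullet,\dots,\bullet)}|$, and $f$ induces a structure map that collapses no suspension directions and agrees with this $\Sigma_n$-action by construction (via the isomorphisms in the second bullet of \cref{sigma-delta-diagram}). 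Naturality in $X$ is automatic, since the structure maps are built only from the functor $X$ and the above formal identifications.

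The main obstacle is the identification of the realization of the restricted object $Y$ with $\Sigma^{\underline n - f(\underline m)}|X_{(m;\bullet,\dots,\bullet)}|$: one must keep straight which simplicial directions are being collapsed to $S^1$ versus which are carrying genuine realization, and verify that the resulting isomorphism is natural both in the diagram $X$ and compatible with the composition of injections $f$. Once this identification is pinned down, the remaining checks are bookkeeping with the Grothendieck construction $\mc I \int \mathbf\Delta^{\op \times -}$, matching the identity-on-objects part of $\Sigma_S$ with the subcategory of $\mc I$ spanned by bijections and the structure maps with the remaining morphisms. The proof is therefore, as the statement asserts, ``pure bookkeeping,'' with the only substantive content being the suspension identification above.
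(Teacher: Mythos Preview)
Your proposal is correct and follows exactly the approach the paper sketches before the lemma statement: identify the restricted sub-multisimplicial object with a smash of $S^1$ factors in the collapsed directions (using the two bullets of \cref{sigma-delta-diagram}) to produce the structure maps, then verify the $\Sigma_S$ relations. The paper itself offers no argument beyond declaring the proof to be ``pure bookkeeping,'' and your write-up is precisely an expansion of that bookkeeping.
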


Next we prove the properness theorem. We say that a morphism of $\Sigma_{\mathbf\Delta}$-diagrams is a {\bf level-wise equivalence} if it is a weak equivalence in $\cM$ on each term $X_{(n;k_1,\dotsc,k_n)}$.

\begin{thm}\label{properness-thm}
	The functor $|-|$ is left-deformable, meaning that there is a class of $\Sigma_{\mathbf\Delta}$-diagrams on which it preserves level-wise equivalences and that there is a cofibrant replacement functor with image in this class.
\end{thm}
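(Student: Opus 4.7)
The plan is to take the good class to be the \emph{proper} $\Sigma_{\mathbf\Delta}$-diagrams: those $X$ for which each multisimplicial object $X_{(n;\bullet,\dotsc,\bullet)}$ is Reedy cofibrant in $\mc M^{\mathbf\Delta^{\op\times n}}$. The classical fact that realization of a Reedy cofibrant simplicial object in a simplicially tensored model category preserves weak equivalences iterates to show that realization of a Reedy cofibrant $n$-fold multisimplicial object preserves levelwise weak equivalences. Since the functor $|-|$ of \cref{realization_is_symmetric_spectrum} is assembled level by level from multisimplicial realizations, it follows immediately that $|-|$ preserves levelwise equivalences between proper $\Sigma_{\mathbf\Delta}$-diagrams.

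For the cofibrant replacement functor $Q$, I would proceed by induction on $n$. Set $QX_{(0;)} := X_{(0;)} = *$, and assume that $QX_{(m;\bullet,\dotsc,\bullet)}\to X_{(m;\bullet,\dotsc,\bullet)}$ has been built for every $m<n$ as a $\Sigma_m$-equivariant Reedy cofibrant replacement compatible with the $\mc I$-morphisms. At any multi-index $(k_1,\dotsc,k_n)$ with some $k_i = 0$, set $QX_{(n;k_1,\dotsc,k_n)} := *$ by the pointedness axiom; at a multi-index with some $k_i = 1$ but no $k_j = 0$, use the isomorphism axiom of \cref{sigma-delta-diagram} to identify the required value with an already-constructed $QX_{(n-1;\dotsc)}$. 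At the remaining multi-indices, those with all $k_i \geq 2$, extend to a $\Sigma_n$-equivariant Reedy cofibrant replacement of $X_{(n;\bullet,\dotsc,\bullet)}$ by running the $\Sigma_n$-equivariant Reedy small-object argument one multi-index at a time, using that the latching object at any such multi-index only involves strictly smaller multi-indices, some of which lie in the already-prescribed boundary.

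The main obstacle will be making this inductive construction simultaneously functorial in $X$, equivariant for each $\Sigma_n$, and compatible with every morphism of $\mc I \int \mathbf\Delta^{\op\times -}$. To package this cleanly I would view the whole indexing category as a generalized Reedy category stratified first by $n$ and then by multisimplicial degree $k_1 + \dotsb + k_n$, and identify $\Sigma_{\mathbf\Delta}$-diagrams with those functors in $\mc M^{\mc I \int \mathbf\Delta^{\op\times -}}$ satisfying the pointedness and isomorphism axioms of \cref{sigma-delta-diagram}. Since these axioms are preserved under the latching-extension step at each Reedy degree, a standard functorial Reedy cofibrant replacement in the ambient functor category restricts to a functorial Reedy cofibrant replacement on $\Sigma_{\mathbf\Delta}$-diagrams. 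The resulting $QX \to X$ is proper by construction and a levelwise weak equivalence, so by the first paragraph $|QX| \to |X|$ is a levelwise equivalence of symmetric spectrum objects, completing the verification of left-deformability.
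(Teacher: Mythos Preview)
Your first two paragraphs are essentially the paper's approach: the good class is the Reedy cofibrant diagrams at each $n$, and the replacement is built by induction on $n$, using the pointedness and isomorphism axioms to prescribe values at multi-indices with some $k_i\leq 1$ from lower stages, then extending to the region $k_i\geq 2$ by an equivariant Reedy factorization. The paper is a bit more explicit about how ``$\Sigma_n$-equivariant'' is achieved: one factors at a single representative of each $\Sigma_n$-orbit of multi-indices, equivariantly for the stabilizer of that multi-index, and then transports to the rest of the orbit by the group action. Your ``$\Sigma_n$-equivariant Reedy small-object argument'' is gesturing at this, but the stabilizer detail is what makes it work.

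Your third paragraph, however, has a genuine gap. You propose to repackage everything as a generalized Reedy structure on $\mc I\int\mathbf\Delta^{\op\times -}$ and claim that a \emph{standard} functorial Reedy cofibrant replacement in the ambient functor category will automatically preserve the isomorphism axiom of \cref{sigma-delta-diagram}. This does not follow. In a generic Reedy replacement, $QX_{(n;f_*(k_1,\dotsc,k_m))}$ is produced by factoring a latching-to-matching map at that object; there is no mechanism forcing the structure map from $QX_{(m;k_1,\dotsc,k_m)}$ to land as an isomorphism, even though it was one for $X$. The isomorphism axiom is a condition on the diagram, not a feature of any Reedy structure on the indexing category, and it is exactly what your paragraph~2 (and the paper) enforces by hand, by \emph{declaring} that the $\mc I$-structure maps with $\phi_i=\id$ go to identities in $QX$ and then checking consistency. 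So the ``clean packaging'' you propose would not, without substantial further argument, land back in $\Sigma_{\mathbf\Delta}$-diagrams; stick with the explicit inductive construction.
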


\begin{proof}
	The category of simplicial objects in $\mc M$ admits a Reedy model structure in which
	\begin{enumerate}
		\item An object $X_{\bullet}$ is cofibrant when the latching maps
    \[
    L_n X \coloneqq \bigcup_{\substack{\text{degeneracies} \\ s \colon X_{m} \to X_{n}}} X_{m} \arr X_n
		\]
    are all cofibrations,
    \item\label{item:induct_cofib} cofibrant replacement can be performed inductively over the simplicial levels, and
		\item geometric realization preserves weak equivalences between cofibrant objects.
	\end{enumerate}
	The upshot of \cref{item:induct_cofib} is that on the subcategory of simplicial objects where the first two latching maps $* \to X_0$ and $L_1 X = X_0 \to X_1$ are already cofibrations, there is a cofibrant replacement functor that does not modify the first two levels.

	This generalizes to multisimplicial objects. There is a Reedy model structure on $n$-fold multisimplicial objects, obtained inductively as the Reedy model structure on simplicial $(n-1)$-fold multisimplicial objects, where the latching object at multilevel $(k_1,\dotsc,k_n)$ is a colimit over the multilevels $(l_1,\dotsc,l_n)$ with all $l_i \leq k_i$ and some $l_i \neq k_i$. In particular, if the latching maps are already cofibrations for all multilevels with at least one $k_i \leq 1$, then there is a cofibrant replacement that does not change those multilevels, only those with all $k_i \geq 2$.

	We extend this to a cofibrant replacement on $\Sigma_n \int \Delta^{\op\times n}$-diagrams in $\mc M$ by carrying out the factorization at one term $(k_1, \dotsc, k_n)$ in each $\Sigma_n$-orbit of the objects of $\Delta^{\op\times n}$, and extending to the remaining terms in the orbit by $\Sigma_n$-equivariance. To be precise, we pick a factorization
	\[ \xymatrix{
		L_{(k_1, \dotsc, k_n)} QX \ar@{>-->}[r] &
		QX_{(k_1, \dotsc, k_n)} \ar@{-->>}[r]^-\sim &
		X_{(k_1, \dotsc, k_n)} \times_{M_{(k_1, \dotsc, k_n)} X} M_{(k_1, \dotsc, k_n)} QX \ar@{->>}[r]^-\sim &
		X_{(k_1, \dotsc, k_n)}
	} \]
	in spaces that are equivariant with respect to the subgroup of $\Sigma_n$ that fixes $(k_1, \dotsc, k_n)$. Then for $(k'_1,\dotsc,k'_n) = \sigma(k_1, \dotsc, k_n)$ we define the cofibrant replacement to be $QX_{(k_1, \dotsc, k_n)}$ with structure maps
	\[ \xymatrix{
		L_{(k'_1,\dotsc,k'_n)} QX \ar[r]^-{\sigma^{-1}} & L_{(k_1, \dotsc, k_n)} QX \ar[r] & QX_{(k_1, \dotsc, k_n)} \ar[r] & X_{(k_1, \dotsc, k_n)} \ar[r]^-\sigma & X_{(k'_1,\dotsc,k'_n)}.
	} \]
	Note that the $\Sigma_n$-equivariance of the diagrams on the left and right make the composite into a factorization of the given map $L_{(k'_1,\dotsc,k'_n)} QX \to X_{(k'_1,\dotsc,k'_n)}$. It is then straightforward to define the action of the morphisms of $\Sigma_n \int \mathbf\Delta^{\op\times n}$ on these replacements so that they form a cofibrant diagram level-wise equivalent to $X$.

	We extend this further to a cofibrant replacement on $\mc I_{\leq n} \int \Delta^{\op\times -}$ by induction on $n$. At each stage we extend the replacement from $\mc I_{\leq (n-1)} \int \Delta^{\op\times -}$ to those objects of $\Delta^{\op\times n}$ where at least one index is $\leq 1$, by declaring that each order-preserving map $\alpha\colon \underline m \to \underline n$ will go to an identity morphism in the cofibrant replacement, and then carefully checking that there is a unique and well-defined way to extend this definition to the remaining morphisms. We then extend to the rest of $\mc I_{\leq n} \int \Delta^{\op\times -}$ using the inductive cofibrant replacement from the previous paragraph. Using the structure of the Grothendieck construction we check that everything is well-defined and commutes with the map back to $X$. By construction, on each of the fiber categories $\mathbf\Delta^{\op\times n}$ the replacements are Reedy cofibrant, so the realization will preserve equivalences. Finally, we check the construction is natural in maps $X \to Y$, so that the cofibrant replacement is a functor mapping naturally back to the identity functor. This finishes the proof.
\end{proof}

\section{The Dennis trace}\label{sec:dennis_trace}

We conclude by constructing the Dennis trace map
$K(\End(\uncat\cC)) \to \THH(\spcat\cC)$ for a spectrally enriched Waldhausen category $\spcat\cC$. This is generalized to handle coefficients and lifted to $\TR$ in \cite{clmpz-dt}, but we give a short presentation here that highlights the role of the technical material developed in this paper.

\begin{defn}\label{def:category_of_endomorphisms}
	Given a Waldhausen category $\uncat{\cC}$, let $\End(\uncat\cC)$ be the
	Waldhausen category of functors $\Fun(\bbN,\uncat\cC)$, where $\bbN$ is
	considered as a category with one object and morphism set $\bbN$.  More
	concretely, the objects of $\End(\uncat{\cC})$ are endomorphisms
	$f\colon a \arr a$ in $\uncat{\cC}$, and the morphisms are commuting squares of the
	form
	\[ \xymatrix @R=1.5em{
		a \ar[r]^-f \ar[d]_-i & a \ar[d]^-{i} \\
		b \ar[r]_-g & b. } \]
	We define such a morphism to be a cofibration or
	weak equivalence if $i$ is a cofibration or weak equivalence, respectively.
	We also define exact functors
	\[
	\begin{tikzcd}
	\uncat{\cC} \arrow[r, bend left=50, "\iota_0"{below}]
	\arrow[r, bend right=50, "\iota_1"]
	&\End(\uncat{\cC}) \arrow[l, ""]
	\end{tikzcd}
	\]
	where $\End(\uncat{\cC}) \arr \uncat{\cC}$  forgets the endomorphism $f$.  The inclusions $\iota_0,\iota_1\colon \uncat{\cC} \arr \End(\uncat{\cC})$ equip each object $a$ with either the zero endomorphism or the identity endomorphism.
\end{defn}

\begin{example}
	If $A$ is a ring spectrum and $\spcat\cC = \tensor[^A]{\Perf}{}$ is the
	spectral Waldhausen category of perfect $A$-modules from \cref{ex:Perf_as_spectral_wald_cat}, then $K(\uncat\cC)$ is the usual definition of algebraic $K$-theory $K(A)$, and the $K$-theory of $\End(\uncat{\cC})$ is the $K$-theory of endomorphisms $K(\End(A))$.
\end{example}

Let $\spcat\cC$ be a spectral Waldhausen category.
The key observation underlying the construction of the Dennis trace is that the inclusion of the $0$-simplices in the cyclic bar construction (see \cref{def:THH}) defines a canonical map
\begin{equation} \label{eq:k_end_trace_0}
\bigvee_{c_0\in \ob \uncat{\cC}} \cC(c_0,c_0) \to \THH(\cC).
\end{equation}
Each object $f \colon c_0 \to c_0$ of $\End(\uncat{\cC})$ defines a map of spectra $\bbS \arr \spcat{\cC}(c_0,c_0)$, and so composing with \eqref{eq:k_end_trace_0} gives a map
\begin{equation}\label{eq:k_end_trace_1}
\Sigma^{\infty} \ob \End(\uncat{\cC}) = \bigvee_{\substack{f\colon c_0 \to c_0,\\ c_0 \neq *}} \bbS \arr \bigvee_{c_0 \in \ob \uncat{\cC}} \spcat{\cC}(c_0,c_0) \arr \THH(\spcat{\cC})
\end{equation}
where $f$ runs over the objects of $\End(\uncat{\cC})$.
Applying \eqref{eq:k_end_trace_1} to the spectral Waldhausen category $w_{k_0}S^{(n)}_{k_1,\dotsc,k_n} \spcat{\cC}$
for each value of $n$ and $k_0, \dotsc, k_n$ defines a map of orthogonal spectra
\begin{equation}\label{eq:k_end_trace_2}
\Sigma^{\infty} \ob \End(w_{k_0}S^{(n)}_{k_1,\dotsc,k_n} \uncat{\cC}) \arr \THH(w_{k_0}S^{(n)}_{k_1,\dotsc,k_n} \spcat{\cC}).
\end{equation}

The target of the map in \eqref{eq:k_end_trace_2} is known to split into a wedge of copies of $\THH(\cC)$, by the additivity theorem for $\THH$ \cite{dundas_mccarthy,dundas_goodwillie_mccarthy,blumberg_mandell_published,blumberg_mandell_unpublished}. A streamlined proof of additivity using shadows in bicategories can be found in \cite[\dtref{Theorem 5.9}{\cref{dt-thm:THH_additivity}}]{clmpz-dt}.

\begin{thm}[The additivity theorem for $\THH$]\label{thm:THH_additivity}
  Given a spectral Waldhausen category  $\cC$,
  there is an equivalence of spectra
	\[
	\bigvee_{\substack{1 \leq i_j \leq k_j \\ 1 \leq j \leq n}} \THH(\spcat{\cC})
	\stackrel\sim\arr \THH(w_{k_0}S^{(n)}_{k_1, \dotsc, k_n} \spcat{\cC})
	\]
	induced by the inclusions of spectral Waldhausen categories
	\[ \iota_{i_{j}} \colon \spcat{\cC} \arr w_{k_0}S^{(n)}_{k_1, \dotsc, k_n} \spcat{\cC} \]
	that are constant in the $w_{k_0}$ direction and embed a copy of $\cC$ in the $i_j$-th column of the $j$-th $S_{\bullet}$ construction.
\end{thm}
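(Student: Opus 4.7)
The plan is to prove the theorem in three stages: first collapse the $w_{k_0}$-direction using the Dwyer--Kan invariance of $\THH$; second, induct on $n$ to reduce to the case of a single application of $S_\bullet$; and third, tackle the base case that $\THH(S_k \cD) \simeq \bigvee_{i=1}^k \THH(\cD)$ for an arbitrary spectral Waldhausen category $\cD$.

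For the first reduction, \cref{w_bullet_invariance} tells us that the iterated degeneracy $\cD \to w_{k_0} \cD$ is a Dwyer--Kan equivalence for every $\cD$. After pointwise cofibrant replacement via \cref{thm:cofib_repl_spcat}, $\THH$ sends Dwyer--Kan equivalences to stable equivalences, since the cyclic bar construction of \cref{def:THH} is homotopy invariant on pointwise cofibrant inputs. Applying this with $\cD = S^{(n)}_{k_1, \ldots, k_n} \cC$, it suffices to prove the $k_0 = 0$ version of the statement. Then, inducting on $n$, a single application of $\THH(S_k \cE) \simeq \bigvee_{i=1}^k \THH(\cE)$ with $\cE = S^{(n-1)}_{k_2, \ldots, k_n} \cC$ strips one $S_\bullet$ from the tower, and iterating $n$ times gives the full wedge decomposition.

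For the base case, I would first treat $k = 2$, since the general case then follows from the simplicial identities decomposing $S_k$ into pairwise pushouts of copies of $S_2$ over $S_1 \cong \cC$. An object of $S_2 \cD$ is essentially a cofibration $a_1 \hookrightarrow a_2$ together with its canonical quotient, and the inclusions $\iota_1, \iota_2 \colon \cD \to S_2 \cD$ place an object of $\cD$ in the first or second column respectively. The cleanest approach, worked out in \cite[Theorem 5.9]{clmpz-dt}, uses the bicategorical formalism of shadows: the cofiber sequence of endofunctors $\iota_1 \pi_1 \to \id_{S_2 \cD} \to \iota_2 \pi_2$ of $S_2 \cD$, where $\pi_i$ are the projections to $\cD$, induces a corresponding splitting on shadows, and $\THH$ is itself a shadow in the appropriate bicategory. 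Alternative approaches include the direct splitting of the cyclic bar construction of the spectral category of pairs \cite{dundas_mccarthy, dundas_goodwillie_mccarthy} and the arguments via the ``additive theorem for Waldhausen categories enriched over spectra'' in \cite{blumberg_mandell_published, blumberg_mandell_unpublished}.

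The main obstacle is precisely this base case. None of the reductions above are hard, but the essential content is the non-trivial splitting at the level of cyclic bar constructions, which requires substantial work at the point-set level regardless of which approach one adopts. Since the authors explicitly treat additivity as a black box and refer to the companion paper, the proof proposal ultimately reduces to invoking \cite[Theorem 5.9]{clmpz-dt} for the $k = 2$ statement and then carrying out the purely formal reductions above.
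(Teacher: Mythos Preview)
The paper does not prove this theorem; it is explicitly singled out in the introduction as ``the only aspect of the construction that we do not develop from scratch,'' with references to \cite{dundas_mccarthy,dundas_goodwillie_mccarthy,blumberg_mandell_published,blumberg_mandell_unpublished} and to the companion paper \cite[Theorem~5.9]{clmpz-dt} for a proof via shadows. Your proposal therefore does strictly more than the paper itself: the reduction along the $w_{k_0}$-direction via \cref{w_bullet_invariance} and the induction on $n$ are correct and standard, and you correctly identify that the substantive content lives in the $S_2$ case, citing the same sources the paper does.

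One imprecision worth flagging: the passage from general $k$ to $k=2$ is not literally via ``simplicial identities decomposing $S_k$ into pairwise pushouts of copies of $S_2$ over $S_1$.'' The standard argument is induction on $k$: the functors $S_k\cD \to \cD$ (take $a_1$) and $S_k\cD \to S_{k-1}\cD$ (quotient by $a_1$) sit in a cofiber sequence of endofunctors of $S_k\cD$ of exactly the same shape as your $\iota_1\pi_1 \to \id \to \iota_2\pi_2$, and the $S_2$ additivity applied to that sequence splits $\THH(S_k\cD)$ as $\THH(\cD) \vee \THH(S_{k-1}\cD)$. This is a matter of phrasing rather than a genuine gap.
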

Appending the splitting from \cref{thm:THH_additivity} to the map \eqref{eq:k_end_trace_2} gives a zig-zag of orthogonal spectra
\begin{equation}\label{eq:levelwise_trace_zigzag}
\Sigma^{\infty} \ob \End(w_{k_0}S^{(n)}_{k_1,\dotsc,k_n} \uncat{\cC}) \arr \THH(w_{k_0}S^{(n)}_{k_1, \dotsc, k_n} \spcat{\cC}) \overset{\simeq}{\longleftarrow} \bigvee_{\substack{1 \leq i_j \leq k_j \\ 1 \leq j \leq n}} \THH(\spcat{\cC}).
\end{equation}
The number of summands on the right is the same as the number of nonzero points
in the set $S^1_{k_1} \sma \dotsm \sma S^1_{k_n}$, where $S^1_\bullet$ is the
simplicial circle $\Delta[1]/\partial \Delta[1]$. Therefore these wedge sums
form an $(n + 1)$-fold multisimplicial spectrum $(S^1_\bullet)^{\sma n} \sma \THH(\spcat{\cC})$ that is constant in the $k_0$
direction.

The following lemma follows directly from the definitions and \cref{thm:THH_additivity}:

\begin{lem}\label{zigzag_respects_simplicial_str}
	The maps in the zig-zag \eqref{eq:levelwise_trace_zigzag} of multisimplicial orthogonal spectra commute with the $\Sigma_n$-actions and
	the identifications that remove a simplicial direction when its index is equal
	to 1.  In other words the given maps form a zig-zag of $\Sigma_\Delta$-diagrams of
	simplicial orthogonal spectra.
\end{lem}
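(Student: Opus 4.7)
The plan is to check the two compatibilities separately, treating the three terms in the zig-zag as functors of multisimplicial data and showing each map is natural under the relevant operations.

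First I would treat the $\Sigma_n$-equivariance. The assignment $\spcat{\cC} \mapsto w_{k_0} S^{(n)}_{k_1,\dotsc,k_n} \spcat{\cC}$ is built from the $\Fun(-,-)$ construction of \cref{thm:moore_end}, which is functorial in the indexing category, so permuting the $n$ factors $[k_j] \times [k_j]$ produces an isomorphism of spectral Waldhausen categories. Both $\End(-)$, applied to the underlying Waldhausen category, and $\THH(-)$, applied to the spectral category, are functors, so the map \eqref{eq:k_end_trace_2} is $\Sigma_n$-equivariant by naturality. On the right-hand term, the wedge is indexed by tuples $(i_1,\dotsc,i_n)$ with $1 \le i_j \le k_j$, and the symmetric group acts by simultaneously permuting the $j$-indices on the indexing set and on the $S_\bullet$ factors; the additivity equivalence of \cref{thm:THH_additivity} is defined by the inclusions $\iota_{i_j}$ of the $i_j$-th column of the $j$-th $S_\bullet$ construction, which are manifestly compatible with this permutation. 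Thus both maps of \eqref{eq:levelwise_trace_zigzag} are $\Sigma_n$-equivariant.

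Next I would verify the padding identifications. Suppose $k_j = 1$ for some $j$. On the one hand, $S_1 \spcat{\cC} \cong \spcat{\Fun(*,\cC)}$ by the remark following \cref{ex:spec_s_dot}, hence is canonically pointwise equivalent to $\spcat{\cC}$ via \cref{thm:moore_end}\eqref{thm:moore_end_we}; under iteration, this identifies $w_{k_0} S^{(n)}_{k_1,\dotsc,1,\dotsc,k_n} \spcat{\cC}$ with $w_{k_0} S^{(n-1)}_{k_1,\dotsc,\widehat{k_j},\dotsc,k_n} \spcat{\cC}$. The functors $\End$ and $\THH$ respect these identifications, so the left-hand map in \eqref{eq:levelwise_trace_zigzag} matches the corresponding map for the $(n-1)$-fold iterate. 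On the right-hand side, when $k_j = 1$ the wedge has a single summand in the $j$-coordinate (forcing $i_j = 1$), so the indexing set reduces to tuples over the remaining coordinates; the inclusion $\iota_1$ in the $j$-th direction is precisely the constant-in-$S_1$ inclusion, which under the identification $S_1 \spcat{\cC} \cong \spcat{\Fun(*,\cC)}$ becomes the pointwise equivalence $\spcat{\cC} \simeq \spcat{\Fun(*,\cC)}$. Therefore the additivity equivalence is compatible with the reduction, which is what the $\Sigma_{\mathbf{\Delta}}$-axiom demands.

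Finally I would note that the more general structure maps coming from the Grothendieck construction $\mc I \int \mathbf{\Delta}^{\op \times -}$ factor through the $\Sigma_n$-action together with degenerate-padding by $[1]$, so naturality under these two generating families of morphisms implies naturality under all morphisms in the indexing category. The combinatorial bookkeeping for the indexing of the wedge summands is the step that requires the most care, but given the explicit description of the additivity equivalence via the column-inclusions $\iota_{i_j}$ it amounts to matching two identical indexing conventions rather than proving a substantive homotopical statement.
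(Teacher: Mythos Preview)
The paper gives no proof of this lemma beyond the sentence ``follows directly from the definitions and \cref{thm:THH_additivity},'' so your expansion is exactly the kind of bookkeeping the authors have in mind, and your treatment of the $\Sigma_n$-equivariance and of the additivity map via the column inclusions $\iota_{i_j}$ is correct.

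There is one imprecision in your padding argument. You combine the isomorphism $S_1\spcat{\cC}\cong\spcat{\Fun(*,\cC)}$ with the pointwise \emph{equivalence} $\spcat{\cC}\simeq\spcat{\Fun(*,\cC)}$ from \cref{thm:moore_end}\eqref{thm:moore_end_we}, and then say ``under iteration'' this identifies the two levels. But the $\Sigma_{\mathbf\Delta}$-diagram axiom in \cref{sigma-delta-diagram} demands that the padding maps be \emph{isomorphisms}, not merely equivalences, and moreover the iterated $S_\bullet$ in \cref{ex:spec_iterated_s_dot} is defined as a single $\Fun$ over the product category $[k_0]\times\prod_j[k_j]^2$, not as a literal iterate, so one cannot simply peel off an $S_1$ factor. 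The correct argument bypasses \eqref{thm:moore_end_we} entirely: apply \cref{thm:moore_end}\eqref{thm:moore_end_iso} with $I=[k_0]\times\prod_{j\neq j_0}[k_j]^2\times\{(0,1)\}$ and $J=[k_0]\times\prod_j[k_j]^2$ (the hypothesis on tuples through $J\setminus I$ is easily checked since $(0,0)$, $(1,0)$, $(1,1)$ are respectively initial, comparable only to points of $J\setminus I$, and terminal in the $[1]^2$ factor). This gives an isomorphism on mapping spectra, and since the object sets are already in bijection (the $S_\bullet$ conditions force the extra $[1]^2$ coordinates to the zero object), the two spectral categories are isomorphic, hence so are their $\THH$. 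This is the same mechanism as the remark after \cref{ex:spec_s_dot}, just applied to the product indexing category.
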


Another way of saying this is that
$(S^1_\bullet)^{\sma n} \sma \THH(\spcat{\cC})$ is the free
$\Sigma_\Delta$-diagram on the spectrum $\THH(\spcat\cC)$ at level $(0;)$, and
the map inducing the splitting in the additivity theorem agrees with the map that arises from the
free-forgetful adjunction.

The geometric realization of each of these objects is a symmetric spectrum object in orthogonal spectra. We call such an object a {\bf bispectrum}. Bispectra are equivalent to orthogonal spectra, along a prolongation functor that turns every bispectrum into an orthogonal spectrum; see \cref{sec:model_structures} for details.

At level $n$ in the symmetric spectrum
direction, the resulting zig-zag of orthogonal spectra is
\begin{equation}\label{eq:levelwise_trace_after_realization}
\begin{aligned}
\abs{\Sigma^{\infty}
	\ob \End(w_{\bullet}S^{(n)}_{\bullet}
		\uncat{\cC})} &\arr
\abs{\THH(w_{\bullet}S^{(n)}_{\bullet} \spcat{\cC})} \overset{\simeq}{\longleftarrow} \Sigma^n \THH(\spcat{\cC}).
\end{aligned}
\end{equation}
There is a canonical identification of sets
\[
\ob \End(w_{\bullet}S^{(n)}_{\bullet}
	\uncat{\cC}) = \ob w_{\bullet}S^{(n)}_{\bullet}\End(\uncat{\cC})
\]
which identifies the bispectrum
on the left of \eqref{eq:levelwise_trace_after_realization} with the orthogonal
suspension spectrum of the symmetric spectrum $K(\End(\uncat{\cC}))$.  On the
other hand, the spectrum on the right of
\eqref{eq:levelwise_trace_after_realization} is the symmetric suspension
spectrum of the orthogonal spectrum $\THH(\spcat{\cC})$.

Applying the (left-derived) prolongation functor from \cref{quillen-adjoints}
to these bispectra, we get a zig-zag of orthogonal spectra
\begin{equation}\label{eq:trc_1_step_back}
\bbP K(\End(\uncat{\cC})) \arr \bbP\abs{\THH(w_{\bullet}S^{*}_{\bullet}
	\spcat{\cC})} \overset{\simeq}{\longleftarrow} \THH(\cC).
\end{equation}
The first term in \eqref{eq:trc_1_step_back} is the prolongation of $K(\End(\uncat{\cC}))$ from symmetric to orthogonal spectra.

\begin{defn} \label{twisted_dennis_trace} The \textbf{Dennis trace map}
	associated to a spectral Waldhausen category $\cC$ is obtained by choosing an
	inverse to the wrong-way map in \eqref{eq:trc_1_step_back}, defining a map
	\begin{equation}\label{eq:trc_ors_k_thh}
	\trc \colon \bbP K(\End(\uncat{\cC})) \arr \THH(\spcat{\cC})
	\end{equation}
	in the homotopy category of orthogonal spectra,
	or, equivalently, in the homotopy category of symmetric spectra
	\begin{equation}\label{eq:trc_ss_k_thh}
	\trc \colon K(\End(\uncat{\cC})) \arr \bbU\THH(\spcat{\cC})
	\end{equation}
	to the underlying symmetric spectrum of $\THH$.
\end{defn}

\begin{rmk}\label{rmk:properness_for_zigzag}
	In order to justify that the backwards map of the zig-zag is an equivalence after realization in \eqref{eq:levelwise_trace_after_realization}, we need to know that all three multisimplicial orthogonal spectra in the zig-zag are Reedy cofibrant, as discussed in \cref{sec:properness}.
	For the two outside terms this is straightforward to check. For the middle term it may not be true; however by \cref{properness-thm}
	we can always fatten the zig-zag to an equivalent one for which this holds. As a result of the properness theorem, the construction of the Dennis trace is  insensitive to the choice of model for $\THH$.
\end{rmk}

\begin{example}
	Taking $\spcat\cC = \tensor[^A]{\Perf}{}$ for a ring spectrum $A$, we get maps
	\[ \xymatrix@R=1em{
		K(A) \ar@{=}[d] & \widetilde K\End(A) \ar@{=}[d] & \ar[d]^-\sim \THH(A) \\
		K(\tensor[^A]{\Perf}{}) \ar[r]^-{\iota_1} & \widetilde K\End(\tensor[^A]{\Perf}{}) \ar[r]^-{\trc} & \THH(\tensor[^A]{\Perf}{})
	} \]
	whose composite agrees with the Dennis trace map $K(A) \to \THH(A)$ studied previously \cite{dundas_mccarthy,dundas_goodwillie_mccarthy,madsen_survey}.  The right vertical equivalence is an instance of the Morita invariance of $\THH$, as in \cref{thm:THH_morita_invariant}.
\end{example}

\appendix
\section{Model categories of bispectra}\label{sec:model_structures}
As we saw in \S\ref{sec:dennis_trace}, two spectral directions arise naturally in the construction of the Dennis trace map for a spectral Waldhausen category $\cC$: the orthogonal spectrum structure of $\THH(\cC)$ coming from the enrichment of $\cC$ and the symmetric spectrum structure coming from the iterated $S_{\bullet}$-construction. The Dennis trace is therefore naturally a map of bispectra.

This appendix collects several key results about bispectra and diagram spectra that are used in \S\ref{sec:dennis_trace} and \cite{clmpz-dt}, most importantly \cref{quillen-adjoints}, which details the equivalences between bispectra and orthogonal spectra. We give these results for $G$-equivariant bispectra that are naive in one direction and genuine in the other, since this structure is needed in the companion paper \cite{clmpz-dt}. Other sources that discuss bispectra include \cite{mandell_may_shipley_schwede,hovey:spec_sym_spec,dmpsw}.  A reader who has spent time with \cite{mandell_may_shipley_schwede,mandell_may} will probably find this section familiar.

We begin by recalling the indexing categories for diagram spectra from \cite{mandell_may_shipley_schwede,mandell_may}.
Let $\Sigma_S$ be the topological category with objects the natural numbers, regarded as the finite sets $\underline{n} = \{1,\dotsc,n\}$, and morphism spaces
\[ \Sigma_S(\underline{m},\underline{n}) = \bigvee_{i\colon \underline{m} \hookrightarrow \underline{n}} S^{\underline{n} - i(\underline{m})}. \]
Topological diagrams on $\Sigma_S$ encode the same data as symmetric spectra.

Let $\mathscr J$ be the topological category with objects the natural numbers, regarded as the Euclidean spaces $\bbR^n$, and morphism spaces
\[ \mathscr J(\bbR^m,\bbR^n) = O(\bbR^m,\bbR^n)_+ \sma_{O(n-m)} S^{n-m}. \]
Here $O(\bbR^m,\bbR^n)$ is the space of linear isometric embeddings $\bbR^m \rightarrow \bbR^n$, and the above space is the Thom space of the bundle that assigns an embedding to its orthogonal complement.
Topological diagrams on $\mathscr J$ encode the same data as orthogonal spectra.

Fix a finite group $G$, and let $\mathscr J_G$ be the category enriched in $G$-spaces whose objects are finite-dimensional $G$-representations $V$ in some fixed universe $\mathcal U$, morphisms as above for $\mathscr J$ with $G$ acting by conjugating the map $V \rightarrow W$ and acting on the vector in the complement. $G$-enriched diagrams on $\mathscr J_G$ encode the same data as orthogonal $G$-spectra.

Continuing to fix a finite group $G$, let $\Sigma_S \sma \mathscr J_G$ be the smash product category, whose objects are pairs $(\underline{n},V)$ and whose morphism spaces are the smash product
\[ \left( \bigvee_{i\colon \underline{m} \hookrightarrow \underline{n}} S^{\underline{n} - i(\underline{m})} \right) \sma \left( O(V,W)_+ \sma_{O(W-V)} S^{W-V} \right). \]
Topological diagrams on this category are called (symmetric-orthogonal) {\bf equivariant bispectra}. They are symmetric spectrum objects in orthogonal $G$-spectra. Let $G\Bi$ be the category of such bispectra. When $G$ is the trivial group, we simply call these objects bispectra.

It is a standard fact that orthogonal $G$-spectra are nothing more than $G$-objects in the category of orthogonal spectra. The following is the analog for bispectra.
\begin{lem}
	$G$-enriched diagrams on $\Sigma_S \sma \mathscr J_G$ are equivalent to $G$-objects in diagrams on $\Sigma_S \sma \mathscr J$. More informally, an equivariant bispectrum is the same thing as a bispectrum with a $G$-action.
\end{lem}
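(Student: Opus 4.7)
The plan is to adapt the standard Mandell--May equivalence between orthogonal $G$-spectra and $G$-objects in orthogonal spectra (see \cite[Ch.~V]{mandell_may}) to the bispectrum setting. The key observation is that $\Sigma_S$ carries only the trivial $G$-action in the smash product $\Sigma_S \sma \mathscr{J}_G$, so the entire $G$-equivariant structure lives in the $\mathscr{J}_G$-factor and the orthogonal argument applies levelwise in the $\Sigma_S$-direction.

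Concretely, I would exhibit a pair of mutually inverse functors. In the forward direction, given a $G$-enriched diagram $X$ on $\Sigma_S \sma \mathscr{J}_G$, restrict to the (non-$G$-enriched) subcategory whose $\mathscr{J}_G$-objects are trivial $G$-representations $\bbR^n$; the $G$-action on each space $X(\underline{n}, \bbR^k)$ coming from the enrichment commutes with all structure maps (which are indexed by $G$-fixed morphisms in $\mathscr{J}_G$), so the outcome is a bispectrum object in $G$-spaces, i.e., a bispectrum with a $G$-action. Conversely, given a bispectrum $Y$ equipped with a $G$-action, define a $G$-enriched diagram by
\[ (\underline{n}, V) \longmapsto Y(\underline{n}, \bbR^{\dim V}), \]
equipped with the $G$-action that twists the original $G$-action on $Y$ by the representation $V$; the structure maps along $\mathscr{J}_G(V,W)$ are defined so that the conjugation action of $G$ on linear isometries and Thom complements is correctly absorbed into the twisted action.

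The main verification is that these two constructions are mutually inverse and that the inverse functor indeed lands in $G$-enriched diagrams. This amounts to the observation that the $G$-twisting by $V$ precisely compensates for the conjugation $G$-action on $\mathscr{J}_G(V, W)$, which is exactly the calculation from \cite[V.1]{mandell_may} for orthogonal $G$-spectra. Since smashing with $\Sigma_S$ only introduces additional non-equivariant indexing that commutes with all of the $G$-structure, applying that calculation separately in each $(\underline{m}, \underline{n})$-slot of $\Sigma_S$ completes the argument. I do not anticipate any serious obstacle beyond organizing the notation carefully to keep track of the two commuting enrichments.
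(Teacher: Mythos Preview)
Your proposal is correct and follows essentially the same approach as the paper: both reduce to the Mandell--May argument \cite[V.1.5]{mandell_may} by observing that restriction from $\mathscr J_G$ to the trivial-representation subcategory $\mathscr J$ induces an equivalence of $G$-enriched diagram categories, and that the $\Sigma_S$-factor plays no equivariant role. The paper's proof is a one-line citation of this fact; your proposal spells out the inverse functor more explicitly, but the underlying idea is identical.
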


\begin{proof}
	Identical to the argument from  \cite[V.1.5]{mandell_may}. The forgetful functor that restricts from $\mathscr J_G$ to the subcategory $\mathscr J$ induces an equivalence on categories of $G$-enriched diagrams, essentially because when we forget the $G$-actions $\mathscr J \arr \mathscr J_G$ is an equivalence of topological categories.
\end{proof}

For every $G$-space $A$, integer $n \geq 0$ and finite-dimensional $G$-representation $V \subset \mathcal U$, we define the {\bf free bispectrum} $F_{(n,V)} A$ as the free diagram
\[ \Sigma_S(\underline{n},-) \sma \mathscr J_G(V,-) \sma A. \]
A map of equivariant bispectra $X \arr Y$ is a {\bf level-wise equivalence} or {\bf level-wise fibration} if the map $X(n,V)^H \arr Y(n,V)^H$ is an equivalence, respectively a Serre fibration, for all $n$ and $V$, and $H \leq G$.

\begin{prop}\label{level_model_structure}
	There is a cofibrantly generated level model structure on $G\Bi$ using the level-wise equivalences, the level-wise fibrations, and generating cofibrations and acyclic cofibrations
\begin{align*}
	I &= \left\{ \ F_{(n,V)}\left[ G/H \times S^{k-1} \rightarrow G/H \times D^k \right]_+\bigg| n,k \geq 0,  V \in \mathcal U,  H \leq G \right\} \\
	J &= \left\{ \ F_{(n,V)}\left[ G/H \times D^k \rightarrow G/H \times D^k \times I \right]_+ \bigg| n,k \geq 0,  V \in \mathcal U,  H \leq G \right\}.
\end{align*}
\end{prop}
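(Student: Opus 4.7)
The plan is to apply Kan's recognition theorem for cofibrantly generated model structures, transferring the equivariant Serre model structure on based $G$-spaces level-wise along the free-evaluation adjunction. The category $G\Bi$ is complete and cocomplete with both limits and colimits computed level-wise, so this is the appropriate framework. Recall the standard $G$-equivariant model structure on based compactly generated weak Hausdorff $G$-spaces, in which a map $f$ is a weak equivalence (resp. fibration) iff $f^H$ is a weak equivalence (resp. Serre fibration) of spaces for every subgroup $H \leq G$; this model structure is cofibrantly generated by the sets
\[
I_G = \{ (G/H \times S^{k-1})_+ \to (G/H \times D^k)_+ \}, \qquad J_G = \{ (G/H \times D^k)_+ \to (G/H \times D^k \times I)_+ \}.
\]

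Next I would observe that $I = \{ F_{(n,V)} i : i \in I_G, \ n \geq 0, \ V \in \mathcal U \}$ and similarly $J = \{ F_{(n,V)} j : j \in J_G \}$, where $F_{(n,V)}$ is left adjoint to evaluation $\mathrm{ev}_{(n,V)}\colon G\Bi \to G\mathbf{Top}_*$. Using the adjunction isomorphism
\[
G\Bi(F_{(n,V)} A, X) \cong G\mathbf{Top}_*(A, \mathrm{ev}_{(n,V)} X),
\]
a map $X \to Y$ in $G\Bi$ has the right lifting property against $I$ (resp. $J$) if and only if each $\mathrm{ev}_{(n,V)}(X \to Y)$ has the right lifting property against $I_G$ (resp. $J_G$), i.e.\ if and only if $X \to Y$ is a level-wise acyclic fibration (resp. level-wise fibration). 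This immediately gives the standard identification of $I$-injectives as the intersection of level-wise fibrations and level-wise equivalences, and $J$-injectives as the level-wise fibrations, exactly as needed for Kan's criterion.

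I would then verify the remaining hypotheses of Kan's recognition theorem. The domains of maps in $I$ and $J$ are of the form $F_{(n,V)} A$ for $A$ a finite $G$-CW complex, and since colimits are computed level-wise and $\mathrm{ev}_{(n',V')} F_{(n,V)} A = \Sigma_S(\underline{n},\underline{n'}) \sma \mathscr J_G(V,V') \sma A$ is compact when $A$ is, these domains are small relative to transfinite compositions of pushouts of maps in $I$ and $J$. To complete the argument, I need to show that relative $J$-cell complexes are level-wise equivalences; this reduces, by the adjunction and level-wise computation of colimits, to the corresponding well-known closure property for $J_G$-cell complexes in $G$-spaces (trivial $h$-cofibrations being closed under pushouts, transfinite composition, and retracts). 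Kan's theorem then produces the desired cofibrantly generated level model structure with weak equivalences and fibrations as claimed.

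The main technical issue to watch is ensuring that level-wise acyclic $h$-cofibrations of $G$-spaces are preserved under the formation of free bispectra and their pushouts, which requires that each space $\Sigma_S(\underline n, \underline{n'}) \sma \mathscr J_G(V,V')$ is suitably well-pointed so that smashing with it preserves weak equivalences and cofibrations. This is standard for these indexing categories because the morphism spaces are built from spheres and Thom spaces of vector bundles over Stiefel manifolds, all of which are $G$-CW complexes with nondegenerate basepoints. Once this is in hand the argument reduces entirely to level-wise checks, and the proposition follows.
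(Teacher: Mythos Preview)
Your proposal is correct and follows essentially the same route as the paper. The paper's proof invokes the same recognition criterion (citing Hovey rather than calling it Kan's theorem) and checks the same list of conditions: smallness of domains, $J$-cell complexes are level-wise equivalences and $I$-cofibrations, and $I$-injectives coincide with level-wise acyclic fibrations; your framing via the free/evaluation adjunction and your remark on well-pointedness of the morphism spaces merely make explicit what the paper leaves as ``straightforward to check.''
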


We call the cofibrations in this model structure \textbf{free cofibrations}.

\begin{proof}
	Following \cite{hovey}, an {\bf $I$-cell complex} is a map built as a sequential colimit of pushouts of coproducts of maps in $I$. An {\bf $I$-injective map} is a map with the right lifting property with respect to $I$, and an {\bf $I$-cofibration} is a map with the left lifting property with respect to $I$-injective maps. It is then straightforward to check the following list of sufficient conditions:
	\begin{enumerate}
		\item The weak equivalences are closed under 2-out-of-3 and retracts.
		\item A map from a domain in $I$ into an $I$-cell complex factors through some finite stage, and the same condition for $J$.
		\item A $J$-cell complex is both a level-wise equivalence and an $I$-cofibration.
		\item A map is $I$-injective if and only if  it is both a level-wise equivalence and $J$-injective.
	\end{enumerate}
\end{proof}

A {\bf $\pi_*$-isomorphism} of equivariant bispectra is a map $X \arr Y$ inducing isomorphisms on the equivariant stable homotopy groups
\[ \pi_k^H(X) = \underset{(n,V)}\colim\, \pi_0([\Omega^{n+k+V} X(n,V)]^H) \]
for all $k \in \bbZ$ and $H \leq G$. The colimit is taken along the filtered system with one morphism $(m,V) \arr (n,W)$ when $m \leq n$ and $V \subseteq W \subset \mathcal U$, and no morphisms otherwise. The maps all arise from the action of the category $\Sigma_S \sma \mathscr J_G$, but restricted to the standard inclusions $\underline{m} \hookrightarrow \underline{n}$ hitting only the first $m$ elements, and the inclusions $V \subseteq W$ as subspaces of $\mathcal U$. When $k < 0$ we further restrict to $n \geq |k|$ and interpret $\Omega^{n+k+V}$ as $\Omega^{n+k}\Omega^V$, or restrict to $V$ containing a standard copy of $\bbR^{|k|}$ and interpret $\Omega^{n+k+V}$ as $\Omega^n\Omega^{V-\bbR^{|k|}}$. These two interpretations give naturally isomorphic groups.

An equivariant bispectrum $X$ is an \textbf{$\Omega$-spectrum} if the canonical maps $$X(n,V) \arr \Omega^{m+W}X(m+n,V+W)$$ are all equivariant weak equivalences. A \textbf{stable equivalence} of equivariant bispectra is a map $X \to Y$ inducing an isomorphism $[Y,Z] \to [X,Z]$ for all $\Omega$-spectra $Z$, where $[-,-]$ denotes maps in the level homotopy category.

We construct a model structure with the stable equivalences just as in \cite{mandell_may}. To streamline the exposition we present the minimal list of preliminary results needed to get the model structure, in an order that matches both the way they are proven and the way they are used in the proof of the model structure. Throughout, we define homotopy cofibers $Cf = (I \sma X) \cup_X Y$, homotopy fibers $Ff = X \times_Y F(I,Y)$, smash products $A \sma -$ and homs $F(A,-)$ for $G$-spaces $A$, by applying the usual constructions at each bispectrum level.
\begin{prop}\label{prop:long_list}\hfill
	\begin{enumerate}
		\item\label{item:long_coprod} A coproduct of stable equivalences is a stable equivalence.
		\item \label{item:long_po} A pushout of a stable equivalence is a stable equivalence, provided one of the two legs is a free cofibration.
		\item\label{item:long_count}  A countable composition of maps that are stable equivalences and free cofibrations is again a stable equivalence.
		\item\label{item:long_stable}  If $f$ is a stable equivalence then $Cf$ is stably equivalent to zero.
		\item\label{item:long_unit}  There are natural isomorphisms $\pi_{k+1}^H(\Sigma X) \cong \pi_k^H(X) \cong \pi_{k-1}^H(\Omega X)$. Composing these together, the unit $X \to \Omega\Sigma X$ and counit $\Sigma\Omega X \to X$ are both $\pi_*$-isomorphisms.
		\item\label{item:long_les}  There is a functorial way to assign to each map of bispectra $f\colon X \arr Y$ two long exact sequences
		\[ \xymatrix @R=.3em{
			\dotsc \ar[r] & \pi_k^H(Ff) \ar[r] & \pi_k^H(X) \ar[r] & \pi_k^H(Y) \ar[r] & \pi_{k-1}^H(Ff) \ar[r] & \dotsc \\
			\dotsc \ar[r] & \pi_{k+1}^H(Cf) \ar[r] & \pi_k^H(X) \ar[r] & \pi_k^H(Y) \ar[r] & \pi_k^H(Cf) \ar[r] & \dotsc
		} \]
		\item\label{item:long_pi_star}  There is a natural $\pi_*$-isomorphism $Ff \overset\sim\arr \Omega Cf$.
		\item\label{item:long_omega}  Every level-wise equivalence or $\pi_*$-isomorphism is a stable equivalence.
		\item\label{item:long_level}  For $\Omega$-spectra, every stable equivalence is a level-wise equivalence.
	\end{enumerate}
\end{prop}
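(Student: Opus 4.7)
The plan is to work through the list roughly in the order (5), (6), (7), the level-equivalence half of (8), (1), (3), (2), (4), (9), and finally the $\pi_*$-isomorphism half of (8), so that each item has the tools provided by earlier ones. This matches the shape of the analogous arguments in \cite{mandell_may} for orthogonal $G$-spectra, the only new feature being that the filtered indexing category for $\pi_*^H$ runs over pairs $(n,V)$ in $\Sigma_S \sma \mathscr J_G$ rather than just representations of $G$.

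For the purely homotopy-group statements, I would compute at each bispectrum level and then pass to the colimit. Item (5) follows from cofinality in the indexing system: reindexing the colimit defining $\pi^H_{k+1}(\Sigma X)$ by absorbing the extra suspension into the symmetric coordinate yields $\pi^H_k(X)$, and symmetrically for $\Omega$. For (6), the standard long exact sequences of pointed $H$-spaces for fiber and cofiber sequences hold at each level $(n,V)$, and since filtered colimits of abelian groups preserve exactness, they descend to $\pi^H_*$. For (7), the natural map $Ff \to \Omega Cf$ comes from the space-level construction, and the five-lemma applied to the two long exact sequences from (6), after reindexing via (5), shows it is a $\pi_*$-isomorphism.

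The formal consequences are then routine. The level-equivalence half of (8) is immediate: a level equivalence is invertible in the level homotopy category hence a stable equivalence, and it induces isomorphisms at each term of the defining colimit, hence a $\pi_*$-iso. Item (1) is immediate from $\pi^H_k$ commuting with wedges (coproducts of colimits are colimits). Item (3) uses that free cofibrations are $h$-cofibrations at each level, so $\pi^H_k$ commutes with sequential colimits along them. For (2), in a pushout along a free cofibration the cofibers of the two parallel legs are isomorphic; combining this with (6) converts a stable/$\pi_*$ equivalence on one leg to one on the other (once we know $\pi_*$-iso implies stable equivalence, or directly via maps into $\Omega$-spectra). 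For (4), applying $[-,Z]$ to the Puppe sequence extending $X \to Y \to Cf$ with $Z$ an $\Omega$-spectrum forces $[Cf,Z] \cong [0,Z]$, so $Cf \to 0$ is a stable equivalence. For (9), a stable equivalence $f\colon X \to Y$ between $\Omega$-spectra yields via Yoneda a two-sided inverse $g$ in the level homotopy category, so $f$ is a level homotopy equivalence and in particular a level equivalence.

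The main obstacle is the second half of (8): $\pi_*$-iso implies stable equivalence. The standard route is to construct a functorial fibrant replacement $X \to X^{\mathrm{fib}}$ through a transfinite iteration of the assembly maps $X(n,V) \to \Omega^{m+W} X(n+m, V+W)$, verifying that the colimit is an $\Omega$-spectrum, is $G$-equivariant, and the replacement map is a $\pi_*$-iso. Then a $\pi_*$-iso $f\colon X \to Y$ induces a level equivalence between the fibrant replacements by (7) applied to the mapping cylinder factorization of $f^{\mathrm{fib}}$, hence a stable equivalence; naturality in the fibrant replacement transports this back to $f$. Adapting this construction to the bispectrum index category $\Sigma_S \sma \mathscr J_G$ requires iterating in both the symmetric and orthogonal directions and checking compatibility, but the argument is formally parallel to \cite[Chapters III--IV]{mandell_may}.
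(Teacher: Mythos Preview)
Your treatment of items (5), (6), (7), (4), (9), and (8) is essentially what the paper does (the paper's $Q$ is built by running the telescope construction from \cite[8.8]{mandell_may_shipley_schwede} first in the symmetric direction, then in the orthogonal direction, rather than iterating both at once, but this is a minor packaging difference).

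There is, however, a genuine gap in your handling of (1), (2), (3). Your arguments for these establish that coproducts, pushouts along free cofibrations, and countable compositions preserve \emph{$\pi_*$-isomorphisms}. But the statements concern \emph{stable equivalences}, defined via $[-,Z]$ for $\Omega$-spectra $Z$, and at the point in your ordering where you invoke these arguments you have not shown that every stable equivalence is a $\pi_*$-isomorphism. Item (8) only gives the forward implication $\pi_*$-iso $\Rightarrow$ stable equivalence; the converse is not among the nine items and only becomes available once you combine the fibrant replacement from (8) with (9) and the 2-out-of-3 property. So either you must reorder and first establish the full equivalence of the two notions (which is possible but is itself an extra step you have not written), or you must argue (1)--(3) directly from the definition of stable equivalence.

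The paper takes the second route: it proves (1)--(3) straight from the $[-,Z]$ definition, using that for cofibrant inputs $[-,Z]$ sends coproducts to products, pushouts along free cofibrations to pullbacks, and sequential colimits along free cofibrations to $\lim^1$ short exact sequences. This is cleaner because it avoids any appeal to $\pi_*$ and hence any hidden dependence on (8). Your parenthetical in (2), ``or directly via maps into $\Omega$-spectra,'' is exactly the right fix; you should apply the same fix to (1) and (3).
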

\begin{proof}
	\cref{item:long_coprod,item:long_po,item:long_count} follow because $[-,Z]$ turns coproducts to products, pushouts to pullbacks, and sequential colimits to $\lim^1$ exact sequences.  The given assumptions ensure that if we start with cofibrant spectra then the input to $[-,Z]$ will also always be cofibrant.
\cref{item:long_stable,item:long_level} also follow formally from the definition of stable equivalence.
\cref{item:long_unit,item:long_les,item:long_pi_star} are proven just as they are classically.
For \cref{item:long_omega}, as in \cite[8.8]{mandell_may_shipley_schwede} it suffices to form a right deformation $X \overset{q}\arr QX$ such that $q$ is a level-wise equivalence when $X$ is an $\Omega$-spectrum, and $Q$ sends $\pi_*$-isomorphisms to level-wise equivalences. We construct $Q$ here by doing the construction from \cite[8.8]{mandell_may_shipley_schwede} in the symmetric spectrum direction and observing that it is continuously natural, therefore gives a bispectrum, and then repeating the same construction in the orthogonal spectrum direction, in both steps identifying the level homotopy groups of the result with the colimit of the homotopy groups of the original in one direction or the other, to confirm that it sends $\pi_*$-isomorphisms to level-wise equivalences. It also preserves $\Omega$-spectra because $\Omega$ commutes with mapping telescopes up to equivalence.
\end{proof}

For arbitrary pairs $(i,V)$ and $(j,W)$, let $k_{(i,V),(j,W)}$ refer to the map of bispectra that is the inclusion of the front end of the mapping cylinder for the canonical map
\[ \lambda_{(i,V),(j,W)}\colon F_{(i+j,V+W)} S^{j+W} \arr F_{(i,V)} S^0. \]
\begin{lem}\label{lem:lambda}
		$\lambda_{(i,V),(j,W)}$ is a stable equivalence, as is $A \sma \lambda_{(i,V),(j,W)}$ for any based $G$-CW complex $A$.
\end{lem}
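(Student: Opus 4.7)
The plan is to verify the definition of stable equivalence directly: for every $\Omega$-spectrum $Z$, I will show that the pullback map $(A \sma \lambda_{(i,V),(j,W)})^*$ is a bijection on the sets of maps $[-,Z]$ in the level homotopy category. This reduces, via the free-forgetful adjunction, to the defining equivariant weak equivalence condition of an $\Omega$-spectrum, applied at a single pair of levels.

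First I would note that in the level model structure of \cref{level_model_structure} every bispectrum is already level-fibrant (fibrations are defined as level-wise Serre fibrations, and every map to a point has this property level-wise), so no fibrant replacement is needed. Because the generating cofibrations are of the form $F_{(n,V)} B$ with $B$ a based $G$-CW complex, the evaluation functor $Z \mapsto Z(n,V)$ is right Quillen, and the free-forgetful adjunction descends to a natural bijection
\[ [F_{(n,V)} B, Z] \cong [B, Z(n,V)]_G \]
in the level homotopy category for every based $G$-CW complex $B$, where the right side denotes equivariant pointed homotopy classes of maps.

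Second, I would identify the induced map $(A \sma \lambda_{(i,V),(j,W)})^*$ under this adjunction. Using the natural isomorphisms $A \sma F_{(i,V)} S^0 \cong F_{(i,V)} A$ and $A \sma F_{(i+j,V+W)} S^{j+W} \cong F_{(i+j,V+W)}(A \sma S^{j+W})$, the map
\[ [F_{(i,V)} A, Z] \to [F_{(i+j,V+W)}(A \sma S^{j+W}), Z] \]
becomes
\[ [A, Z(i,V)]_G \to [A \sma S^{j+W}, Z(i+j,V+W)]_G \cong [A, \Omega^{j+W} Z(i+j, V+W)]_G, \]
which is induced by the adjoint structure map $\sigma\colon Z(i,V) \to \Omega^{j+W} Z(i+j, V+W)$. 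This step uses that $\lambda_{(i,V),(j,W)}$ is by construction classified under the free-forgetful adjunction by the canonical inclusion $S^{j+W} \hookrightarrow \Sigma_S(\underline{i},\underline{i+j}) \sma \mathscr J_G(V, V+W)$ corresponding to the standard subset and linear inclusions.

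Finally, the $\Omega$-spectrum condition declares $\sigma$ to be an equivariant weak equivalence, so induces a bijection on $[A, -]_G$ for every based $G$-CW complex $A$. Taking $A = S^0$ gives the first claim and general $A$ the second. The main technical obstacle will be the identification of $\lambda^*$ with the map induced by $\sigma$; this amounts to bookkeeping entirely parallel to the one-variable arguments of \cite[III.3.5]{mandell_may} and \cite[8.10]{mandell_may_shipley_schwede}, so no essentially new difficulty arises.
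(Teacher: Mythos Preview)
Your proof is correct and follows essentially the same approach as the paper: both identify the map $[-,Z]$ induced by $A\sma\lambda_{(i,V),(j,W)}$ with the adjoint structure map of an $\Omega$-spectrum at the relevant bidegrees, and conclude by the defining property of an $\Omega$-spectrum together with the fact that $A$ is a $G$-CW complex. The paper phrases this one step more compactly by passing to the internal hom bispectrum $F(A,Z)$, whose adjoint structure map $F(A,Z)_{(i,V)}\to\Omega^{j+W}F(A,Z)_{(i+j,V+W)}$ is exactly the map you obtain, but the content is identical.
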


\begin{proof}
	Taking maps into $Z$ gives $F(A,Z)_{(i,V)} \arr \Omega^{j+W} F(A,Z)_{(i+j,V+W)}$, which is a stable equivalence for any $\Omega$-spectrum $Z$.
\end{proof}

\begin{prop}\label{stable_model_structure}
	There is a cofibrantly generated stable model structure on $G\Bi$ with the stable equivalences, and generating cofibrations and acyclic cofibrations
\begin{align*}
	I = &\left\{ \ F_{(n,V)}\left[ G/H \times S^{k-1} \rightarrow G/H \times D^k \right]_+ \bigg|  n,k \geq 0,  V \in \mathcal U,  H \leq G  \right\} \\
	J = &\left\{ \ F_{(n,V)}\left[ G/H \times D^k \rightarrow G/H \times D^k \times I \right]_+ \bigg| n,k \geq 0,  V \in \mathcal U,  H \leq G \ \right\} \\
	& \cup  \left\{ {\ k_{(i,V),(j,W)} \ \square \ \left[ G/H \times S^{k-1} \rightarrow G/H \times D^k \right]_+ \bigg| i,j,k \geq 0,  V,W \in \mathcal U,  H \leq G } \right\}.
\end{align*}
	A map $X \arr Y$ is a fibration in this model structure if and only if  it is a level-wise fibration and each square of the following form is a homotopy pullback.
	\begin{equation}\label{eq:stably_fibrant}
	\xymatrix @R=1.5em{
		X_{(m,V)}^H \ar[d] \ar[r] & (\Omega^{n+W} X_{(m+n,V+W)})^H \ar[d] \\
		Y_{(m,V)}^H \ar[r] & (\Omega^{n+W} Y_{(m+n,V+W)})^H
	}
	\end{equation}
\end{prop}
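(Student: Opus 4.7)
The plan is to apply a standard recognition theorem for cofibrantly generated model categories (e.g.\ \cite[2.1.19]{hovey} or Kan's criterion) to $I$ and $J$, taking stable equivalences as the weak equivalences. The hypotheses are: (i) stable equivalences satisfy 2-out-of-3 and are closed under retracts, which is immediate because the definition is via bijections of $[-,Z]$; (ii) domains of $I$ and $J$ are small, which follows because the free bispectrum functors $F_{(n,V)}$ commute with colimits and $G/H \times D^k$ is compact; (iii) $J$-cell complexes are both free cofibrations and stable equivalences; and (iv) a map is $I$-injective if and only if it is $J$-injective and a stable equivalence. Given (i)--(iv), the characterization of fibrations as level fibrations satisfying the homotopy-pullback condition \eqref{eq:stably_fibrant} is essentially the same as identifying $J$-injectivity.

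For (iii), the level-acyclic generators in $J$ come from the level model structure of \cref{level_model_structure} and are stable equivalences by \cref{prop:long_list}.\ref{item:long_omega}. For the new generators $k_{(i,V),(j,W)} \square (G/H \times S^{k-1} \to G/H \times D^k)_+$, we use that $k_{(i,V),(j,W)}$ is the front-end inclusion into the mapping cylinder of $\lambda_{(i,V),(j,W)}$, which is a stable equivalence by \cref{lem:lambda}. The pushout-product with a CW-inclusion is then again a stable equivalence that is also a free cofibration, by an elementary cofiber computation using \cref{lem:lambda} in its smashed form $A \sma \lambda_{(i,V),(j,W)}$. Closure of stable equivalences under transfinite composition of free cofibrations (\cref{prop:long_list}.\ref{item:long_count}) and under pushouts along free cofibrations (\cref{prop:long_list}.\ref{item:long_po}) then implies the conclusion for all $J$-cell complexes.

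The main obstacle is (iv). One direction is formal: $I$-injective maps are the level acyclic fibrations of \cref{level_model_structure}, so they are level equivalences, hence stable equivalences by \cref{prop:long_list}.\ref{item:long_omega}, and they lift against $J$ because the second family in $J$ factors through mapping cylinders of free cofibrations. For the harder direction, a $J$-injective map $f\colon X \to Y$ is a level fibration (from the first part of $J$), and the adjoint of the lifting property against $k_{(i,V),(j,W)} \square (G/H \times S^{k-1} \to G/H \times D^k)_+$ is exactly the homotopy pullback condition \eqref{eq:stably_fibrant}. If in addition $f$ is a stable equivalence, then the homotopy fiber $Ff$ (formed levelwise) is a levelwise Omega-spectrum (by \eqref{eq:stably_fibrant}) and is stably trivial: applying \cref{prop:long_list}.\ref{item:long_les},\ref{item:long_pi_star}, $Ff$ has trivial stable homotopy, and being a levelwise $\Omega$-spectrum, \cref{prop:long_list}.\ref{item:long_level} (applied to the map $\ast \to Ff$) forces $Ff$ to be levelwise equivariantly contractible. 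Chasing the long exact sequences of \cref{prop:long_list}.\ref{item:long_les} yields that $f$ is a level equivalence, hence an acyclic level fibration, i.e.\ $I$-injective.

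With (i)--(iv) in hand, Hovey's recognition theorem produces the cofibrantly generated stable model structure; the characterization of fibrations as in \eqref{eq:stably_fibrant} reads off directly from the description of $J$-injectivity given in the preceding paragraph. The argument is exactly parallel to the stable model structures on orthogonal and orthogonal $G$-spectra constructed in \cite{mandell_may_shipley_schwede, mandell_may}, with the added bookkeeping of having two spectral directions absorbed into the single indexing category $\Sigma_S \sma \mathscr J_G$.
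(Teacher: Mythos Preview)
Your overall strategy matches the paper's proof exactly: verify the four hypotheses of Hovey's recognition theorem and read off the fibration condition from $J$-injectivity. Points (i)--(iii) and the easy direction of (iv) are fine and essentially identical to the paper's treatment.

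There is, however, a genuine gap in the hard direction of (iv). You write that from $f$ being a stable equivalence, \cref{prop:long_list}.\ref{item:long_les} and \ref{item:long_pi_star} give that $Ff$ has trivial stable homotopy. But the long exact sequences of \ref{item:long_les} are in the naive homotopy groups $\pi_*^H$, and to extract $\pi_*^H(Ff)=0$ from them you would need $f$ to be a $\pi_*$-isomorphism. For bispectra this does \emph{not} follow from $f$ being a stable equivalence: because of the symmetric-spectrum direction, there exist bispectra that are stably trivial yet have nonvanishing $\pi_*^H$ (take $\Sigma^\infty$ in the orthogonal direction of a symmetric spectrum that is stably contractible but not $\pi_*$-acyclic). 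The paper is careful only to assert the implication $\pi_*$-iso $\Rightarrow$ stable equivalence (\cref{prop:long_list}.\ref{item:long_omega}), never the converse.

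The fix is to argue entirely in terms of $[-,Z]$ for $\Omega$-spectra $Z$, as the paper (tersely) does. From $f$ a stable equivalence, \cref{prop:long_list}.\ref{item:long_stable} gives $Cf$ stably trivial, i.e.\ $[Cf,Z]=0$. The $\pi_*$-isomorphism $Ff\to\Omega Cf$ of \ref{item:long_pi_star} is a stable equivalence by \ref{item:long_omega}, so $[Ff,Z]\cong[\Omega Cf,Z]$. Using that every $\Omega$-spectrum $Z$ is level-equivalent to $\Omega(\mathrm{sh}\,Z)$ for an appropriate shift, together with the counit equivalence $\Sigma\Omega Cf\to Cf$ from \ref{item:long_unit}, one gets $[\Omega Cf,Z]\cong[Cf,\mathrm{sh}\,Z]=0$. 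Hence $Ff$ is stably trivial, and now \ref{item:long_level} applies to the map $Ff\to *$ between $\Omega$-spectra to give level-contractibility. The remainder of your argument then goes through.
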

Here $\square$ refers to the pushout-product, constructed from the operation that smashes a bispectrum with a space to produce another bispectrum.

\begin{proof}
	As in the proof of \cref{level_model_structure} it suffices to check the following four points.
	\begin{enumerate}
		\item The weak equivalences are closed under 2-out-of-3 and retracts, because they are defined from a notion of homotopy group.
		\item A map from a domain in $I$ into an $I$-cell complex factors through some finite stage, and similarly for $J$. This is straightforward since the domains are finite unions of free spectra on compact spaces.
		\item A $J$-cell complex is both a weak equivalence and an $I$-cofibration. For the $I$-cofibration part we use standard properties of pushout-products to show the new maps in $J$ are also $I$-cell complexes. Using the facts about coproducts, pushouts, and compositions of stable equivalences from \cref{prop:long_list}, the other part boils down to the fact that the smash product of $k_{(i,V),(j,W)}$ and a finite $G$-CW complex such as $(G/H \times S^{k-1})_+$ is a stable equivalence, which is \cref{lem:lambda}.
		\item A map is $I$-injective if and only if it is both a stable equivalence and $J$-injective. We already know that $I$-injective is equivalent to being a levelwise acyclic Serre fibration. $J$-injective rearranges to every level being a Serre fibration, and in addition the pullback-hom from $k_{(m,V),(n,W)}$ is a weak equivalence of based spaces. This latter map is then equivalent to
		\[ X(m,V) \arr \Omega^{n+W} X(m+n,V+W) \times_{\Omega^{n+W} Y(m+n,V+W)} Y(m,V). \]
		So it is an equivalence precisely when the square \eqref{eq:stably_fibrant} is a homotopy pullback, assuming at all times that $X \arr Y$ is a level-wise fibration, see also \cite[4.8]{mandell_may}.

		So if $f\colon X \arr Y$ is $I$-injective, then it is a level-wise equivalence, implying both that it is a stable equivalence and that \eqref{eq:stably_fibrant} is a homotopy pullback. Since it is also a level-wise fibration, this implies it is $J$-injective. Going the other way, if $f$ is $J$-injective and a stable equivalence, then the squares \eqref{eq:stably_fibrant} are homotopy pullbacks, so the homotopy fiber $Ff$ is an $\Omega$-spectrum. Since $f$ is a stable equivalence, combining several of the points from \cref{prop:long_list}, we conclude that $Ff$ is level equivalent to zero, hence $f$ is a level-wise equivalence. We also know it is a level-wise fibration, and these together imply $f$ is $I$-injective.
	\end{enumerate}
\end{proof}

Next we relate equivariant bispectra back to simpler kinds of spectra. Recall that a \textbf{naive symmetric $G$-spectrum} is a symmetric spectrum with $G$-action, where an equivalence of such is a map $X \to Y$ for which the induced map of fixed point spectra $X^H \to Y^H$ is an equivalence for every $H \leq G$. These have a model structure cofibrantly generated by the maps
\begin{align*}
I =& \left\{ \ F_{n}\left[ G/H \times S^{k-1} \rightarrow G/H \times D^k \right]_+  \bigg| n,k \geq 0,  H \leq G \ \right\} \\
J =& \left\{ \ F_{n}\left[ G/H \times D^k \rightarrow G/H \times D^k \times I \right]_+ \bigg| n,k \geq 0,  H \leq G  \right\} \\
& \cup  \left\{ {\ k_{i,j} \ \square \ \left[ G/H \times S^{k-1} \rightarrow G/H \times D^k \right]_+ \bigg| i,j,k \geq 0,  H \leq G } \right\}.
\end{align*}
By contrast, a \textbf{genuine orthogonal $G$-spectrum} is an orthogonal $G$-spectrum in the model structure from \cite{mandell_may}, where a map is an equivalence if it induces isomorphisms on the homotopy groups
\[ \pi_k^H(X) = \begin{cases}\quad
\underset{V \subset \mathcal U}\colim\, \pi_k\left(\left[\Omega^V X(V)\right]^H\right)  &\text{if } k \geq 0 \\
\underset{V \subset \mathcal U,\ \bbR^{|k|} \subset V}\colim\, \pi_0\left(\left[\Omega^{V-\bbR^{|k|}} X(V)\right]^H\right)  &\text{if } k < 0.\end{cases} \]
Now consider the following three adjunctions.
\begin{enumerate}
	\item For each naive symmetric spectrum $X$ we create an equivariant bispectrum $\Sigma^\infty X$ by defining
	\[ (\Sigma^\infty X)_{n,V} = \Sigma^V X_n. \]
	The right adjoint of the functor $\Sigma^{\infty}$ restricts a bispectrum to the levels $(n,0)$ for all $n$ to produce a symmetric spectrum.
	\item From each genuine orthogonal $G$-spectrum $X$, we create an equivariant bispectrum $\Sigma^\infty X$ by the rule
	\[ (\Sigma^\infty X)_{n,V} = \Sigma^n X_V. \]
	The right adjoint of the functor $\Sigma^{\infty}$ restricts a bispectrum to the levels $(0,V)$ to produce an orthogonal spectrum.
	\item Choose an isomorphism $\bbR^\infty \oplus \mathcal U \cong \mathcal U$, and define a functor $\Sigma_S \sma \mathscr J_G \arr \mathscr J_G$ that sends $(n,V)$ to the image of $\bbR^n \oplus V$ under this isomorphism. The restriction and left Kan extension along this functor give an adjunction between equivariant bispectra and orthogonal $G$-spectra. We call the left adjoint \textbf{prolongation} $\bbP(-)$ and the right adjoint the \textbf{shift bispectrum} $sh(-)$, following \cite{dmpsw}. Concretely, $\bbP X$ is the coequalizer
\begin{small}
	\[ \bigvee_{(m,V),(n,W)} \mathscr J_G(\bbR^n \oplus W,-) \sma \Sigma_S(m,n) \sma \mathscr J_G(V,W) \sma X_{(m,V)}
	\rightrightarrows \bigvee_{(m,V)} \mathscr J_G(\bbR^m \oplus V,-) \sma X_{(m,V)}
	\rightarrow \bbP X \]
\end{small}
	and $sh(Y)_{(n,V)} = Y_{\bbR^n \oplus V}$. Up to isomorphism, these functors do not depend on the original choice of isomorphism $\bbR^\infty \oplus \mathcal U \cong \mathcal U$.
\end{enumerate}

\begin{prop}\label{quillen-adjoints}
	The above three adjunctions are Quillen adjunctions, and the last two are Quillen equivalences.
\end{prop}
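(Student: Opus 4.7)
My plan is to verify the Quillen adjunction property uniformly for all three using the generating set description, and then to treat the two Quillen equivalence claims separately using the $\pi_\ast$-isomorphism criterion from the preceding propositions.

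First, to check the Quillen adjunction property, I would verify that each left adjoint preserves generating (acyclic) cofibrations. Each left adjoint is a left Kan extension along a functor of indexing categories, so it sends free spectra to free spectra: $\Sigma^{\infty} F_n A = F_{(n,0)} A$ for the first, $\Sigma^{\infty} F_V A = F_{(0,V)} A$ for the second, and $\bbP F_{(n,V)} A = F_{\bbR^n \oplus V} A$ for the third (using the explicit coequalizer formula). Hence the generating cofibrations (which all have the form of free spectra smashed with $(G/H \times D^k, G/H \times S^{k-1})_+$) are preserved. For the generating acyclic cofibrations of pushout-product type involving $k_{(i,V),(j,W)}$, each left adjoint sends this to a $k$-map in the target indexing, which is an acyclic cofibration by the analog of \cref{lem:lambda} in that setting --- the same proof applies verbatim, since the relevant $\lambda$-maps translate to $\lambda$-maps in the target indexing.

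For the second adjunction, the unit $X \to (\Sigma^{\infty} X)|_{n=0} = X$ is the identity on every orthogonal $G$-spectrum, hence always an equivalence. For the derived counit, let $Y$ be a stably fibrant bispectrum. The counit $\Sigma^{\infty}(Y|_{n=0}) \to Y$ at level $(n,V)$ is the structural map $\Sigma^n Y_{0,V} \to Y_{n,V}$, adjoint to $Y_{0,V} \to \Omega^n Y_{n,V}$. By the characterization of fibrant bispectra in \cref{stable_model_structure} (apply to the fibration $Y \to \ast$), this adjoint is an equivariant weak equivalence, so the counit is a $\pi_\ast$-isomorphism, hence a stable equivalence by \cref{prop:long_list}(\ref{item:long_omega}).

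For the third adjunction, my approach is to show that the shift functor preserves and reflects $\pi_\ast$-isomorphisms by the cofinality computation
\[
\pi_k^H(\mathrm{sh}\, X) = \underset{(n,V)}{\mathrm{colim}}\ \pi_0\bigl([\Omega^{n+k+V} X_{\bbR^n \oplus V}]^H\bigr) \ =\ \underset{W \subset \mathcal U}{\mathrm{colim}}\ \pi_0\bigl([\Omega^{k+W} X_W]^H\bigr) \ = \ \pi_k^H(X),
\]
using that $\{\bbR^n \oplus V\}$ is cofinal in the poset of finite-dimensional subspaces of $\bbR^\infty \oplus \mathcal U \cong \mathcal U$. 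Since $\mathrm{sh}$ also preserves $\Omega$-spectra and hence fibrant objects, this immediately gives that $\mathrm{sh}$ both preserves and reflects stable equivalences between fibrant objects. The Quillen equivalence then reduces to showing the derived unit $Y \to \mathrm{sh}(\bbP Y)^{\mathrm{fib}}$ is a stable equivalence for cofibrant bispectra $Y$. I would verify this by a cell induction (using that both $\bbP$ and $\mathrm{sh}$ commute with sequential colimits along cofibrations and with pushouts along free cofibrations), reducing to the case $Y = F_{(m,W)} A$ where $\bbP Y = F_{\bbR^m \oplus W} A$ and the comparison can be read off the $\pi_\ast$-computation above.

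The main obstacle is controlling the induction in the last step: the unit $F_{(m,W)} A \to \mathrm{sh}\, F_{\bbR^m \oplus W} A$ is far from a level-wise equivalence --- at level $(n,V)$ it is the inclusion of $\Sigma_S(m,n) \sma \mathscr J_G(W,V) \sma A$ into the much larger Thom space $\mathscr J_G(\bbR^m \oplus W, \bbR^n \oplus V) \sma A$. One must package the comparison globally, showing that after stabilization over all $(n,V)$ the inclusion becomes an equivalence of equivariant stable homotopy groups for every $H \leq G$. This uses the cofinality argument above applied pointwise, together with the interaction between the symmetric and orthogonal directions governed by the prismatic-style decomposition of $\Sigma_S(m,n) \sma \mathscr J_G(W,V)$ inside $\mathscr J_G(\bbR^m \oplus W, \bbR^n \oplus V)$.
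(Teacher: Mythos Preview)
Your treatment of the Quillen adjunction property and of the second adjunction ($\Sigma^\infty$ from orthogonal $G$-spectra) is correct and is essentially the paper's argument; the paper phrases the counit step by identifying the map of orthogonal spectra $\Sigma^n X(0,-) \to X(n,-)$ with the counit $\Sigma^n\Omega^n X(n,-) \to X(n,-)$ at each symmetric level and invoking \cref{prop:long_list}(\ref{item:long_unit}), which is the precise version of what you call ``the adjoint is an equivariant weak equivalence.''

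The genuine gap is in your treatment of the third adjunction. Your cell induction reduces to showing that the unit $F_{(m,W)}A \to \mathrm{sh}\,F_{\bbR^m\oplus W}A$ is a $\pi_*$-isomorphism of bispectra, and this is \emph{false} whenever $m \geq 1$. The homotopy groups defined before \cref{prop:long_list} use only the standard inclusions $\underline{n}\hookrightarrow\underline{n+1}$ in the symmetric direction, so free bispectra inherit the well-known pathology of free symmetric spectra: for instance $\pi_k^H F_{(1,0)}S^0$ is a countably infinite direct sum of copies of the equivariant stable stem $\pi_{k+1}^{H,s}$ rather than a single copy, because the $n$ wedge summands in $\Sigma_S(1,n)=\bigvee_{i=1}^n S^{n-1}$ are never collapsed in the colimit. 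Meanwhile $\mathrm{sh}\,F_{\bbR}S^0$ has the correct groups by your own cofinality calculation. The unit on such generators is therefore a stable equivalence that is \emph{not} a $\pi_*$-isomorphism, and no ``prismatic-style decomposition'' of $\Sigma_S(m,n)\sma\mathscr J_G(W,V)$ inside $\mathscr J_G(\bbR^m\oplus W,\bbR^n\oplus V)$ will manufacture one. The paper sidesteps this entirely: since the composite of left adjoints $\bbP\circ\Sigma^\infty$ is literally the identity on orthogonal $G$-spectra, once the second adjunction is shown to induce an equivalence of homotopy categories the third follows by two-out-of-three, with no direct analysis of $\mathrm{sh}$ or of the unit on cells required.
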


\begin{proof}
	It is straightforward to check that all three left adjoints preserve generating cofibrations and acyclic cofibrations. To verify that the last two are Quillen equivalences, we observe that they compose to give the identity functor on orthogonal $G$-spectra, so it suffices to show that the first one ($\Sigma^\infty$) gives an equivalence of homotopy categories. It has $\bbP$ as its left inverse, so it is faithful. To show it is full and essentially surjective, it suffices to construct a right inverse. To do this we take an $\Omega$-bispectrum $X$ and check that the natural map
	\begin{equation}\label{eq:equivalence_proof}
		\Sigma^\infty \textup{res} X \to X
	\end{equation}
	from the suspension spectrum of its restriction is a $\pi_*$-isomorphism. To prove this it suffices to show that the map of colimit systems defining $\pi_k^H$ is bijective on the colimits when restricted to any one symmetric spectrum level $n$, where it arises from a map of orthogonal spectra of the form
	\[ \Sigma^n X(0,-) \arr X(n,-). \]
	Since $X$ is an $\Omega$-spectrum, we can identify this with the counit map
	\[ \Sigma^n\Omega^n X(n,-) \arr X(n,-) \]
	which is always a $\pi_*$-isomorphism of orthogonal spectra as in \cref{prop:long_list}(\ref{item:long_unit}).
\end{proof}

Let $H \leq G$ be any subgroup. For simplicity we assume that $G$ is abelian so that its Weyl group is just $G/H$. Let $\mathscr J_G^H$ denote the category obtained from $\mathscr J_G$ by taking $H$-fixed points of all the mapping spaces. Let
\[ L\colon \mathscr J_G^H \arr \mathscr J_{G/H} \]
be the map of topological categories taking $V$ to $V^H$, acting on morphisms by
\[ i\colon V \arr W, \ w \in (W-i(V))^H \quad \leadsto \quad i^H\colon V^H \arr W^H, \ w \in W^H - i(V^H). \]
For each $G$-equivariant bispectrum $X$ the spaces $X(n,V)^H$ form a diagram on $\mathscr J_G^H$, that we call $\textup{Fix}^H X$.
\begin{defn}\label{gfp_bispectra}
	The geometric fixed point bispectrum $\Phi^H X$ is the left Kan extension of $\textup{Fix}^H X$ along the functor
\[ \xymatrix{ \Sigma_S \sma \mathscr J_G^H \ar[r]^-{\id \sma L} & \Sigma_S \sma \mathscr J_{G/H}. } \]
Equivalently, it is the functor $\Phi^H$ on orthogonal spectra, applied to the orthogonal spectrum $X_n$ at every symmetric spectrum level $n \geq 0$, which then assemble together into a bispectrum.
\end{defn}
The following has the same proof as in \cite{mandell_may}.
\begin{lem}\label{lem:gfp_properties}
	The functor $\Phi^H(-)$ is not a left adjoint, but preserves coproducts, sends pushouts along levelwise closed inclusions to pushouts, and sends sequential colimits along levelwise closed inclusions to colimits. Furthermore there are natural isomorphisms of $G/H$-spectra
	\[ \Phi^H F_{(n,V)} A \cong F_{(n,V^H)} A^H. \]
\end{lem}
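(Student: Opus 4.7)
The plan is to exploit the characterization of $\Phi^H$ given in \cref{gfp_bispectra}: at each symmetric spectrum level $n \geq 0$ the functor applies the orthogonal-spectrum geometric fixed point functor, and the results assemble into a bispectrum. Under this description every structural claim reduces to the corresponding assertion for orthogonal $G$-spectra, which is already proved in \cite{mandell_may}, together with one bookkeeping observation about how colimits in $G\Bi$ are computed.

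The bookkeeping observation is that coproducts, pushouts along levelwise closed inclusions, and sequential colimits along levelwise closed inclusions in $G\Bi$ are computed at each symmetric level as the corresponding colimit of orthogonal $G$-spectra; these in turn are computed space-wise. The orthogonal-spectrum version of $\Phi^H$ preserves these three kinds of colimits because the space-wise $H$-fixed point functor preserves wedge sums outright, and preserves pushouts and sequential colimits along closed inclusions by the standard quotient-and-compact-support arguments; left Kan extension along $\id \sma L$ then preserves all colimits as a left adjoint. Reassembling over the symmetric direction $n$ yields the first three conclusions of the lemma.

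For the free bispectrum formula, I would decompose
\[
F_{(n,V)} A \;\cong\; \Sigma_S(\underline{n},-) \sma F_V A,
\]
where $F_V A$ is the free orthogonal $G$-spectrum on $A$ at $V$ and $\Sigma_S(\underline{n},-)$ carries the trivial $G$-action. Since $(-)^H$ commutes with smashing with a pointed space carrying trivial $G$-action, and since left Kan extension along $\id \sma L$ leaves the symmetric direction unchanged, the calculation collapses to the orthogonal-spectrum identity $\Phi^H F_V A \cong F_{V^H} A^H$ from \cite{mandell_may}. That identity in turn follows from a direct Yoneda computation: maps out of $\Phi^H F_V A$ into a $G/H$-spectrum $Y$ correspond under the $(\operatorname{Lan}_L, L^\ast)$ adjunction to maps of $\mathscr J_G^H$-diagrams $\Fix^H F_V A \to L^\ast Y$, which are in turn based $G/H$-maps $A^H \to Y(V^H)$.

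Finally, $\Phi^H$ fails to be a left adjoint because a left adjoint would preserve all coequalizers; the space-wise $H$-fixed point functor does not preserve general coequalizers (it is in fact a right adjoint, not a left adjoint, to the evident inclusion from $G/H$-spaces with trivial $H$-action), and this failure is transported to $\Phi^H$ by applying the free bispectrum functor $F_{(n,V)}(-)$ to any $G$-space coequalizer on which $(-)^H$ fails. The main step that requires genuine work is the pushout preservation for $(-)^H$ along closed inclusions of $G$-spaces, and this is the expected obstacle; it is handled by the standard description of such a pushout as a quotient by a closed equivalence relation, for which the passage to $H$-fixed points commutes with forming the quotient.
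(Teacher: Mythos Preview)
Your proposal is correct and follows the same route as the paper, which simply records that the argument is identical to the one in \cite{mandell_may}. You have just unpacked that deferral: reduce along the description in \cref{gfp_bispectra} to the orthogonal-spectrum $\Phi^H$ at each symmetric level, invoke the Mandell--May results there, and reassemble; the free-spectrum identity likewise reduces to the orthogonal one via the splitting $F_{(n,V)}A \cong \Sigma_S(\underline{n},-)\sma F_V A$.
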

\begin{cor}
	$\Phi^H$ preserves cofibrations and acyclic cofibrations.
\end{cor}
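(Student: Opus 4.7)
The plan is to exploit the fact that the stable model structure on $G\Bi$ from \cref{stable_model_structure} is cofibrantly generated, so every cofibration is a retract of a transfinite composition of pushouts of coproducts of maps in $I$ (and similarly for acyclic cofibrations using $J$). Since $\Phi^H$ is a functor it automatically preserves retracts, and by \cref{lem:gfp_properties} it preserves coproducts, pushouts along levelwise closed inclusions, and sequential colimits along levelwise closed inclusions. Every map in $I$ and $J$ is a levelwise closed inclusion (each is built from free bispectra on relative CW inclusions of based $G$-spaces, possibly after pushout-product with such a map), and being a closed inclusion is preserved under the operations that build cell complexes. So the proof reduces to checking that $\Phi^H$ sends each generator in $I$ to a cofibration and each generator in $J$ to an acyclic cofibration in the model structure on $(G/H)\Bi$.

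For the generators in $I$, I would use the isomorphism $\Phi^H F_{(n,V)} A \cong F_{(n,V^H)} A^H$ from \cref{lem:gfp_properties}, together with the observation that since $G$ is abelian we have $(G/K)^H = G/K$ whenever $H \leq K$ and $(G/K)^H = \emptyset$ otherwise, so that $\Phi^H$ applied to $F_{(n,V)}[G/K \times S^{k-1} \to G/K \times D^k]_+$ is either a map in the generating set $I$ for $(G/H)\Bi$ or the identity on the zero object, both of which are cofibrations. The first family of generators in $J$ is handled identically.

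The main wrinkle is the second family of generators in $J$, namely the pushout-products $k_{(i,V),(j,W)} \square g$. For these I would first observe that since smashing a bispectrum with a based $G$-space commutes with $\Phi^H$ up to taking $H$-fixed points on the space (a direct consequence of the formula $\Phi^H F_{(n,V)} A \cong F_{(n,V^H)} A^H$, extended by colimits), and since $\Phi^H$ preserves the pushouts used to build the pushout-product (these are all along levelwise closed inclusions), we have
\[ \Phi^H\bigl(k_{(i,V),(j,W)} \square g\bigr) \cong \Phi^H k_{(i,V),(j,W)} \, \square \, g^H. \]
Then $\Phi^H k_{(i,V),(j,W)}$ is the mapping cylinder inclusion for $\Phi^H \lambda_{(i,V),(j,W)}$, and applying \cref{lem:gfp_properties} to both free spectra in $\lambda$ identifies this with $k_{(i,V^H),(j,W^H)}$ in $(G/H)\Bi$. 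Combined with the computation of $g^H$ from the previous paragraph, we land on exactly a map in $J$ for $(G/H)\Bi$.

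The step I expect to require the most care is verifying that $\Phi^H$ commutes with the pushout-product construction on the nose up to the isomorphisms provided by \cref{lem:gfp_properties}. This reduces to checking that the pushout defining the source of a pushout-product is a pushout along a levelwise closed inclusion (so that \cref{lem:gfp_properties} applies), and that smashing a bispectrum with an unbased $G$-CW pair is compatible with $\Phi^H$ levelwise. Both checks are unpacking definitions, but they are where one must be honest about the level-by-level structure of $\Phi^H$ promised in the second clause of \cref{lem:gfp_properties}.
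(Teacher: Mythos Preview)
Your proposal is correct and is exactly the argument the paper has in mind: the corollary is stated without proof, as an immediate consequence of \cref{lem:gfp_properties} and cofibrant generation. Your elaboration---reducing to generators via retracts of cell complexes, using the isomorphism $\Phi^H F_{(n,V)}A \cong F_{(n,V^H)}A^H$ on the generators, and checking that $\Phi^H$ commutes with the pushout-product for the second family in $J$---is the standard unpacking of this, and the care you flag about the pushout-product step is appropriate but routine.
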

Therefore $\Phi^H$ preserves weak equivalences between cofibrant objects, so it has a well-defined left-derived functor by applying it to cofibrant inputs.
\begin{prop}\label{gfp_commutes}
	For naive symmetric $G$-spectra $X$ there is a natural isomorphism of $G/H$-bispectra
	\[ \Phi^H \Sigma^\infty X \cong \Sigma^\infty X^H. \]
	For orthogonal $G$-spectra $X$ there is a natural isomorphism of $G/H$-bispectra
	\[ \Phi^H \Sigma^\infty X \cong \Sigma^\infty \Phi^H X. \]
	For equivariant bispectra $X$ there is a natural map of orthogonal $G/H$-spectra
	\[ \bbP \Phi^H X \arr \Phi^H \bbP X \]
	that is an isomorphism when $X$ is cofibrant.
\end{prop}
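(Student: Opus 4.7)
For parts (1) and (2), the strategy is to unwind the definition of $\Sigma^{\infty}$ and apply the classical identifications $\Phi^H(\Sigma^V A) \cong \Sigma^{V^H} A^H$ for based $G$-spaces $A$ and $\Phi^H(\Sigma^n Y) \cong \Sigma^n \Phi^H Y$ for suspension by a trivial representation. Concretely, for (1) the bispectrum $\Sigma^{\infty} X$ has $(\Sigma^{\infty} X)_{n,V} = \Sigma^V X_n$, so at each symmetric level $n$ its underlying orthogonal $G$-spectrum is the suspension spectrum of the based $G$-space $X_n$; applying $\Phi^H$ level-by-level in the orthogonal direction, per \cref{gfp_bispectra}, produces $\Sigma^{\infty} (X_n)^H$ at level $n$, and these reassemble into the bispectrum $\Sigma^{\infty} X^H$. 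For (2) the situation is dual: $(\Sigma^{\infty} X)_{n,V} = \Sigma^n X_V$, so at symmetric level $n$ we see $\Sigma^n X$ as an orthogonal $G$-spectrum with trivial $G$-action on the $\Sigma^n$ factor, and $\Phi^H(\Sigma^n X) \cong \Sigma^n \Phi^H X$ assembles into the suspension bispectrum of $\Phi^H X$.

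For part (3), I would first construct the natural map $\bbP \Phi^H X \to \Phi^H \bbP X$ from the strict identity $(\bbR^n \oplus V)^H = \bbR^n \oplus V^H$: the coequalizer formula defining $\bbP$ applied to $\Phi^H X$ maps canonically into $\Phi^H \bbP X$, using that each $\mathscr{J}_{G/H}(\bbR^n \oplus V^H, -)$ appearing in the first agrees with the value at $V^H$ of the image under $L$ of $\mathscr{J}_G(\bbR^n \oplus V, -)$. To show this comparison is an isomorphism for cofibrant $X$, I would reduce to generators. The functor $\bbP$ is a left adjoint and hence preserves all colimits, while by \cref{lem:gfp_properties} the functor $\Phi^H$ preserves coproducts, pushouts along levelwise closed inclusions, and sequential colimits along such. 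Since every cofibrant bispectrum is built from the generating free bispectra $F_{(n,V)} A$ via these operations, it suffices to verify the comparison on $X = F_{(n,V)} A$.

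On such a free bispectrum, the $(\bbP, sh)$ adjunction yields $\bbP F_{(n,V)} A \cong F_{\bbR^n \oplus V} A$ in $G$-orthogonal spectra, and the standard formula for $\Phi^H$ on a free orthogonal spectrum then gives $\Phi^H \bbP F_{(n,V)} A \cong F_{\bbR^n \oplus V^H} A^H$. On the other side, \cref{lem:gfp_properties} identifies $\Phi^H F_{(n,V)} A \cong F_{(n, V^H)} A^H$, and the same adjunction then gives $\bbP \Phi^H F_{(n,V)} A \cong F_{\bbR^n \oplus V^H} A^H$. The main obstacle will be verifying, under these identifications, that the natural transformation specializes to the identity on free bispectra rather than to some other automorphism; this is structurally clear from the naturality of the Kan extensions involved, but requires careful bookkeeping of the universal maps. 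The colimit-preservation reductions are otherwise routine given \cref{lem:gfp_properties}.
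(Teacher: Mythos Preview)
Your proposal is correct. Parts (1) and (2) match the paper exactly: the paper says the first follows just as in the free-spectrum isomorphism of \cref{lem:gfp_properties}, and the second ``follows easily since geometric fixed points are being applied at each orthogonal spectrum level separately,'' which is precisely your level-by-level argument.

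For part (3) your approach and the paper's diverge slightly in organization, though not in substance. You construct the comparison map directly from the coequalizer presentation of $\bbP$ and then reduce to free generators using the colimit-preservation properties of $\Phi^H$ and $\bbP$. The paper instead factors the problem through an intermediate stage: it introduces $\bbP^H$, the left Kan extension along the $H$-fixed points of the direct-sum functor $\Sigma_S \sma \mathscr J_G^H \to \mathscr J_G^H$, and observes that there is a canonical map of $\mathscr J_G^H$-diagrams $\bbP^H \Fix^H X \to \Fix^H \bbP X$, which is an isomorphism for cofibrant $X$. Applying the Kan extension along $L$ to both sides and commuting the two Kan extensions on the left (which is legitimate because $(\bbR^n \oplus V)^H = \bbR^n \oplus V^H$) then produces the desired comparison. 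The payoff of the paper's route is that the natural map comes for free from composition of Kan extensions, so the bookkeeping concern you flag about whether the comparison ``specializes to the identity'' dissolves automatically; the payoff of your route is that it is more explicit and avoids introducing the auxiliary functor $\bbP^H$. Both arguments ultimately rest on the same reduction to free bispectra, so they are essentially interchangeable.
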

\begin{proof}
	The first point follows just as in the isomorphism from \cref{lem:gfp_properties}, and the second point follows easily since geometric fixed points are being applied at each orthogonal spectrum level separately. For the last point we observe there is a canonical map of $\mathscr J_G^H$-diagrams
	\[ \bbP^H \textup{Fix}^H X \arr \textup{Fix}^H \bbP X \]
	that is an isomorphism when $X$ is cofibrant, the first term $\bbP^H$ denoting left Kan extension along the $H$-fixed points of the direct sum map $\Sigma_S \sma \mathscr J_G^H \arr \mathscr J_G^H$. Then we apply left Kan extension along $L$ to both sides, and commute the left Kan extensions on the left-hand side to get the desired natural map.
\end{proof}

The argument from \cite[\S 3]{malkiewich_thh_dx} also establishes the following rigidity lemma. Together with rigidity for orthogonal spectra, this proves that the above isomorphisms are unique and therefore we encounter no coherence issues when commuting them past each other.
\begin{lem}\label{lem:rigidity}
	Any functor from cofibrant $G$-bispectra to orthogonal spectra that is isomorphic to $\Phi^G \bbP X$ is uniquely isomorphic to $\Phi^G \bbP X$. The same applies to $\Phi^G X$ as a functor from cofibrant $G$-bispectra to bispectra.
      \end{lem}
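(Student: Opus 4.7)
Both assertions amount to saying that the functor $F$ in question (either $\Phi^G\bbP$ or $\Phi^G$) has no nontrivial natural self-isomorphism on the category of cofibrant $G$-bispectra: any two isomorphisms $G \xrightarrow{\sim} F$ differ by an automorphism of $F$, so uniqueness of the isomorphism is equivalent to triviality of $\mathrm{Aut}(F)$. My plan is a three-step Yoneda-style rigidity argument, following the template used in \cite[\S 3]{malkiewich_thh_dx}.

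The first step is to reduce to free bispectra. Every cofibrant $G$-bispectrum is a retract of a cell complex built from cells $F_{(n,V)}(G/H \times D^k)_+$ as in \cref{level_model_structure}. By \cref{lem:gfp_properties}, the functor $\Phi^G$ preserves coproducts, pushouts along levelwise closed inclusions, and sequential colimits along levelwise closed inclusions; the left adjoint $\bbP$ preserves all colimits. Consequently a natural automorphism $\eta$ of $F$ is determined by its restriction to the full subcategory of free cofibrant bispectra $F_{(n,V)}A$, with $A$ a based $G$-CW complex.

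The second step identifies $F$ on these generators. The isomorphism $\bbP F_{(n,V)}A \cong F_{\bbR^n \oplus V} A$ (immediate from the definition of $\bbP$ as a left Kan extension along $(n,V)\mapsto \bbR^n \oplus V$) combined with \cref{lem:gfp_properties} and \cref{gfp_commutes} yields a natural isomorphism $\Phi^G \bbP F_{(n,V)} A \cong F_{\bbR^n \oplus V^G} A^G$ of orthogonal spectra, and \cref{lem:gfp_properties} alone gives $\Phi^G F_{(n,V)} A \cong F_{(n,V^G)} A^G$ as a bispectrum. Under either of these identifications, $\eta$ restricts to a natural automorphism of a functor of $(n,V,A)$ taking values in free orthogonal (bi)spectra.

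The third step is to conclude this natural automorphism is the identity. This is the main obstacle, and is where we invoke the rigidity statement for orthogonal spectra proved in \cite[\S 3]{malkiewich_thh_dx}: naturality in $A$ with respect to all maps $S^0 \to A$ forces $\eta$ to be determined by its value at $A = S^0$; naturality in the $V$-direction (compatibility with the structure maps of free orthogonal spectra, i.e.\ with the morphisms of $\mathscr J_G$) further pins down the value at $F_{(0,0)}S^0 = \bbS$ to an element of $\mathrm{End}(\mathrm{id}_{\bbS})$; and naturality in the $n$-direction plus the requirement that $\eta$ is an isomorphism forces this element to be the identity. Once the value of $\eta$ on $\bbS$ is shown to be the identity, the naturality squares used to establish this propagation back up to all of $F_{(n,V)}A$ force $\eta$ to be the identity on every free cofibrant bispectrum, and by Step 1 on every cofibrant bispectrum.
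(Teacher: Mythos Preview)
Your proposal is correct and follows the same approach as the paper, which simply asserts that the argument from \cite[\S 3]{malkiewich_thh_dx} carries over; you have spelled out that argument in the bispectrum setting. One small tightening: in Step~3, once you have reduced to the component of $\eta$ at $\bbS = F_{(0,0)}S^0$, no further naturality is needed---point-set endomorphisms of $\bbS$ are maps $S^0 \to S^0$, so the only automorphism is the identity.
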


\bibliography{references}
\bibliographystyle{amsalpha2}

\end{document}